\newlength\fheight \newlength\fwidth
\newcommand{\R}{\mathbb{R}}
\newcommand{\N}{\mathbb{N}}
\renewcommand{\P}[2]{\mathbb{P}_{#1}\left[#2\right]}
\newcommand{\E}[2]{\mathbb{E}_{#1}\left[#2\right]}
\newcommand{\Cov}{\operatorname{Cov}}
\newcommand{\V}[1]{\mathbb{V}\left[#1\right]}
\newcommand{\oP}{o_{\mathbb P}}
\newcommand{\OP}{O_{\mathbb P}}
\newcommand{\toD}{\stackrel{\mathcal D}{\rightarrow}}
\newcommand{\p}{\mathrm{pen}}
\newcommand{\pen}[2]{\p_{#1}\left(#2\right)}
\newtheorem{theorem}{Theorem}[section]
\newtheorem{lem}[theorem]{Lemma}
\newtheorem{cor}[theorem]{Corollary}
\newtheorem{remark}[theorem]{Remark}
\newtheorem{example}[theorem]{Example}
\newtheorem{assumption}{Assumption}
\begin{document}

\usetikzlibrary{patterns}

\begin{center}
\begin{minipage}{.8\textwidth}
\centering 
\LARGE Multidimensional multiscale scanning in Exponential Families: Limit theory and statistical consequences\\[0.5cm]

\normalsize
\textsc{Claudia K\"onig}\\[0.1cm]
\verb+claudia.koenig@stud.uni-goettingen.de+\\
Institute for Mathematical Stochastics, University of G\"ottingen\\[0.1cm]

\textsc{Axel Munk}\\[0.1cm]
\verb+munk@math.uni-goettingen.de+\\
Institute for Mathematical Stochastics, University of G\"ottingen\\
and\\
Felix Bernstein Institute for Mathematical Statistics in the Bioscience, University of G\"ottingen\\
and\\
Max Planck Institute for Biophysical Chemistry, G\"ottingen, Germany\\[0.1cm]

\textsc{Frank Werner}\footnotemark[1]\\[0.1cm]
\verb+frank.werner@mpibpc.mpg.de+\\
Institute for Mathematical Stochastics, University of G\"ottingen\\
and\\
Max Planck Institute for Biophysical Chemistry, G\"ottingen, Germany
\end{minipage}
\end{center}

\footnotetext[1]{Corresponding author}

\begin{abstract}
{We} consider the problem of finding anomalies in a $d$-dimensional field of independent random variables $\{Y_i\}_{i \in \left\{1,...,n\right\}^d}$, each distributed according to a one-dimensional natural exponential family $\mathcal F = \left\{F_\theta\right\}_{\theta \in\Theta}$. Given some baseline parameter $\theta_0 \in\Theta$, the field is scanned using local likelihood ratio tests to detect from a (large) given system of regions $\mathcal{R}$ those regions $R \subset \left\{1,...,n\right\}^d$ with $\theta_i \neq \theta_0$ for some $i \in R$. We provide a unified methodology which controls the overall family wise error (FWER) to make a wrong detection at a given error rate.

Fundamental to our method is a Gaussian approximation of the distribution of the underlying multiscale test statistic with explicit rate of convergence. From this, we obtain a weak limit theorem which can be seen as a generalized weak invariance principle to non identically distributed data and is of independent interest. Furthermore, we give an asymptotic expansion of the procedures power, which yields minimax optimality in case of Gaussian observations.
\end{abstract}

\textit{Keywords:} exponential families, multiscale testing, invariance principle, scan statistic, weak limit, family wise error rate \\[0.1cm]

\textit{AMS classification numbers:} Primary 60F17, 62H10, Secondary 60G50, 62F03. \\[0.3cm]
\section{Introduction}
Suppose we observe an independent, $d$-dimensional field $Y$ of random variables
\begin{equation}\label{eq:model}
Y_i \sim F_{\theta_i}, \qquad i \in I_n^d := \left\{1,...,n\right\}^d,
\end{equation}
where each observation is drawn from the same given one-dimensional natural exponential family model $\mathcal F=\{F_\theta\}_{\theta \in \Theta}$, but with potentially different parameters $\theta_i$. Prominent examples include $Y_i$ with varying normal means $\mu_i$ or a Poisson field with varying intensities $\lambda_i$. Given some baseline parameter $\theta_0 \in \Theta$ (e.g. all $\mu_i=0$ for a Gaussian field), we consider the problem of finding anomalies (hot spots) in the field $Y$, i.e. we aim to identify those regions $R \subset I_n^d$ where $\theta_i \neq \theta_0$ for some $i \in R$. Here $R$ runs through a given family of candidate regions $R\in \mathcal R_n \subset \mathcal P(I_n^d)$ {where $\mathcal P(A)$ denotes the power set of a set $A$}. For simplicity, we will suppress the subindex $n$ whenever it is clear from the context, i.e. write $\mathcal R = \mathcal R_n$ in what follows. Such problems occur in numerous areas of application ranging from astronomy and biophysics to genetics engineering, specific examples include detection in radiographic images \citep{Kazantsev2002791}, genome screening \citep{jiang2016assessing} and object detection in astrophysical image analysis \citep{SourceDetection}, to mention a few. Our setting includes the important special cases of Gaussian {\citep{CastroDonoho1,sac16, kou2017identifying, cs17}}, Bernoulli \citep{Walther}, and Poisson random fields {\citep{Zhangetal, ketal05,rivera2013optimal,tu2013maximum}}. Extensions to models without exponential family structure as well as replacing the baseline parameter $\theta_0$ by a varying field of known baseline intensities can be treated as well (cf. Remark \ref{rem:model} below), but to keep the presentation simple, we restrict ourselves to the afore mentioned setting. 

\subsection{Methodology}

Inline with the above mentioned references (see also Section \ref{subsec:lit} {for a more comprehensive review}), the problem of finding hot spots is regarded as a multiple testing problem, i.e. many 'local' tests on the regions $\mathcal R$ are performed simultaneously, while keeping the overall error of wrong detections controllable. For a fixed region $R \in \mathcal R$ the likelihood ratio test (LRT) for the testing problem
\begin{equation}\label{eq:single_testing_hypothesis}\tag{$H_{R,n}$}
\forall~i \in R: \theta_i = \theta_0
\end{equation}
vs.
\begin{equation}\label{eq:single_testing_alternative}\tag{$K_{R,n}$}
\exists~i \in R \text{ s.t. }\theta_i \neq \theta_0,
\end{equation}
is a powerful test in general, and often known to have certain optimality properties (depending on the structure of $R$, see e.g. \citet{lr05}). Therefore, the LRT will always be considered throughout this paper as the 'local' test. We stress, however, that our methodology could also be used for other systems of local tests, provided they obey a sufficiently well behaving asymptotic expansion (see Remark \ref{rem:model}).
The LRT is based on the test statistic
\begin{equation}\label{eq:lr_stat}
T_R(Y,\theta_0):=\sqrt{2\log\left(\frac{\sup_{{\theta \in \Theta}}\prod_{i\in R}f_{\theta}(Y_{i})}{\prod_{i\in R}f_{\theta_0}(Y_{i})}\right)},
\end{equation}
where $f_\theta$ denotes the density of $F_\theta$, and $H_{R,n}$ is rejected when $T_R(Y,\theta_0)$ is too large. As it is not known a priori which regions $R$ might contain anomalies, i.e. for which $R \in \mathcal R$ the alternative \ref{eq:single_testing_alternative} might hold true, it is {required} to control the family wise error arising from the multiple test decisions of the local tests based on $T_R \left(Y, \theta_0\right)$, $R \in \mathcal R$. Obviously, without any further restriction on the complexity of $\mathcal R$ this error cannot be controlled. To this end, we will assume that the regions $R$ can be represented as a sequence of discretized regions in
\begin{equation}\label{eq:defi_R_n}
\mathcal{R}=\mathcal R_n := \left\{R \subset I_n^d ~\big|~ R = I_n^d \cap n R^* \text{ for some }R^* \in \mathcal R^*\right\}
\end{equation}
for some system of subsets (e.g.\ all hypercubes) of the unit cube $\mathcal R^* \subset \mathcal P(\left[0,1\right]^d)$, to be specified later. This gives rise to the sequence of multiple testing problems
\begin{equation}\label{eq:multiple_testing}
H_{R,n} \text{ vs. }K_{R,n} \qquad\mathbf{simultaneously}\text{ over } \mathcal R_n{, \quad n \in \mathbb{N}.}
\end{equation}
The aim of this paper is to provide methodology to control (asymptotically) the family wise error rate (FWER) $\alpha \in \left(0,1\right)$ when \eqref{eq:multiple_testing} is considered as a multiple testing problem, i.e. to provide a {sequence of} multiple test{s} $\Phi{:= \Phi_n}$ \citep[see e.g.][]{d14} for \eqref{eq:multiple_testing} such that
\begin{equation}\label{eq:FWER}
\sup\limits_{R \in \mathcal R_n} \P{H_{R,n}}{\Phi\text{ rejects any }H_{R',n}\text{ with }R' \subset R} \leq \alpha + o(1)
\end{equation}
as $n \to \infty$. In words, this ensures that the probability of making any wrong detection is controlled {at} level $\alpha$, as $n \rightarrow \infty.$

This task has been the focus of several papers during the last decades, for a detailed discussion see Section \ref{subsec:lit}.
We contribute to this field by providing a general theory for a unifying method in the model \eqref{eq:model} {including Gaussian, Poisson and Bernoulli observations.}
In view of \citep{accd11}, where observations from exponential families as in \eqref{eq:model} are {also} discussed, but the local tests are always as in the Gaussian case, we emphasize that our local tests are of type \eqref{eq:lr_stat}, hence exploiting the likelihood in the exponential family. This will result in improved power {and better finite sample accuracy} (see \citet{frick2014multiscale} for $d=1$). Our main technical contribution is to prove a weak limit theorem for the asymptotic distribution of our test statistic for general exponential family models as in \eqref{eq:model} and arbitrary dimension $d$. This can be viewed as a "multiscale" weak invariance principle for independent but not necessarily identically distributed r.v.'s. Further, we will provide an asymptotic expansion of the test's power which leads to minimax optimal detection of the test in specific models. 

{Throughout the following, we consider tests} of scanning-type, controlling the FWER by the maximum over the local LRT statistics in \eqref{eq:lr_stat}, i.e.
\begin{align}\label{MultiscaleStat}
T_n \equiv T_n (Y,\theta_0, \mathcal R_n,v):=\max_{R\in\mathcal R_n} \left[ T_R(Y,\theta_0)-\pen{v}{\left|R\right|}\right].
\end{align}
Here $\left|R\right|$ denotes the number of points in $R$. The values 
\begin{equation}\label{eq:defi_pen}
\pen{v}{r} := \sqrt{2 v \left(\log \left(n^d/ r\right) + 1\right)}
\end{equation}
where $\log$ denotes the natural logarithm, act as a scale penalization, {see also \citep{ds01, DuemWalt,Walther,frick2014multiscale}}. {This penalization with proper choice of $v$ guarantees} optimal detection power on all scales simultaneously as it prevents smaller regions from dominating the overall test statistic {(see Section \ref{subsec:power})}. {To obtain an a.s.\ bounded distributional limit for $T_n$ in \eqref{MultiscaleStat}, the} constant $v$ in \eqref{MultiscaleStat} can be any upper bound of the complexity of $\mathcal R^*$ measured in terms of the packing number (see {Assumption \ref{ass:v}} below). {For example, whenever} $\mathcal R^*$ has finite VC-dimension $\nu \left(\mathcal R^*\right)$, we can choose ${v} = \nu\left(\mathcal R^*\right)$. {However, we will see that} the test {has} better detection properties if $v$ is as small as possible (see Section \ref{subsec:power}). 
Hence, from this point of view it is advantageous to know exactly the complexity {of $\mathcal{R}^*$ in terms of the packing number}, a topic which has received less attention than computing VC-dimensions. Therefore, we compute the packing numbers for three important examples of $\mathcal R^*$, namely hyperrectangles, hypercubes and halfspaces explicitly in Appendix A.

\subsection{Overview over the results}

To construct a test which controls the FWER \eqref{eq:FWER}, we have to find a sequence of universal global thresholds $q_{1-\alpha,n}$ such that
\begin{align}\label{eq:prob_threshold}
\P{0}{T_n >q_{1-\alpha,n}} \leq \alpha + o(1),
\end{align}
where $\mathbb P_0 := \mathbb P_{H_{I_n^d,n}}$ corresponds to the case that no anomaly is present. Such a threshold suffices, as it can be readily seen from \eqref{MultiscaleStat} that
\begin{align*}
\sup\limits_{R \in \mathcal R_n} \P{H_{R,n}}{\Phi\text{ rejects any }H_{R',n}\text{ with }R' \subset R}  & \leq 
\sup\limits_{R \in \mathcal R_n} \P{H_{R,n}}{\Phi\text{ rejects }H_{R,n}} \\
& \leq \P{0}{\Phi\text{ rejects }H_{I_n^d,n}}.
\end{align*}
{Given $q_{1-\alpha,n}$,} the multiple test will reject whenever $T_n \geq q_{1-\alpha,n}$, and each local test rejects if $T_R \left(Y, \theta_0\right) \geq q_{1-\alpha,n} + \pen{v}{\left|R\right|}$. Due to \eqref{eq:FWER} and \eqref{eq:prob_threshold}, this will not be the case with (asymptotic) probability {$\leq \alpha$} for any $R \in \mathcal R_n$ such that $H_{R,n}$ holds true. 

To obtain the thresholds $q_{1-\alpha,n}$ we provide a Gaussian approximation of the scan statistic \eqref{MultiscaleStat} under $\mathbb P_0$ given by
\begin{equation}\label{eq:Gaussian_approximation}
M_n \equiv M_n\left(\mathcal R_n, v\right) := \max_{R\in\mathcal R_n} \left[ \left|R\right|^{-1/2} \left| \sum\limits_{i \in R} X_i \right| - \pen{v}{\left|R\right|} \right]
\end{equation}
with i.i.d. standard normal r.v.'s $X_i$, $i \in I_n^d$. We also give a rate of convergence of this approximation (Thm. \ref{thm:approximation}), which is determined by the smallest scale in $\mathcal R_n$. Based on these results, we obtain the $\mathbb P_0$-limiting distribution of $T_n$ as that of
\begin{equation}\label{eq:Gaussian_limit}
M\equiv M\left(\mathcal{R}^*, v\right):= \sup\limits_{R^* \in \mathcal{R}^*} \left[ \frac{|W(R^*)|}{\sqrt{|R^*|}} - \pen{v}{n^{d} \left|R^*\right|}\right] < \infty \text{ a.s.},
\end{equation}
where $W$ is white noise on $\left[0,1\right]^d$ and (with a slight abuse of notation) $\left|R^*\right|$ denotes the Lebesgue measure of $R^* \in \mathcal R^*$. This holds true as soon as $\mathcal R^*$ and $\mathcal R_n$ have a finite complexity, $\mathcal R^*$ consists of sets with a sufficiently regular boundary (see Assumption \ref{ass:R_complex}(b) below), and the smallest scales $|R_n|$ of the system $\mathcal{R}_n$ are restricted suitably, see \eqref{eq:r_n} below and the discussion there. 

In case of $\mathcal R^*$ being the subset of all hypercubes, we will also give an asymptotic expansion of the above test's power, which allows to determine the necessary average strength of an anomaly such that it will be detected with asymptotic probability $1$. This is only possible due to the penalization in \eqref{MultiscaleStat}, as otherwise the asymptotic distribution is not a.s. finite. If the anomaly is sufficiently small, we show that the anomalies which can be detected with asymptotic power one by the described multiscale testing procedure are the same as those of the oracle single scale test, which knows the size (scale) of the anomaly in advance. This generalizes findings of \citet{sac16} to situations where not only the mean of the signal is allowed to change, but its whole distribution. Furthermore, if the observations are Gaussian, {and $\mathcal R^*$ is the system of squares, our test with the proper choice $v = 1$ (see Example \ref{ex:sets})} achieves the asymptotic optimal detection boundary, i.e. no test can have larger power in a minimax sense, asymptotically. 

\subsection{Computation}

Note that {the} weak {limit $M$ of $T_n$} in \eqref{eq:Gaussian_limit} does not depend on any unknown quantity, and hence can be e.g. simulated generically in advance for any given system $\mathcal R$ as soon as a bound for the complexity of $\mathcal R^*$ can be determined. {If the system $\mathcal R$ has special convolution-type structure, we discuss an efficient implementation using fast Fourier transforms in Section \ref{sec:implementation} with computational complexity $\mathcal O \left(d \#\text{ scales } n^d \log n\right)$ for a single evaluation of $T_n$ or $M_n$. Once the quantiles are pre-computed, this allows for fast processing of incoming data sets.}

\subsection{Literature review and connections to existing work}\label{subsec:lit}~
Scan statistics and scanning-type procedures based on the maximum over an ensemble of local tests
have received much attention in the literature over the past decades. To determine the quantile, a common option is to approximate the tails of the asymptotic distribution suitably, as done e.g. by \citet{sv95,sy00, nw04,pgks05,fang2016poisson} for $d = 1$, by \citet{hp06} for $d=2$, and by \citet{j02} in arbitrary dimensions.
If the random field is sufficiently smooth (in contrast to the setting here) the Gaussian kinematic formula or similar tools can be employed, see e.g. \citet{adler2000excursion},  \citet{taylor2007detecting}, \citet{sga11}, \citet{cs17}. 
We also mention \citet{Alm1998}, who considers the situation of a fixed rectangular scanning set in two and three dimensions. In all these papers, no penalization has been used, which automatically leads to a preference for small scales of order $\log(n)$ \citep[see e.g.][]{KabluchkoMunk:2009} and to an extreme value limit, in contrast to {the weak invariance principle type limit} \eqref{eq:Gaussian_limit}. \citet{arias2017distribution} study the case of an unknown null distribution and propose a permutation based approximation, which is shown to perform well in the natural exponential family setting \eqref{eq:model}, however, only for $d = 1$. {Technically, mostly} related to our work are weak limit theorems for scale penalized scan statistics, which have e.g. been obtained by \citet{frick2014multiscale} and \citet{sac16}. However, these results are either limited to special situations such as Gaussian observations, or to $d = 1$. If a limit exists, the quantiles of the finite sample statistic can be {used to bound} the quantiles of the limiting ones as e.g. done by \citet{ds01, rivera2013optimal,ds18}.

Our results can be interpreted in both ways as we provide a Gaussian approximation of the scan statistic in \eqref{MultiscaleStat} by \eqref{eq:Gaussian_approximation} \textit{and} that we obtain \eqref{eq:Gaussian_limit} as a weak limit. 

Weak limit{s} for $T_n$ as in {\eqref{eq:Gaussian_limit}} are immediately connected to those for partial sum processes. Classical KMT-like approximations \citep[see e.g.][]{komlos1976approximation,Rio,Massart:1989} provide in fact a strong coupling of the whole process $\left(T_R\left(Y, \theta_0\right)\right)_{R \in \mathcal R_n}$ to a Gaussian version. Results of this form have been employed for $d=1$ previously in \citet{SHMunkDuem2013,frick2014multiscale}. Proceeding like this for general $d$ will restrict the system $\mathcal{R}_n$ to scales $r_n$ s.t. $\left|R\right|  \geq r_n$ where
\begin{align}\label{eq:KMT_r_n}
n^{d-1} \log(n) = o\left(r_n\right)
\end{align}
as $n \to \infty$, which is unfeasibly large for $d\geq 2$. Therefore, we take a different route and employ a coupling of the maxima in \eqref{MultiscaleStat} and \eqref{eq:Gaussian_approximation}, which relies on recent results by \citet{chernozhukov2014gaussian}, see also \citet{proksch2016multiscale}. However, in contrast to the present paper, they do not consider the local LR statistic and require that {$\left|R\right| = o\left(n^d\right)$ for all $R$.} This {excludes large scales and} leads to an extreme value type limit in contrast to $\eqref{eq:Gaussian_limit}$ which incorporates all (larger) scales. To make use of Chernozhukov et al.'s (2014) coupling results in our general setting, we provide a symmetrization-like upper bound for the expectation of the maximum of a partial sum process by a corresponding Gaussian version, cf. {Lemma} \ref{prop:partialsum}. Doing so we are able to approximate the distribution of $T_n$ in \eqref{MultiscaleStat} by \eqref{eq:Gaussian_approximation} as soon as we restrict ourselves to $R \in \mathcal R_n$ with $\left|R\right| \geq r_n$ where the smallest scales only need to satisfy {the lower scale bound (LSB)
	\begin{equation}\label{eq:r_n}
	\log^{12}(n) = o\left(r_n\right) \text{ as }n \to \infty,
	\end{equation}}
which compared to \eqref{eq:KMT_r_n} allows for considerably smaller scales whenever $d \geq 2$. Note that \eqref{eq:r_n} does not to depend on $d$. However, as we consider {the discretized sets in $I_n^d$ here, the corresponding lower bound $a_n$ for sets in $\mathcal R^* \subset \mathcal P (\left[0,1\right]^d)$} is $n^{-d}\log^{12}(n) = o \left(a_n\right)$, which in fact depends on $d$ as now the volume of the largest possible set has been standardized to one (see \eqref{eq:defi_R_n} and Theorem \ref{thm:asymptPower} below) and coincides with the sampling rate $n^{-d}$ up to a poly-$\log$-factor. In contrast, \eqref{eq:KMT_r_n} gives $n^{-1}\log(n) = o\left(a_n\right)$, independent of $d$, which only for $d=1$ achieves the sampling rate $n^{-d}$. Under \eqref{eq:r_n} we also obtain $\OP \left(\left(\log^{12}(n)/r_n\right)^{1/10}\right)$ as rate of convergence of this approximation (see \eqref{eq:approx_in_prob} below). 

Also the asymptotic power of scanning-type procedures has been discussed in the literature. An early reference is \cite{CastroDonoho1}, who provide a test for $d = 1$ achieving optimal detection power on the smallest scale. However, to obtain optimal power on all scales, a scale dependent penalization is necessary. We mention \citet{Walther}, who achieves this for the detection of spatial clusters in a two dimensional Bernoulli field by scale adaptive thresholding of local test statistics. \citet{bi13} for $d=2$ and \citet{kou2017identifying} for general $d$ provide optimality of scanning procedures for Gaussian fields. Based on \citet{kabluchko2011extremes}, \citet{sac16} provide asymptotic power expansions for the multiscale statistic in \eqref{MultiscaleStat} with a slightly different penalization, yielding minimax optimality in case of $d$-dimensional Gaussian fields. Inspired by their, however incomplete, proof, we are able to generalize these results in case of $\mathcal R^*$ being the set of all hypercubes to {the} exponential family model, \eqref{eq:model}, despite the fact that under the alternative the whole distribution in \eqref{eq:model} might change, whereas for Gaussian fields typically only the mean changes. Doing so we obtain sharp detection {boundaries}, which are known to be minimax in the Gaussian situation, if the parameter $v$ in the penalization \eqref{eq:defi_pen} is chosen to be equal to the packing number of the system of hypercubes. In contrast, if $v$ is chosen to be the VC-dimension, the detection power turns out to be suboptimal. This emphasizes the importance of knowledge of the packing number explicitly, {for an illustration cf. Example \ref{ex:cub_rect}}.

{Finally we also mention weaker error measures such as the false discovery rate (FDR) as a potential alternative to FWER control and hence more sensitive tests are to be expected} \citep[see e.g.][]{bh95,by01,lms16}. {However, this }is a different task and beyond the scope of our paper.

\section{Theory}
In this section we will summarize our theoretical findings. In Section \ref{subsec:setting} we give an overview and details on our precise setting and present our assumptions on the set of candidate regions $\mathcal R^*$. Section \ref{subsec:limit} provides the validity of the Gaussian approximation in \eqref{eq:Gaussian_approximation} and determines the $\mathbb P_0$-limiting distribution of $T_n$. In Section \ref{subsec:power} we derive an asymptotic expansion of the detection power. {Throughout this paper, the constants appearing might depend on $d$.}

\subsection{Setting and Assumptions}\label{subsec:setting}
In the following we assume that $\mathcal{F}=\{F_\theta\}_{\theta \in \Theta}$ in \eqref{eq:model} is a one-dimensional exponential family, which is regular and minimal, i.e. the {$\nu$-}densities of $F_\theta$ are of the form $f_\theta(x) = \exp \left({\theta x} - \psi(\theta)\right)$, the natural parameter space
\[
\mathcal{N}= \left\lbrace \theta \in \R^d: \int_{\R^d} \exp(\theta x) \,{\mathrm d\nu(x)} <\infty \right\rbrace
\]
is open and the cumulant transform $\psi$ is strictly convex on $\mathcal{N}$. Then, the moment generating function exists and the random variables $Y_i$ have sub-exponential tails, see \citet{casella2002statistical} and \citet{brown1986fundamentals} for details. {We further assume that $\operatorname{Var}Y_i >0.$}

\begin{example}\label{ex:models}
Let us discuss three important examples of the model \eqref{eq:model}.
\begin{enumerate}
\item Gaussian fields: Let $Y_i \sim \mathcal N\left(\theta, \sigma^2\right)$ where the variance $\sigma^2>0$ is fixed.  In this case, $\psi(\theta)= \frac{1}{2} \theta^2$, and 
\[
T_R \left(Y, \theta_0 \right)= 
\sqrt{|R|}\frac{ \left|\overline{Y}_R - \theta_0 \right|}{\sigma}
\]
\item Bernoulli fields: Let $Y_i \sim \text{Bin}\left(1,p\right)$ with $p \in \left(0,1\right)$. {Note, that w.l.o.g.\ the} cases $p=0$ and $p=1$ {are} excluded {as} in these cases one would screen the field correctly, anyway. The natural parameter is $\theta = \log\left(p/(1-p)\right)$, and using $\psi \left(\theta\right)= \log\left(1+ \exp\left(\theta\right)\right)$ we compute \begin{align*}
T_R \left(Y, \theta_0\right)= \sqrt{2 |R| \left[\overline{Y}_R \log\left( \frac{\overline{Y}_R}{\frac{\exp(\theta_0)}{1+ \exp(\theta_0)}}\right) {+} (1-\overline{Y}_R) \log\left(\frac{1- \overline{Y}_R}{\frac{1}{\exp(\theta_0)+1}} \right) \right]}. 
\end{align*}
\item Poisson fields: Let $Y_i \sim \text{Poi}(\lambda)$ with $\lambda \in \mathbb R$. Again, $\lambda = 0$ has to be excluded, but this case is again trivial. The natural parameter is $\theta = \log\left(\lambda\right)$, and using $\psi\left(\theta\right) = \exp\left(\theta\right)$ we compute 
\begin{align*}
T_R(Y, \theta_0)= \sqrt{2 |R| \left[\overline{Y}_R \log\left(\frac{\overline{Y}_R}{\exp( \theta_0)}\right) - (\overline{Y}_R- \exp(\theta_0)) \right]}.
\end{align*}
\end{enumerate}
\end{example}

To derive the Gaussian approximation \eqref{eq:Gaussian_approximation} of $T_n$ in \eqref{MultiscaleStat}, we need to restrict the cardinality of $\mathcal R_n$:
\begin{assumption}[Cardinality of $\mathcal R_n$]\label{ass:R_card}
There exist constants $c_1, c_2>0$ such that
\begin{equation}\label{eq:finite_cardinality}
\#\left(\mathcal R_n \right) \leq c_1 n^{c_2}.
\end{equation}
\end{assumption}

To furthermore control the supremum in \eqref{eq:Gaussian_limit}, we have to restrict the system of regions $\mathcal R^*$ suitably. To this end, we introduce some notation. 

For a set $R^* \in \mathcal R^*$ and $x \in \left[0,1\right]^d$ we define $d \left(x,\partial R^*\right) :=\inf_{y \in \partial R^*} \left\Vert x-y\right\Vert_2$ where $\partial R^*$ denotes the topological boundary of $R^*$, i.e. $\partial R^* = \overline{R^*} \setminus \left(R^*\right)^\circ$. Furthermore we define the $\epsilon-$annulus $R^*(\epsilon)$ around the boundary of $R^*$ for some $\epsilon >0$ as
\[
R^*(\epsilon) := \left\{ x \in \left[0,1\right]^d ~\big|~ d\left(x,\partial R^*\right) < \epsilon\right\}.
\]
{In the following we will consider the symmetric difference 
\[
R_1^* \bigtriangleup R_2^* := \left(R_1^* \cup R_2^*\right) \setminus \left(R_1^* \cap R_2^*\right), \qquad R_1^*, R_2^* \in \mathcal{R}^*
\]
and the corresponding metric 
\begin{equation}\label{eq:metric}
\rho^*\left(R_1^*, R_2^*\right) := \sqrt{\left|R_1^* \bigtriangleup R_2^*  \right|}, \qquad \text{for}\qquad R_1^*, R_2^* \in \mathcal R^*.
\end{equation}

To derive the weak limit of $T_n$, we need to restrict the system $\mathcal R^*$ further. Recall the VC-dimension \citep[see e.g.][]{VaartWellner:1996}.
\begin{assumption}[Complexity and regularity of $\mathcal R^*$]\label{ass:R_complex}~
\begin{itemize}
\item[(a)] The VC-Dimension $\nu \left(\mathcal R^*\right)$ of the set $\mathcal R^*$ is finite. 
\item[(b)] There exists some constant $C>0$ such that $|R^*(\epsilon)| \leq C \epsilon$ for all $\epsilon>0$ and all $R^* \in \mathcal R^*$ with the Lebesgue measure $\left|\cdot\right|$.
\end{itemize}
\end{assumption}
Finally, to ensure a.s. boundedness of the limit in \eqref{eq:Gaussian_limit}, we will furthermore require that $v$ in \eqref{MultiscaleStat} is chosen appropriately{. To this end we introduce the packing number $\mathcal{K}(\epsilon, \rho, \mathcal W)$ of a subset $\mathcal W$ of $\mathcal{R}^*$ w.r.t. a metric $\rho$, which is given by the maximum number $m$ of elements $W_1, \ldots, W_m \in \mathcal{W}$ s.t. $\rho(W_i, W_j)> \epsilon$ for all $i \neq j$,  
i.e. by the largest number of $\epsilon$-balls w.r.t. $\rho$ which can be packed inside $\mathcal W$, see e.g. \citet[Def. 2.2.3]{VaartWellner:1996}.}
\begin{assumption}[Choice of $v$]\label{ass:v}
The constant $v$ in \eqref{MultiscaleStat} and \eqref{eq:defi_pen} is chosen such that there exist constants $k_1, k_2>0$ such that
\begin{equation}\label{eq:capacity_bound}
\mathcal{K}\left((\delta u)^{1/2}, \rho^*, \{R \in \mathcal{R}^*: |R|\leq \delta\} \right) \leq k_1 u^{-k_2} \delta^{-v}
\end{equation}
for all $u, \delta \in (0,1]$ with $\rho^*$ as in \eqref{eq:metric}.
\end{assumption}

Let us briefly comment on the above assumptions. 
\begin{remark}\label{rem:ass}~
\begin{itemize}
\item Assumption \ref{ass:R_card} will allow us to apply recent results by \citet{chernozhukov2014gaussian} to couple the process in \eqref{MultiscaleStat} with a Gaussian version as in \eqref{eq:Gaussian_approximation}. Note that Assumption \ref{ass:R_complex}(a) immediately implies Assumption \ref{ass:R_card}. 
\item We stress that the Assumption \ref{ass:R_complex}(b) is satisfied whenever $\mathcal{R}^*$ consists of regular Borel sets $R^*$ only, i.e. each $R^* \in \mathcal R^*$ is a Borel set and $\left|\partial R^*\right| = 0$ for all $R^* \in \mathcal R^*$.
\item Note that Assumption \ref{ass:R_complex}(a) also implies that $v = \nu\left(\mathcal R^*\right)$ is a valid choice in the sense of Assumption \ref{ass:v}. This basically follows from the relationship between capacity and covering numbers and {a bound on covering numbers from} \citet[Thm. 2.6.4]{VaartWellner:1996}. However, \eqref{eq:capacity_bound} might also be satisfied for considerably smaller numbers $v$ (see the examples below).
\end{itemize}
\end{remark}

}

\begin{example}\label{ex:sets}~
	\begin{enumerate}
		\item Consider the set $\mathcal S^*$ of all hyperrectangles in $\left[0,1\right]^d$, i.e. each $S^* \in \mathcal S^*$ is of the form $S^* = \left[s,t\right] := \left\{x \in \left[0,1\right]^d ~\big|~ s_i \leq x_i \leq t_i \text{ for } 1 \leq i \leq d\right\}$. {Obviously, the corresponding discretization $\mathcal S_n$ consists of hyperrectangles in $I_n^d$, which are determined by their upper left and lower right corners, i.e. $\#\left(\mathcal S_n \right) \leq n^{2d}$, which proves Assumption \ref{ass:R_card}. According to \citet[][Ex. 2.6.1]{VaartWellner:1996} we have $\nu\left(\mathcal S^*\right) = 2d$, and as $\mathcal S^*$ consists only of regular Borel sets, also Assumption \ref{ass:R_complex} is satisfied. In Appendix A we give a simple argument that Assumption \ref{ass:v} holds true whenever $v>  2d-1$. Employing more refined computations, it can even be shown that $v = 1$ is a valid choice if we allow for additional powers of $\left(-\log\left(\delta\right)\right)$ on the right-hand side of \eqref{eq:capacity_bound}, see Theorem 1 in \citet{Walther} or Lemma 2.1 in \citet{ds18}.}
		\item We may also consider the (smaller) set $\mathcal Q^*$ of all hypercubes in $\left[0,1\right]^d$, i.e. each $Q^* \in \mathcal Q^*$ is of the form $\left[t,t+h\right]$ with $t \in \left[0,1\right]^d$ and $0 < h \leq 1- \max_{1 \leq i \leq d} t_i$. As $\mathcal Q^* \subset \mathcal S^*$, {Assumptions \ref{ass:R_card} and \ref{ass:R_complex} are satisfied. Refined computations in Appendix A show that $v=1$ is a valid choice in the sense of Assumption \ref{ass:v}, independent of $d$ (opposed to the VC-dimension $\nu\left(\mathcal Q^*\right) = \lfloor \frac{3d+1}{2} \rfloor$ according to \citet{despres2014vapnik}).}
		\item Let $\mathcal{H}^*$ be the set of all half-spaces in $\left[0,1\right]^d$, i.e.
		\begin{align*}
		\mathcal{H}^* := \left\lbrace H_{a, \alpha} \left| \right. \alpha \in \R, a \in \mathbb{S}^{d-1}\right\rbrace,\quad H_{a, \alpha} := \left\lbrace x \in \left[0,1\right]^d ~\big|~ \langle x, a \rangle \geq \alpha \right\rbrace.
		\end{align*}
		The VC-dimension of $\mathcal H^*$ is $\leq d+1$ \citep[see e.g.][Cor. 4.2]{devroye2012combinatorial}, which proves that {Assumptions \ref{ass:R_card} and \ref{ass:R_complex} are satisfied. On the other hand, we prove in Appendix A that $v= 2$ satisfies Assumption \ref{ass:v}.}
	\end{enumerate}
\end{example}

\begin{remark}
	As discussed in the introduction, we will show in case of hypercubes that a smaller value of $v$ in Assumption \ref{ass:v} and hence in \eqref{eq:defi_pen} will lead to a better detection power. More precisely, only for $v =1$ we will obtain minimax optimality in a certain sense (see Section \ref{subsec:power} below). In case of hyperrectangles this is more involved, but it can, however, be argued along \citet{Walther} that for $d = 2$ the choice $v = 1$ yields minimax optimality also in this situation for specific sequences of rectangles.
\end{remark}

\subsection{Limit theory}\label{subsec:limit}

Now we are in position to show that the quantiles of the multiscale statistic in \eqref{MultiscaleStat} can be approximated uniformly by those of the Gaussian version in \eqref{eq:Gaussian_approximation}, and furthermore that $M_n\left(\mathcal R_n, v\right)$ in \eqref{eq:Gaussian_approximation} converges to a non-degenerate limit {whenever $v$ satisfies Assumption \ref{ass:v}}. For the former we require a lower bound on the smallest scale as given in \eqref{eq:r_n}. Given a discretized set of candidate regions $\mathcal R_n \subset \mathcal P\left(I_n^d\right)$ and $c>0$ we introduce 
\[
\mathcal R_{n| c} := \left\{R \in \mathcal R_n ~\big|~ \left|R\right| \geq c\right\}.
\]
With this notation we can formulate our main theorem{s}.
\begin{theorem}[Gaussian approximation]\label{thm:approximation}
Let $Y_i$, $i \in I_n^d$ be a field of random variables as in \eqref{eq:model}, let $\mathcal{R}^*$ be a set of candidate regions {satisfying Assumption \ref{ass:R_card}} and let $(r_n)_n\subset(0,\infty)$ be a sequence such that the LSB \eqref{eq:r_n} holds true. Let $v \in \mathbb R$ be fixed. 
\begin{itemize}
\item[(a)] {Then under $\mathbb{P}_0$}
\begin{equation}\label{eq:approx_in_prob}
T_n\left(Y, \theta_0, \mathcal R_{n | r_n}, v\right) - M_n\left(\mathcal R_{n | r_n}, v\right)  = \OP\left( \left(\frac{\log^{12}(n)}{r_n} \right)^{1/10}\right)
\end{equation}
as $n \to \infty$ with $M_n$ as in \eqref{eq:Gaussian_approximation}.
\item[(b)] For all $q \in \R$ we have
\begin{equation}\label{eq:quantile_approx}
\lim\limits_{n \to \infty} \left|\P{0}{T_n\left(Y, \theta_0, \mathcal R_{n | r_n}, v\right) > q} - \P{}{M_n\left(\mathcal R_{n | r_n}, v\right) > q}\right| = 0.
\end{equation}
\end{itemize}
\end{theorem}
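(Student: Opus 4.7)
The plan is to prove part (a) by two linked steps — a deterministic Taylor reduction from the LR statistic to a normalized sum, followed by a Gaussian coupling for the resulting maximum — and then to deduce part (b) from (a) via an anti-concentration argument.

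First I would exploit the exponential-family structure. Writing $f_\theta(y)=\exp(\theta y-\psi(\theta))$, the MLE on $R$ satisfies $\psi'(\hat\theta_R)=\bar Y_R$, so the local statistic takes the Kullback--Leibler form $T_R^2/2=|R|\,h(\bar Y_R)$ with $h$ smooth, $h(\mu_0)=h'(\mu_0)=0$, and $h''(\mu_0)=1/\sigma_0^2$, where $\mu_0=\psi'(\theta_0)$ and $\sigma_0^2=\psi''(\theta_0)>0$. A second-order Taylor expansion around $\mu_0$ yields
$$T_R(Y,\theta_0)=\sigma_0^{-1}|R|^{-1/2}\Bigl|\sum_{i\in R}(Y_i-\mu_0)\Bigr|+\mathrm{Rem}_R.$$
Under $\P{0}{\cdot}$ the variables $Y_i-\mu_0$ are centred and sub-exponential, so a Bernstein bound combined with a union bound over the polynomially many regions (Assumption \ref{ass:R_card}) gives $\max_R |\bar Y_R-\mu_0|=\OP(\sqrt{\log n/r_n})$. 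On this event $\mathrm{Rem}_R$ is of order $\sqrt{|R|}\,|\bar Y_R-\mu_0|^3=\OP(\log^{3/2}(n)/r_n)$, which is dominated by the target rate $(\log^{12}(n)/r_n)^{1/10}$ thanks to the LSB \eqref{eq:r_n}.

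Next I would turn to the Gaussian coupling. Setting $Z_i=(Y_i-\mu_0)/\sigma_0$, the claim reduces to approximating $\widetilde T_n:=\max_{R\in\mathcal R_{n|r_n}}[|R|^{-1/2}|\sum_{i\in R}Z_i|-\pen{v}{|R|}]$ by $M_n$. I would apply the Gaussian approximation theorem of \citet{chernozhukov2014gaussian} for maxima of sums of independent vectors to the centred vector whose coordinates are $\pm|R|^{-1/2}\sum_{i\in R}Z_i$, $R\in\mathcal R_{n|r_n}$. Doubling with the sign linearizes the absolute value, the ambient dimension stays polynomial in $n$ by Assumption \ref{ass:R_card}, and the unit-variance sub-exponential summands yield exactly the exponent $1/10$ and the $\log^{12}$ factor after optimizing the CCK rate. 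The deterministic penalization $-\pen{v}{|R|}$ passes through the coupling unchanged, and the symmetrization-style Lemma \ref{prop:partialsum} promised in the Introduction converts the CCK coordinate-wise expectation bound into a bound on the penalized maximum, completing (a).

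For (b), pick $\eta_n=(\log^{12}(n)/r_n)^{1/20}$ and split
$$\bigl|\P{0}{T_n>q}-\P{}{M_n>q}\bigr|\leq \P{}{|T_n-M_n|>\eta_n}+\P{}{|M_n-q|\leq\eta_n}.$$
The first term vanishes by (a), and for the second I would invoke the CCK anti-concentration inequality for maxima of Gaussian processes with unit-variance coordinates, which bounds the Lebesgue density of $M_n$ by $C\sqrt{\log n}$ and hence the right-hand side by $O(\eta_n\sqrt{\log n})\to 0$. I expect the main obstacle to be the coupling step, namely tailoring the CCK machinery to a penalized maximum of absolute values of non-Gaussian sums over regions of wildly different sizes — this is where the auxiliary Lemma \ref{prop:partialsum} is indispensable and where the precise exponents in \eqref{eq:r_n} are forced. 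A secondary, subtler issue in the Taylor step is ensuring that $\hat\theta_R$ stays in the interior of the natural parameter space uniformly over $R\in\mathcal R_{n|r_n}$, which is required for the expansion of $h$ to be valid on the high-probability event.
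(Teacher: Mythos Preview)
Your Taylor reduction and your treatment of part (b) via Gaussian anti-concentration are fine; in fact your argument for (b) is more explicit than the paper's one-line ``direct consequence of (a)''. The substantive discrepancy is in the coupling step for (a), and it is a genuine gap.

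You assert that ``the deterministic penalization $-\pen{v}{|R|}$ passes through the coupling unchanged''. This is not justified. The Chernozhukov--Chetverikov--Kato coupling (Corollary~4.1 of \citet{chernozhukov2014gaussian}, which is exactly what the paper invokes in Theorem~\ref{lemma:Chernozhukov_NEF}) produces a Gaussian vector on the same probability space such that the two \emph{maxima} are close; it does \emph{not} control $\max_R\bigl|\,|R|^{-1/2}\sum_{i\in R}Z_i-|R|^{-1/2}\sum_{i\in R}X_i\,\bigr|$. Since the penalty $\pen{v}{|R|}$ varies from $O(1)$ on the largest scales to $\sqrt{2v d\log n}$ on the smallest, subtracting it can change which coordinate attains the maximum, and closeness of the unpenalized maxima says nothing about closeness of the penalized ones. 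You also misread the role of Lemma~\ref{prop:partialsum}: in the paper it is used inside the proof of Theorem~\ref{lemma:Chernozhukov_NEF} to bound the CCK term $B_1$ (the maximum of centered products $Z_i^2-1$ over intersections $R_j\cap R_k$), not to transfer the coupling to the penalized maximum.

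The paper closes this gap by a \emph{slicing} argument: partition $\mathcal R_{n|r_n}$ into bands $\mathcal R_{n,j}=\{R:\exp(\epsilon_j)<|R|<\exp(\epsilon_{j+1})\}$ with $\epsilon_{j+1}-\epsilon_j=\delta_n$, so that on each band the penalty oscillates by at most $O(\delta_n)$ and can be treated as constant. One then applies the CCK coupling bandwise (which gives rate $(\log^{10}(n)/r_n)^{1/6}$, cf.\ Theorem~\ref{lemma:Chernozhukov_NEF}) and takes a union bound over the $|J|\lesssim \log(n)/\delta_n$ bands. Optimizing $\delta_n$ against the CCK probability bound $\lesssim\delta^{-3}(\log^{10}(n)/r_n)^{1/2}$ times $|J|$ yields $\delta_n=(\log^{12}(n)/r_n)^{1/10}$. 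This slicing---not a direct CCK optimization---is precisely what produces the exponent $1/10$ and the extra $\log^2(n)$ factor relative to the unpenalized coupling; the paper flags this explicitly in the remark after Theorem~\ref{lemma:Chernozhukov_NEF}. Your proposal needs either this slicing step or a separate argument that the CCK machinery extends to coordinate-wise deterministic shifts (which is plausible but not what Corollary~4.1 states and would need to be proved).
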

{Note that $M_n$ does not depend on any unknown quantities and can e.g. be simulated for fixed $n$, see Section \ref{sec:numerics} for details. Beyond this, we can now also derive a weak limit of $T_n$. }
\begin{theorem}[Weak $\mathbb P_0$ limit]\label{thm:weak_limit}
{Under the Assumptions of Theorem \ref{thm:approximation} suppose that also Assumption \ref{ass:R_complex} is satisfied. Then it holds for any fixed $v \in \mathbb R$} under $\mathbb P_0$ that
\begin{equation}\label{eq:weak_limit}
T_n\left(Y, \theta_0, \mathcal R_{n | r_n}, v\right)\toD  M \left(\mathcal R^*, v\right) \qquad\text{as}\qquad n \to \infty,
\end{equation}
with $M \left(\mathcal R^*, v\right)$ as in \eqref{eq:Gaussian_limit}. {If $v$ furthermore satisfies Assumption \ref{ass:v},} then $M(\mathcal{R}^*, v)$ is almost surely finite and non-degenerate.
\end{theorem}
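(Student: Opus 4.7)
The plan is to invoke Theorem \ref{thm:approximation}(b) to reduce the statement to the weak convergence $M_n(\mathcal{R}_{n|r_n}, v) \toD M(\mathcal{R}^*, v)$, and to establish the latter through a direct white-noise coupling. Let $W$ denote $d$-dimensional white noise on $[0,1]^d$, set $C_i := \prod_{j=1}^d[(i_j-1)/n, i_j/n]$, and define $X_i := n^{d/2} W(C_i)$; these are i.i.d.\ standard normal, and for each $R \in \mathcal{R}_n$ with discretized hull $\bar{R} := \bigcup_{i \in R} C_i$ one has
\[
|R|^{-1/2}\Bigl|\sum_{i \in R} X_i\Bigr| = |\bar{R}|^{-1/2} |W(\bar{R})|, \qquad \pen{v}{|R|} = \pen{v}{n^d |\bar{R}|}.
\]
Thus $M_n$ and $M$ live on a common probability space, with $M_n$ indexed by the discretized hulls $\bar{R}$ and $M$ by all $R^* \in \mathcal{R}^*$.

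The core step is a sandwich argument giving $M_n \to M$ in probability on this coupled space. Every $R \in \mathcal{R}_n$ arises from some $R^* \in \mathcal{R}^*$ via $R = I_n^d \cap n R^*$, and the symmetric difference $\bar{R} \triangle R^*$ lies inside the $(\sqrt{d}/n)$-annulus of $\partial R^*$; Assumption \ref{ass:R_complex}(b) then yields $|\bar{R} \triangle R^*| = O(n^{-1})$ uniformly in $R^*$. Hence $W(\bar{R}) - W(R^*)$ is centered Gaussian with variance $O(n^{-1})$, and a Gaussian maximal inequality combined with the polynomial cardinality bound (Assumption \ref{ass:R_card}) and the LSB $r_n \gg \log^{12}(n)$ controls this discrepancy uniformly, including the corresponding penalty difference (handled by a Lipschitz estimate on $\pen{v}{\cdot}$ valid since $|R| \geq r_n$). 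This delivers $M_n \leq M + \oP(1)$. The reverse inequality follows by approximating $M$ from below by finitely many $R^* \in \mathcal{R}^*$ of scale bounded away from zero — whose negligible contribution on very small scales is in turn controlled by the chaining argument below — and replacing each such $R^*$ by its discretization, which lies in $\mathcal{R}_{n|r_n}$ for all sufficiently large $n$.

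The a.s.\ finiteness of $M(\mathcal{R}^*, v)$ under Assumption \ref{ass:v} rests on chaining along the dyadic scale shells $\mathcal{W}_k := \{R^* \in \mathcal{R}^* : 2^{-(k+1)} < |R^*| \leq 2^{-k}\}$ for $k \geq 0$. On $\mathcal{W}_k$ the penalty satisfies $\pen{v}{n^d|R^*|} \geq \sqrt{2vk\log 2}$, while the intrinsic Gaussian metric of the process $R^* \mapsto |W(R^*)|/\sqrt{|R^*|}$ is dominated by $2^{k/2}\rho^*$; Assumption \ref{ass:v}'s packing bound $\mathcal{K}(\sqrt{2^{-k}u}, \rho^*, \mathcal{W}_k) \leq k_1 u^{-k_2} 2^{kv}$ fed into Dudley's entropy integral then yields $\mathbb{E}[\sup_{\mathcal{W}_k} |W(R^*)|/\sqrt{|R^*|}] = O(\sqrt{vk})$. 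Borell--TIS concentration produces exponentially-in-$k$ decaying tails for $\sup_{\mathcal{W}_k}\bigl[|W(R^*)|/\sqrt{|R^*|} - \pen{v}{n^d|R^*|}\bigr]$, and Borel--Cantelli concludes that only finitely many shells contribute past any threshold, so $M < \infty$ a.s. Non-degeneracy is immediate: for any fixed $R^* \in \mathcal{R}^*$ with $|R^*| > 0$, the single term $|W(R^*)|/\sqrt{|R^*|} - \pen{v}{n^d|R^*|}$ has a non-degenerate Gaussian-based law and lower-bounds $M$.

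The principal obstacle I foresee is the tightness of constants in the chaining step: the Dudley bound on shell $\mathcal{W}_k$ scales as $\sqrt{vk}$ while the subtracted penalty scales as $\sqrt{2vk\log 2}$, so the positive gap through which Borell--TIS must push the tails to summability depends delicately on having Assumption \ref{ass:v}'s refined packing bound rather than a crude covering estimate. This is precisely where the paper's insistence on taking $v$ as the packing parameter (not the VC-dimension) begins to pay off, and explains why non-degenerate weak convergence hinges on the sharper complexity control.
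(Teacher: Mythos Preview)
Your direct white-noise coupling breaks down at small scales for $d\geq 2$, and this is the crux of the theorem. Concretely, when $|R^*|$ is near the minimal allowed scale $r_n/n^d$, the boundary estimate $|\bar R\triangle R^*|=O(n^{-1})$ from Assumption~\ref{ass:R_complex}(b) is not small relative to $|R^*|$ itself: the normalized discrepancy $|\bar R|^{-1/2}\,|W(\bar R)-W(R^*)|$ has standard deviation of order
\[
\frac{|\bar R\triangle R^*|^{1/2}}{|\bar R|^{1/2}}\;\asymp\;\frac{n^{-1/2}}{(r_n/n^d)^{1/2}}\;=\;\frac{n^{(d-1)/2}}{r_n^{1/2}},
\]
which diverges under the LSB $r_n\sim\log^{12}n$ as soon as $d\geq 2$. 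The same blow-up hits the penalty difference, since $|\log|\bar R|-\log|R^*||\lesssim n^{-1}/(r_n/n^d)=n^{d-1}/r_n$. So neither a Gaussian maximal inequality over polynomially many indices nor a Lipschitz bound on $\pen{v}{\cdot}$ rescues the step ``$M_n\leq M+\oP(1)$'' at these scales. This is exactly the $n^{d-1}$ obstruction that makes KMT-type strong couplings unusable here (cf.\ \eqref{eq:KMT_r_n}); your argument reintroduces it through the back door.

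The paper avoids the issue by decoupling the two limits. It first proves an invariance principle $Z_n\Rightarrow W$ on $(\mathcal R^*,\rho^*)$ (Lemma~\ref{prop.invariance}) and a continuous-mapping lemma (Lemma~\ref{lem:cont}) for the functional $h^c$ restricted to scales $|R^*|\geq c^d$ with $c>0$ \emph{fixed}; at such scales the discretization error is genuinely $o(1)$, yielding $M_n(\mathcal R_{n|(cn)^d},v)\toD M(\mathcal R^*_{|c^d},v)$. The passage to $r_n$-scales is then a pure sandwich: monotonicity in the index set gives $\limsup_n\P{}{M_n(\mathcal R_{n|r_n})\leq t}\leq \P{}{M(\mathcal R^*_{|c^d})\leq t}$ for every $c$, and $c\searrow 0$ closes one side, while the reverse inequality is argued directly. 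No uniform control at the smallest scales is ever needed.

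For the a.s.\ boundedness, the paper simply verifies the three hypotheses of \citet[Thm.~6.1]{ds01} using Assumption~\ref{ass:v}; this packages precisely the shell-by-shell chaining plus concentration you sketch, with the constants already tracked. Your worry about the $\sqrt{vk}$ versus $\sqrt{2vk\log 2}$ margin is well founded---the implicit constant in Dudley's bound would have to be beaten---and invoking the black-box result sidesteps it.
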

{Note that our proof of Theorem \ref{thm:weak_limit} explicitly requires the VC-dimension $\nu\left(\mathcal R^*\right)$ to be finite, and it is not clear if this Assumption could be dropped.}

{\begin{example}[Gaussian approximation in the hyperrectangle / hypercube case]\label{ex:cub_rect}
Recall Example \ref{ex:sets} and let $\mathcal S^*$ be the set of all hyperrectangles and $\mathcal Q^*$ be the set of all hypercubes in $\left[0,1\right]^d$. 
Then for any sequence $r_n$ satisfying the LSB \eqref{eq:r_n} the approximation \eqref{eq:approx_in_prob} holds under $\mathbb P_0$ for $\mathcal S_{n | r_n}$ and $\mathcal Q_{n | r_n}$, respectively. Monte-Carlo simulations (by means of \eqref{eq:Gaussian_approximation} with $n = 128$ and $d = 2$) of the densities of $M_n$ with different values of $v$ are shown in Figure \ref{fig:distributions}. The smallest possible values of $v$ which we may choose according to Example \ref{ex:sets} are $v = 3 +\epsilon$ and $v = 1$, respectively. The corresponding results are depicted in the first picture of of Figure \ref{fig:distributions} with $\epsilon = 0$ for simplicity. Alternatively, we may use the VC-dimensions $\nu\left(\mathcal S^*\right) = 4$ and $\nu\left(\mathcal Q^*\right) = 3$ respectively, which lead to the simulated {densities of $M_n$} shown in the bottom row of Figure \ref{fig:distributions}. Note that the distributions of $M_n \left(\mathcal S^n,4\right)$ and $M_n \left(\mathcal Q_n,3\right)$ are extremely close, which somewhat contradicts the intuition that detection in the less complex system of squares should be notably easier than detection in the system of all rectangles. The explanation for this is that $v = 3$ clearly overpenalizes the system $\mathcal Q_n$ of squares. In contrast, if the penalization is chosen according to the smallest possible values satisfying Assumption \ref{ass:v} (which allows for minimax detection in the system of squares, cf. Corollary \ref{cor:power} below), then the densities differ substantially.

\begin{figure}[!htb]
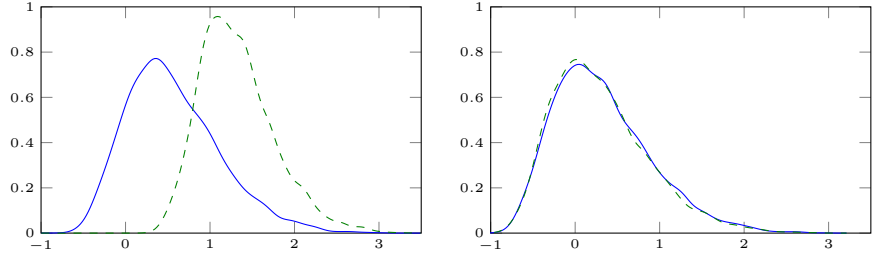

\setlength\fheight{3cm} \setlength\fwidth{5cm}
\centering
\tiny
\begin{tabular}{cc}
\input{comparison.tikz} & \input{comparison_VC.tikz}
\end{tabular}
\caption{Simulated densities of the Gaussian approximations, displayed by a standard kernel estimator obtained from $10^4$ runs of the test statistic \eqref{eq:Gaussian_approximation} ($M_n \left(\mathcal S^n,v\right)$ (\ref{rectangles}), $M_n \left(\mathcal Q_n,v\right)$ (\ref{squares})).
Left: optimal calibration with the covering number $v = 3$ and $v = 1$, respectively. 
Right: alternative calibration using the VC-dimension $\nu\left(\mathcal S^*\right) = 4$ and $\nu\left(\mathcal Q^*\right) = 3$.}
\label{fig:distributions}
\end{figure}
\end{example}}

{
\begin{remark}(Beyond exponential families)\label{rem:model}
\begin{enumerate}
\item[a)] Obviously, $\theta_0 \in \Theta$ can be replaced by a field $\left(\theta_i\right)_{i \in I_n^d}$ of known baseline parameters.
\item[b)] The proofs of Theorem \ref{thm:approximation} and Theorem \ref{thm:weak_limit} rely on a third-order Taylor expansion of $T_R$ and on the sub-exponential tails of the random variables $Y_i$, but not explicitly on the exponential family structure. Therefore, if in more general models corresponding assumptions are posed \citep[see also][Sec. 2.2]{arias2017distribution}, our results do immediately generalize to the case that the observations are not drawn from an exponential family as in \eqref{eq:model}. As an example, suppose our observations are drawn from the Weibull distribution with fixed scale parameter $\lambda>0$ and variable shape parameter $\theta>0$, i.e.
\begin{equation}\label{eq:weibull}
f_\theta(x)= \left(\frac{\theta}{\lambda} \right) \left(\frac{x}{\lambda}\right)^{\theta-1} \exp \left( -\left(\frac{x}{\lambda} \right)^\theta\right), \qquad x\geq0. 
\end{equation}
It is well-known that $\left\{f_\theta\right\}_{\theta >0}$ is not an exponential family. However, it is clear from \eqref{eq:weibull} that the likelihood-ratio test statistics $T_R$ are arbitrary smooth, i.e. a third order Taylor expansion is valid. If we restrict to $\theta \geq 1$ (non-decreasing failure rate), we immediately obtain sub-exponential tails, the MLE is unique and for $\theta \geq 2$ one also has asymptotic normality \citep[see e.g.][]{s85,fb97}. As a consequence, a similar coupling result as in Theorem \ref{lemma:Chernozhukov_NEF} below is possible, which would yield analogs to Theorems \ref{thm:approximation} and \ref{thm:weak_limit} also in this non exponential family situation. We emphasize that also Theorem \ref{thm:asymptPower} below can be generalized accordingly.
\end{enumerate}
\end{remark}
}

\subsection{Asymptotic power}\label{subsec:power}
In this section we will analyze the power of our multiscale testing approach in the hypercube-case. The detection power clearly depends on the size and strength of the anomaly. To describe the latter, we will frequently employ the functions
\[
m(\theta):= \psi'(\theta)= \E{}{Y}, \qquad v(\theta):= \psi''(\theta)= \V{Y}
\]
for $Y \sim F_\theta$. 

\paragraph{Heuristics}
The key point for the following power considerations is that the observations in \eqref{eq:model} can be approximated as 
\begin{equation}\label{eq:signal_decomp}
{\frac{Y_i - m\left(\theta_0\right)}{\sqrt{v\left(\theta_0\right)}} =  \frac{m\left(\theta_i\right) - m\left(\theta_0\right)}{\sqrt{v\left(\theta_0\right)}} + \frac{\sqrt{v\left(\theta_i\right)}}{\sqrt{v\left(\theta_0\right)}} \frac{Y_i - m\left(\theta_i\right)}{\sqrt{v\left(\theta_i\right)}},}
\end{equation}
i.e. as 'signal' $v\left(\theta_0\right)^{-1/2} \left(m\left(\theta_i\right)- m\left(\theta_0\right)\right)$, which is non zero on the anomaly only, plus a standardized noise component $\left(Y_{i} - m\left(\theta_i\right)\right)/\sqrt{v\left(\theta_i\right)}$ which is scaled by a factor $v_i:= \sqrt{v\left(\theta_i\right)/v\left(\theta_0\right)}$. In case of Gaussian observations with variance $1$, one has $v_i \equiv 1$ and recovers the situation considered by \citet{sac16}. Whenever the 'signal' part in \eqref{eq:signal_decomp} is strong enough, the anomaly should be detected. In the following, we will make this statement mathematically precise and also give a comparison of the multiscale testing procedure with an oracle procedure.

\paragraph{Considered alternatives}

Consider a given family $\left(Q_n^*\right)_{n\in \mathbb N}$ of hypercube anomalies $Q_n^* \subset\left[0,1\right]^d$ with Lebesgue measure $\left|Q_n^*\right| = a_n \in \left(0,1\right)$. The corresponding discretized anomalies $Q_n:= I_n^d \cap nQ_n^* \subset I_n^d$ have size $\left|Q_n\right| \sim n^d a_n$. We will consider alternatives $K_{i,n}$ in \eqref{eq:multiple_testing} where $\theta^n \in \Theta^{n^d}$ s.t.
\begin{equation}\label{eq:anomaly}
\theta_i^n= \theta_1^n \mathbb{I}_{Q_n} {+\theta_0 \mathbb{I}_{Q_n^c}}.
\end{equation}
The parameters $\theta_1^n$ determine the total strength of the anomaly, which is given by
\[
\mu^n \left(Q_n\right) := \sqrt{\left|Q_n\right|} \frac{m\left(\theta_1^n\right) - m\left(\theta_0\right)}{\sqrt{v\left(\theta_0\right)}}.
\]
Clearly, any anomaly with fixed size or strength can be detected with asymptotic probability $1$. Therefore, we will consider vanishing anomalies in the sense that
\begin{equation}\label{eq:asymptotics}
a_n \searrow 0, \qquad \mu^n \left(Q_n\right) \nearrow \infty, \qquad\text{as}\qquad n \to \infty.
\end{equation}
Furthermore, we will restrict to parameters $\theta_1^n$ in \eqref{eq:anomaly} which yield uniformly bounded variances and uniform sub-exponential tails {for the standardized version}, this is
\begin{align}
\E{\theta}{\exp\left({s\frac{Y-m(\theta)}{\sqrt{v(\theta)}}}\right)} \leq & \,C \text{ for all } 0 \leq s \leq t \text{ and } \theta \in \left\{\theta_0\right\} \cup \bigcup\limits_{n \in \N} \left\{\theta_1^n\right\}, \label{eq:MGF}\\
{\underline{v} \leq} &{\sqrt{\frac{v\left(\theta_1^n\right)}{v\left(\theta_0\right)}} \leq \bar v \qquad\text{for all}\qquad n \in \mathbb N}\label{eq:variances}
\end{align}
for $Y\sim F_\theta$ with constants $t>0, C>0$ and $0 < \underline{v} < \bar v < \infty$.

In case of Gaussian observations with variance $\sigma^2$, \eqref{eq:MGF} and \eqref{eq:variances} are obviously satisfied, for a Poisson field this means that the intensities are bounded away from zero and infinity.

\paragraph{Oracle and multiscale procedure}

Recall that $\mathcal Q^*$ is the set of all hypercubes in $\left[0,1\right]^d$ (cf. Example \ref{ex:sets}), and $\mathcal Q^n$ its discretization (cf. \eqref{eq:defi_R_n}). 

If the size $a_n$ of the anomaly is known, but its position is still unknown, then one would naturally restrict the set of candidate regions to $\mathcal R^*_{\mathrm O} := \left\{Q^* \in \mathcal Q^* ~\big|~ \left|S^*\right| = a_n\right\}$, and consequently scan only over (cf. \eqref{eq:defi_R_n})
\[
\mathcal R_n^{\mathrm O} := \left\{Q \subset I_n^d ~\big|~ Q = I_n^d \cap nQ^* \text{ for some }Q^* \in \mathcal R^*_{\mathrm O} \right\}.
\]
As for the true anomaly $Q^* \in \mathcal R^*_{\mathrm O}$, its discretized version $Q_n$ also satisfies $Q_n \in \mathcal R_n^{\mathrm O}$. This gives rise to an oracle test, which rejects whenever $T_n\left(Y, \theta_0,\mathcal R_n^{\mathrm O},v\right)  > q_{1-\alpha,n}^{\mathrm O}$ where $q_{1-\alpha,n}^{\mathrm O}$ is the ${(}1-\alpha{)}${-}quantile of $M_n \left(R_n^{\mathrm O},v\right)$ as in \eqref{eq:Gaussian_approximation}. Similar as in Theorem \ref{thm:approximation} one can show that this quantile sequence ensures the oracle test to have asymptotic level $\alpha$. The asymptotic power of this oracle test can be seen as a benchmark for any multiscale test.

To obtain a competitive multiscale procedure, let us choose some $r_n$ satisfying the LSB \eqref{eq:r_n}, and furthermore assume that $r_n = o\left(n^d a_n\right)$, as otherwise the multiscale procedure will never be able to detect the true anomaly (as it is not contained in the set of candidate regions which we scan over). As now position and size of the anomaly are unknown, we consider all such sets in $\mathcal R^*_{\mathrm{MS}} = \mathcal Q^*$ as candidate regions and consequently scan over 
\[
\mathcal R_{n | r_n}^{\mathrm{MS}} := \left\{Q \subset I_n^d ~\big|~ Q = I_n^d \cap nQ^* \text{ for some }Q^* \in \mathcal Q^* \text{ and } \left|Q \right| \geq r_n \right\}.
\]
Clearly the true anomaly $Q^*$ satisfies $Q^* \in \mathcal R^*_{\mathrm{MS}}$, and by $r_n = o\left(n^d a_n\right)$ its discretized version $Q_n$ also satisfies $Q_n \in \mathcal R_{n| r_n}^{\mathrm{MS}}$. This gives rise to a multiscale test, which rejects whenever  $T_n\left(Y, \theta_0,\mathcal R_{n|r_n}^{\mathrm{MS}},v\right)  >q_{1-\alpha,n}^{\mathrm{MS}}$ where $q_{1-\alpha,n}^{\mathrm{MS}} := q_{1-\alpha}^{M_n\left(\mathcal R_{n | r_n}^{\mathrm{MS}},v\right)}$ is the ${(}1-\alpha{)}${-}quantile of $M_n \left(\mathcal R_{n| r_n}^{\mathrm{MS}},v\right)$ as in \eqref{eq:Gaussian_approximation}. Theorem \ref{thm:approximation} ensures that the multiscale test has asymptotic level $\alpha$.

Now, due to Theorem \ref{thm:weak_limit} $q_{1-\alpha}^* := q_{1-\alpha}^{M \left(\mathcal Q^*,v\right)} < \infty$ whenever {$v$ satisfies Assumption \ref{ass:v} (which corresponds to $v \geq 1$ here)}, it holds {that}
\[
q_{1-\alpha,n}^{\mathrm O} \leq q_{1-\alpha,n}^{\mathrm{MS}} \leq q_{1-\alpha}^* < \infty
\]
for all $n \in \mathbb N$.

\paragraph{Asymptotic power}

We will now show that the multiscale procedure described above (which requires no a priori knowledge on the scale of the anomaly) asymptotically detects the same anomalies with power $1$ as the oracle benchmark procedure for a known scale. Hence, the penalty choice to calibrate all scales as in \eqref{MultiscaleStat} (where $\mathcal R^* = \mathcal Q^*$), renders the adaptation to all scales for free, at least asymptotically. This can be seen as a structural generalization of \citep[Thms. 2 and 4]{sac16}, as under the alternative the whole distribution in \eqref{eq:model} and not just its mean might change. Also the power considerations in \citet{proksch2016multiscale} restrict to this simpler case. 

\begin{theorem}\label{thm:asymptPower}
In the setting described above, let $a_n \searrow 0 $ be a sequence of scales such that $\left(\log n\right)^{12}/n^d = o\left( a_n\right)$ as $n \to \infty$. Denote by 
\[
F \left(x,\mu,\sigma^2\right) := \Phi\left(-\frac{x + \mu}{\sigma}\right) + \Phi\left(\frac{\mu-x}{\sigma}\right), \qquad x \geq 0
\]
the survival function of a folded normal distribution with parameters $\mu \in \R$ and $\sigma^2>0$, where $\Phi$ is the cumulative distribution function of $\mathcal N \left(0,1\right)$. Let furthermore {$v \geq 1$}. If \eqref{eq:asymptotics} is satisfied, then the following holds true:
\begin{itemize}
\item[(a)] The single scale procedure has asymptotic power
\begin{multline*}
\P{\theta^n}{T_n\left(Y, \theta_0,\mathcal R_n^{\mathrm O},v\right)  > q_{1-\alpha,n}^{\mathrm O}}\\=\alpha + (1-\alpha)F\left(q_{1-\alpha,n}^{\mathrm O} + \sqrt{2v\log\left(\frac{1}{a_n}\right)},n^{d/2}\sqrt{a_n}\frac{m\left(\theta_1^n\right) - m\left(\theta_0\right)}{\sqrt{v\left(\theta_0\right)}},\frac{v\left(\theta_1^n\right)}{v\left(\theta_0\right)}\right) +o(1).
\end{multline*}
\item[(b)] If $a_n = o\left(n^{{\beta}-d}\right)$ with {$\beta>0$} sufficiently small, then the multiscale procedure has asymptotic power
\begin{multline*}
\P{\theta^n}{T_n\left(Y, \theta_0,\mathcal R_{n|r_n}^{\mathrm{MS}},v\right) >q_{1-\alpha,n}^{\mathrm{MS}}}\\\geq \alpha + (1-\alpha)F\left(q_{1-\alpha,n}^{\mathrm{MS}} + \sqrt{2v\log\left(\frac{1}{a_n}\right)},n^{d/2}\sqrt{a_n}\frac{m\left(\theta_1^n\right) - m\left(\theta_0\right)}{\sqrt{v\left(\theta_0\right)}},\frac{v\left(\theta_1^n\right)}{v\left(\theta_0\right)}\right) +o(1).
\end{multline*}
\end{itemize}
\end{theorem}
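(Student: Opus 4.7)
The overarching strategy is that both tests are driven by two essentially independent contributions: the behavior of $T_{Q_n}$ (which sees the signal) and the behavior of $\max_{R:R\cap Q_n=\emptyset}[T_R-\pen{v}{|R|}]$ (which sees only null observations). Controlling these two pieces separately and combining via independence yields the claimed $\alpha+(1-\alpha)F$ form.

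\textbf{Step 1: Signal contribution at $Q_n$.} Exactly as in the proof of Theorem \ref{thm:approximation}, a third-order Taylor expansion of $T_{Q_n}^2$ at $\theta_0$ (using $m(\hat\theta)=\bar Y_{Q_n}$ and the regularity of the exponential family) gives
\[
T_{Q_n}(Y,\theta_0) = \left|\frac{1}{\sqrt{|Q_n|\,v(\theta_0)}}\sum_{i\in Q_n}(Y_i - m(\theta_0))\right| + \oP(1),
\]
with the cubic error controlled through the sub-exponential moment bound \eqref{eq:MGF} and $|Q_n|\sim n^d a_n\to\infty$ (guaranteed by $(\log n)^{12}/n^d=o(a_n)$). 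Decomposing according to \eqref{eq:signal_decomp}, the right-hand side equals $|\mu^n(Q_n)+\sigma_n W_n|+\oP(1)$, where $\sigma_n:=\sqrt{v(\theta_1^n)/v(\theta_0)}$ and $W_n:=(|Q_n|v(\theta_1^n))^{-1/2}\sum_{i\in Q_n}(Y_i-m(\theta_1^n))$. A Berry--Esseen bound, uniform in $\theta_1^n$ thanks to \eqref{eq:MGF} and \eqref{eq:variances}, gives $W_n\toD\mathcal N(0,1)$. Combined with the elementary expansion $\pen{v}{|Q_n|}=\sqrt{2v\log(1/a_n)}+o(1)$ (valid because $|Q_n|\sim n^d a_n$ and $a_n\to 0$), this yields
\[
\mathbb P_{\theta^n}\!\bigl[T_{Q_n}-\pen{v}{|Q_n|}>q\bigr] = F\!\bigl(q+\sqrt{2v\log(1/a_n)},\,\mu^n(Q_n),\,\sigma_n^2\bigr) + o(1),
\]
uniformly in $q$ over any compact set; in particular this covers both $q_{1-\alpha,n}^{\mathrm O}$ and $q_{1-\alpha,n}^{\mathrm{MS}}$, each bounded above by $q_{1-\alpha}^*$ via Theorem \ref{thm:weak_limit}.

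\textbf{Step 2: Noise outside $Q_n$ and independence.} Under $\mathbb P_{\theta^n}$ the sub-field $(Y_i)_{i\notin Q_n}$ is i.i.d.\ $F_{\theta_0}$, so the statistic $T_n^{\mathrm{out}}:=\max_{R:R\cap Q_n=\emptyset}[T_R-\pen{v}{|R|}]$ has the same distribution under $\mathbb P_{\theta^n}$ as under $\mathbb P_0$ (restricted to these regions). In the oracle case there are $\sim n^d$ candidate cubes and only $O(|Q_n|)=o(n^d)$ of them meet $Q_n$; in the multiscale case the hypothesis $a_n=o(n^{\beta-d})$ with $\beta$ small ensures that the proportion of elements of $\mathcal R_{n|r_n}^{\mathrm{MS}}$ intersecting $Q_n$ is negligible. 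In either situation removing these from the max changes the distribution only asymptotically negligibly (applying Theorem \ref{thm:approximation} to the restricted system), so that $\mathbb P_{\theta^n}[T_n^{\mathrm{out}}>q_{1-\alpha,n}^{\cdot}]=\alpha+o(1)$. Crucially, $T_{Q_n}$ depends only on $(Y_i)_{i\in Q_n}$ while $T_n^{\mathrm{out}}$ depends only on $(Y_i)_{i\notin Q_n}$, so the events $A_n:=\{T_{Q_n}-\pen{v}{|Q_n|}>q\}$ and $B_n:=\{T_n^{\mathrm{out}}>q\}$ are independent.

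\textbf{Step 3: Conclusion and main obstacle.} Since $T_n\geq\max(T_{Q_n}-\pen{v}{|Q_n|},\,T_n^{\mathrm{out}})$ one has $\{T_n>q\}\supseteq A_n\cup B_n$ and therefore
\[
\mathbb P_{\theta^n}[T_n>q] \geq 1-(1-\mathbb P[A_n])(1-\mathbb P[B_n]) = \alpha+(1-\alpha)F+o(1),
\]
which proves (b). For (a), the reverse inequality requires showing that the contribution $C_n$ from cubes $R\in\mathcal R_n^{\mathrm O}\setminus\{Q_n\}$ that partially overlap $Q_n$ is asymptotically negligible, i.e.\ $\mathbb P_{\theta^n}[C_n\setminus(A_n\cup B_n)]=o(1)$. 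This is the principal technical obstacle: on such $R$ the signal is present but strictly smaller than $\mu^n(Q_n)$, namely $(|R\cap Q_n|/|Q_n|)\mu^n(Q_n)$, while the number of such $R$ at a given overlap level grows only polynomially. A careful union bound combining the Gaussian coupling of Theorem \ref{thm:approximation} with the sub-exponential tails shows that the overlap maximum cannot exceed $q_{1-\alpha,n}^{\mathrm O}+\pen{v}{|Q_n|}$ outside $A_n\cup B_n$ with asymptotic probability one, matching the lower and upper bounds and yielding the equality in (a). A secondary difficulty is ensuring that the Berry--Esseen estimate in Step 1 is sufficiently uniform in the triangular array $(\theta_1^n)$; this is where the assumptions \eqref{eq:MGF} and \eqref{eq:variances} are essential.
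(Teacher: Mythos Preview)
Your overall strategy—separating the contribution of $Q_n$ from that of non-intersecting regions and exploiting independence—matches the paper's, but Step 2 has a genuine gap.

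For the multiscale case, your claim that ``the proportion of elements of $\mathcal{R}_{n|r_n}^{\mathrm{MS}}$ intersecting $Q_n$ is negligible'' is false: cubes of side $\ell$ intersecting $Q_n$ number $\sim(\ell+|Q_n|^{1/d})^d$, and summing over $\ell$ gives order $n^{d+1}$, the same as the total; indeed every sufficiently large cube meets $Q_n$. Hence $\mathbb{P}[T_n^{\mathrm{out}}>q]=\alpha+o(1)$ is unjustified, and applying Theorem~\ref{thm:approximation} to the restricted system only yields a Gaussian approximation for $T_n^{\mathrm{out}}$, not that its quantile coincides with $q$. The paper circumvents this by \emph{not} discarding all intersecting cubes: it retains in $\mathcal{T}'$ those $Q$ with $\mu^n(Q)<\delta_n\mu^n(Q_n)$ for a suitable $\delta_n\searrow 0$, and shows (Lemma~\ref{lem:distribution_unchanged_MS}(b)) that the partial sum over $Q\cap Q_n$ is uniformly $\oP(1)$, so that $\gamma(Q)$ is asymptotically a function of $(X_i)_{i\notin Q_n}$ alone and the quantile is preserved. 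The genuinely removed set $\mathcal{V}$ then contains only cubes of size at most $\delta_n^{-2}|Q_n|=o(n^\beta)$, whose penalized maximum is shown negligible via a Dudley entropy bound (Lemmas~\ref{lem:coveringnumberV} and \ref{lem:distribution_unchanged_MS}(a))—this is precisely where the smallness of $\beta$ enters.

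Even in the oracle case your counting argument, though numerically correct, is insufficient: the naive bound on a maximum of $O(|Q_n|)$ standard Gaussians is $\sqrt{2\log|Q_n|}$, which is not in general dominated by the penalty $\sqrt{2v\log(1/a_n)}$. The paper instead obtains the sharper $\OP(1)$ bound via Dudley's entropy integral (Lemma~\ref{lem:distribution_unchanged_oracle}(a)), exploiting that same-size cubes near $Q_n$ have covering number in the canonical metric bounded \emph{independently of $n$} (Lemma~\ref{lem:coveringnumberU}). The upper bound in (a) that you only sketch likewise hinges on this entropy control, combined with the paper's $\varepsilon_n$-splitting of the neighborhood into $\mathcal{U}_n$ (near-maximal signal, independent of $\mathcal T$) and $\mathcal{U}\setminus\mathcal{U}_n$ (strictly smaller signal, shown not to exceed $q^*$).
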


\begin{remark}
In \citep{sac16} a similar result in case of Gaussian observations is shown. {However, the proof of \citep[Thm. 4]{sac16} is incomplete and we require the additional condition that $a_n = o\left(n^{{\beta}-d}\right)$ with {$\beta>0$} sufficiently small for our proof.} 
In \citep{proksch2016multiscale} it suffices to assume $a_n \searrow 0$, as {large scales have been excluded s.t.} the maximum {tends to a} Gumbel-limit.  
\end{remark}

The above Theorem allows us to explicitly describe those anomalies which will be detected with asymptotic power $1$:
\begin{cor}\label{cor:power}
Under the setting in this section, the Assumptions of Theorem \ref{thm:asymptPower} {and if $v$ satisfies Assumption \ref{ass:v}}, any such anomaly is detected with asymptotic power $1$ either by the single scale or the multiscale testing procedure if and only if
\begin{align}\label{powercond}
\frac{\sqrt{2{v}\log\left(\frac{1}{a_n}\right)v\left(\theta_0\right)} - n^{d/2}\sqrt{a_n}\left|m\left(\theta_1^n\right) - m\left(\theta_0\right)\right|}{\sqrt{v\left(\theta_1^n\right)}} \to -\infty
\end{align}
as $n \to \infty$.
\end{cor}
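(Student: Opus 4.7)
The plan is to unpack Theorem \ref{thm:asymptPower} and recast the condition ``asymptotic power $\to 1$'' as a quantitative condition on the arguments of the folded-normal survival function $F$.

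First I would bound the quantiles uniformly in $n$: by Theorem \ref{thm:weak_limit} and Assumption \ref{ass:v}, $M(\mathcal{Q}^*, v)$ is a.s.\ finite, so $q_{1-\alpha}^* := q_{1-\alpha}^{M(\mathcal{Q}^*, v)} < \infty$; since $\mathcal{R}_n^{\mathrm{O}}, \mathcal{R}_{n|r_n}^{\mathrm{MS}} \subset \mathcal{Q}_n$ and the statistic is monotone in the scanning family, the corresponding quantiles satisfy $q_{1-\alpha,n}^{\mathrm{O}}, q_{1-\alpha,n}^{\mathrm{MS}} \leq q_{1-\alpha}^*$, hence are $O(1)$ as $n \to \infty$.

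Next I would set
\[
x_n = q_{1-\alpha,n} + \sqrt{2v\log(1/a_n)}, \qquad \mu_n = n^{d/2}\sqrt{a_n}\,\frac{m(\theta_1^n)-m(\theta_0)}{\sqrt{v(\theta_0)}}, \qquad \sigma_n = \sqrt{v(\theta_1^n)/v(\theta_0)},
\]
so that Theorem \ref{thm:asymptPower} gives asymptotic power $\alpha + (1-\alpha) F(x_n, \mu_n, \sigma_n^2) + o(1)$, with equality in the oracle case and as a lower bound in the multiscale case (the latter is exactly what is needed for the ``if'' direction at power $1$). Because \eqref{eq:variances} ensures $\underline{v} \leq \sigma_n \leq \bar v$ and because $a_n \to 0$ implies $x_n \to \infty$, the smaller of the two $\Phi$-summands of $F$ is bounded above by $\Phi(-x_n/\sigma_n) \to 0$ irrespective of the sign of $\mu_n$, the terms entering symmetrically through $|\mu_n|$. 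Therefore
\[
F(x_n, \mu_n, \sigma_n^2) = \Phi\!\left(\frac{|\mu_n|-x_n}{\sigma_n}\right) + o(1),
\]
so the asymptotic power tends to $1$ if and only if $(|\mu_n|-x_n)/\sigma_n \to \infty$.

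Finally, since the bounded quantile $q_{1-\alpha,n}$ is absorbed by the diverging penalty $\sqrt{2v\log(1/a_n)}$, and $\sigma_n$ is bounded away from zero and infinity, the condition $(|\mu_n|-x_n)/\sigma_n \to \infty$ is equivalent to
\[
\frac{n^{d/2}\sqrt{a_n}\,|m(\theta_1^n)-m(\theta_0)| - \sqrt{2v\log(1/a_n)\,v(\theta_0)}}{\sqrt{v(\theta_1^n)}} \to \infty,
\]
after multiplication by $\sqrt{v(\theta_0)}$; multiplying by $-1$ yields exactly \eqref{powercond}. The same chain of equivalences, read backwards, gives the ``only if'' direction for the oracle procedure. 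There is no genuinely hard step; the only care needed is in the handling of both signs of $\mu_n$ and in verifying that the bounded quantile contribution is negligible against the diverging penalization, both of which are routine given the uniform bounds on $\sigma_n$ from \eqref{eq:variances}.
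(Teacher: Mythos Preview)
Your proposal is correct and follows essentially the same route as the paper: bound the quantiles by $q_{1-\alpha}^{M(\mathcal Q^*,v)}$, reduce the power condition to $F(x_n,\mu_n,\sigma_n^2)\to 1$, and observe that one of the two $\Phi$-summands vanishes since $x_n\to\infty$, leaving $\Phi((|\mu_n|-x_n)/\sigma_n)\to 1$; the paper phrases the last step via the lower bound $F(x,\mu,\sigma^2)\geq\max\{\Phi((-x-\mu)/\sigma),\Phi((\mu-x)/\sigma)\}$ and an ``or'' condition, which is just your $|\mu_n|$ formulation unpacked by sign.
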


{
\begin{remark}
\eqref{powercond} implies that a smaller value of $v$ makes more anomalies detectable. However, this is limited by Assumption \ref{ass:v}, which requires $v$ to be an upper bound of the complexity of $\mathcal Q^*$ in terms of the packing number. As we compute in Appendix \ref{appA}, this yields $v = 1$ as the optimal choice.
\end{remark}
}

\begin{example}
\begin{enumerate}
\item In case of Gaussian observations $Y_i \sim \mathcal N \left(\Delta_n \mathbb{I}_{Q_n},\sigma^2\right)$ with variance $\sigma^2$, where the baseline mean is $0$ and $\Delta_n$ the size of the anomaly, this yields detection if and only if
\[
\left|\Delta_n\right| n^{d/2}\sqrt{a_n} \succsim \sigma \sqrt{2 v \log\left(\frac{1}{a_n}\right)} \qquad\text{as}\qquad n \to \infty.
\]
If we calibrate the statistic with $v = 1$ (cf. Example \ref{ex:sets}), then this coincides with the well known asymptotic detection boundary for hypercubes, see e.g. \citet{CastroDonoho1,frick2014multiscale} for $d = 1$, \citet{bi13} for $d = 2$, or \citet{kou2017identifying} for general $d$.
\item For Bernoulli r.v.'s $Y_i \sim Ber\left(p_0 \mathbb I_{Q_n^c} + p_n\mathbb{I}_{Q_n}\right)$ with $p_0, p_n \in (0,1)$ s.t. $p_0+p_n\leq 1$, the condition \eqref{powercond} reads as follows:
\begin{align*}
\frac{\sqrt{2{v}p_0(1-p_0) \log\left(\frac{1}{a_n}\right)} - n^{d/2}\sqrt{a_n}\left|p_n - p_0\right|}{\sqrt{p_n(1-p_n)}} \to -\infty.
\end{align*}
Note, that the minimax detection rate is unknown in this case to best of our knowledge.
\item For a Poisson field $Y_i \sim \text{Poi} \left(\lambda_0 \mathbb I_{Q_n^c} + \lambda_n\mathbb{I}_{Q_n}\right)$ with $\lambda_0, \lambda_n>0$, Theorem \ref{thm:asymptPower} and Corollary \ref{cor:power} can only be applied if $\lambda_n$ is a bounded sequence. In this case, \eqref{powercond} reduces to
\begin{align*}
\frac{\sqrt{2{v}\lambda_0 \log\left(\frac{1}{a_n}\right)} - n^{d/2}\sqrt{a_n}\left|\lambda_n - \lambda_0\right|}{\sqrt{\lambda_n}} \to -\infty.
\end{align*}
Again, the minimax detection rate is unknown in this case to best of our knowledge.
\end{enumerate}
\end{example}

{
\section{Numerical simulations}\label{sec:numerics}

In this section we provide an implementation of the suggested multiscale testing procedure and discuss its computational complexity. Furthermore we explore the influence of the penalization parameter $v$ in \eqref{MultiscaleStat} on the finite sample power, the speed of convergence in \eqref{eq:weak_limit} and the influence of the LSB $r_n$ on the distribution of $T_n$ in \eqref{MultiscaleStat}. 

\subsection{Implementation and computational complexity}\label{sec:implementation}

To evaluate the statistic $T_n$ in \eqref{MultiscaleStat} in general, all local statistics $T_R$ have to be computed separately. Therefore, the computational complexity will in general be of the order $\mathcal O \left(\# \mathcal R_n \cdot n^d\right)$. Note that for the situations mentioned in Example \ref{ex:models}, each $T_R$ is given by a function of the local mean $\bar Y_R$, which already reduces the computational effort.

However, if the system of candidate regions $\mathcal R^*$ has a special convolution-type structure, a more efficient evaluation is possible. Therefore, assume that there is a global shape $B \subset I_n^d$ such that for every $R \in \mathcal R_n$ there exist $t,h \in I_n^d$ with $t_i + h_i \leq n$ for all $1 \leq i \leq d$ and $\mathbf 1_R \left(x\right) = \mathbf 1_B \left((x-t)/h\right)$. This is e.g. the case for the system of hyperrectangles or the system of hypercubes. In this special situation, we may use the fast Fourier transform (FFT). If we denote by $\ast$ a discrete convolution, then it holds that
\[
\bar Y_{R} = Y \ast \mathbf 1_{B} \left(\frac{\cdot -t}{h}\right) = \mathcal F^{-1} \left(\mathcal F \left(Y\right) \cdot \mathcal F\left(\mathbf 1_{B} \left(\frac{\cdot -t}{h}\right)\right)\right).
\]
Consequently, for a fixed scale $h$, all corresponding values $T_R$ can be computed by means of $3$ FFTs. Note that no zero-padding is necessary here as for an inverse problem \citep[see][]{proksch2016multiscale}. This gives a computational complexity of $\mathcal O \left(d\, \# \text{scales } n^d \log n\right)$ for a single evaluation of the test statistic $T_n$ in \eqref{MultiscaleStat}. In the hyperrectangle and hypercube case, using all possible scales, this yields $\mathcal O \left(dn^{2d} \log n\right)$ and $\mathcal O \left(dn^{d+1}\log n\right)$ respectively. Compared the naive implementation described at the beginning, which yield complexities $\mathcal O \left(n^{3d}\right)$ and $\mathcal O \left(n^{2d+1}\right)$, respectively, this is a significant improvement.

We also briefly mention a possible implementation using cumulative sums, which is also possible for hyperrectangles and hypercubes. Once the cumulative sum of all observations has been computed, each local mean $\bar Y_R$ can be computed summing or subtracting $2^d$ values. Hence, this implementation gives in in general a computational complexity of $\mathcal O \left(n^d + 2^d \cdot \# \mathcal R_n\right)$, which yields $\mathcal O \left(2^d n^{2d}\right)$ and $\mathcal O \left(2^dn^{d+1}\right)$ for hyperrectangles and hypercubes respectively. Compared to the implementation using FFT described above, this differs by a factor $d 2^{-d} \log n$, which reveals the FFT implementation to be more efficient for large $d$. 

Note that in many applications, a priori information is available, which allows to select a (small) subset of scales instead of using all, which clearly reduces the computational effort further.

We emphasize that the quantiles $q_{1-\alpha,n}$ of the approximating Gaussian version \eqref{eq:Gaussian_approximation} can be universally pre-computed and stored as long as $n$ and the system $\mathcal R_n$ do not change, and for large $n$ the asymptotic values can be used in a universal manner (cf. Subsection \ref{subsec:asympt} below). Even for small values of $\alpha$, the above implementation allows to simulate the $(1-\alpha)$-quantile of the Gaussian approximation \eqref{eq:Gaussian_approximation} efficiently. This makes fast computations on incoming data sets in an 'online' fashion possible, which is important in many applications. In contrast, permutation based methods as considered in \citep{arias2017distribution} require to simulate the unknown null distribution separately for every given problem instance. 

\subsection{Influence of $v$ on the power}\label{subsec:v}

To study the influence of the penalization parameter $v$ on the power of the procedure, we turn to the setting of Section \ref{subsec:power}. Let $n = 512$ and $d = 2$. For simplicity, we consider a Gaussian model, i.e. $F_\mu = \mathcal N \left(\mu,1\right)$ in \eqref{eq:model} and choose $\mu_i= \mu \mathbb{I}_{Q}$ with $\mu \in \left\{1,1.2\right\}$ and $\left|Q\right| \in \left\{6^2,7^2\right\}$. Afterwards, we simulate the empirical power from 1000 repetitions. This procedure is performed for the VC-based choice $v = 3$ and for the capacity-based choice $v = 1$, which is asymptotically minimax optimal (see Corollary \ref{cor:power}). The results are depicted in Table \ref{tab:power}.
\begin{table}[!htb]
\centering
\begin{tabular}{cc}
\begin{tabular}{c|c|c}
\toprule[2pt]
$\left|Q\right|$ and $\mu$ & $1$ & $1.2$ \\
\midrule[1pt]
$6^2$ & $0.429$ & $0.817$\\
\midrule[1pt]
$7^2$ & $0.809$ & $0.983$\\
\bottomrule[2pt]
\end{tabular}
&
\begin{tabular}{c|c|c}
\toprule[2pt]
$\left|Q\right|$ and $\mu$ & $1$ & $1.2$ \\
\midrule[1pt]
$5^2$ & $0.104$ & $0.182$\\
\midrule[1pt]
$6^2$ & $0.187$ & $0.577$\\
\bottomrule[2pt]
\end{tabular} \\[0.2cm]
$v = 1$ & $v = 3$
\end{tabular}
\caption{Empirical power of the investigated testing procedure for different choices of $v$ in different Gaussian settings determined by $\mu$ (columns) and $\left|Q\right|$ (rows).}
\label{tab:power}
\end{table}

We find that the power for $v = 1$ is substantially larger than the one obtained by using the VC-dimension for calibration. This is in line with our findings from Example \ref{ex:cub_rect}.

\subsection{Speed of convergence in \eqref{eq:weak_limit}}\label{subsec:asympt}

To investigate the speed of convergence in \eqref{eq:weak_limit}, we consider the system of hypercubes $\mathcal R_n = \mathcal Q_n$ as in Subsection \ref{subsec:v}. Figure \ref{fig:densities} shows estimated densities of $M_n$ for different values of $n$ in dimensions $d = 1$ and $d = 2$. 

\begin{figure}[!htb]
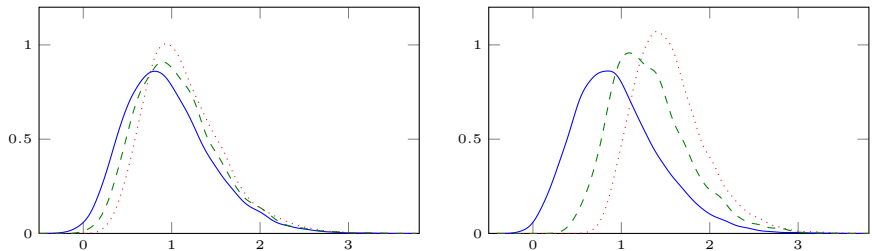

\setlength\fheight{3cm} \setlength\fwidth{5cm}
\centering
\tiny
\begin{tabular}{cc}
\input{densities_1d.tikz} & \input{densities_2d.tikz}
\end{tabular}
\caption{Simulated densities of the Gaussian approximations, displayed by a standard kernel estimator obtained from $10^4$ runs of the test statistic \eqref{eq:Gaussian_approximation}, with different values of $n$ and $d$.
Left: $d = 1$ and $M_n \left(\mathcal Q_n,1\right)$ with $n = 2^{10}$ (\ref{10}), $n = 2^{12}$ (\ref{12}) and $n = 2^{14}$ (\ref{14}).
Right: $d = 2$ and $M_n \left(\mathcal Q_n,1\right)$ with $n = 2^{5}$ (\ref{5}), $n = 2^{7}$ (\ref{7}) and $n = 2^{9}$ (\ref{9}).}
\label{fig:densities}
\end{figure}

We find that the speed of convergence of $M_n$ towards the weak limit $M$ in \eqref{eq:Gaussian_limit} decreases with increasing $d$, but we can however conclude that the distribution of $M_n$ stabilizes already at moderate values of $n$. This is especially helpful in situations, where data with significantly larger sample size $n$ is given, such that the distribution of $T_n$ cannot be simulated anymore.

\subsection{Influence of the lower scale bound $r_n$}

Let us again consider $n = 512$, $d = 2$ and the system of hypercubes $\mathcal R_n = \mathcal Q_n$ as in Subsection \ref{subsec:v}. Let $Y_i \sim \text{Bin}\left(1,\theta\right)$ and $\theta_0 = 0.5$. Figure \ref{fig:r_n} shows the simulated densities of $T_n \left(Y, \theta_0, \mathcal R_{n | r_n}, v\right)$ for different values of $r_n$.

\begin{figure}[!htb]
\setlength\fheight{3cm} \setlength\fwidth{6cm}
\centering
\tiny
\input{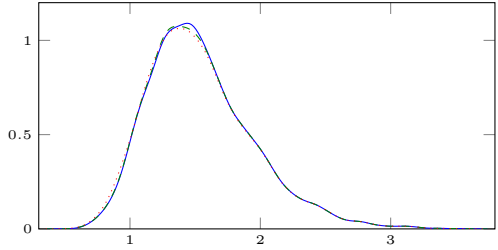} 
\caption{Simulated densities of the test statistic $T_n \left(Y, \theta_0, \mathcal R_{n | r_n}, v\right)$ in \eqref{MultiscaleStat} in case of i.i.d. Bernoulli observations with $p = 1/2$ for different values of $r_n$: $r_n = 2^3$ (\ref{23}), $r_n = 2^4$ (\ref{24}), $r_n = 2^5$ (\ref{25}). }
\label{fig:r_n}
\end{figure}

In conclusion, we find that the distribution of $T_n \left(Y, \theta_0, \mathcal R_{n | r_n}, v\right)$ is surprisingly robust w.r.t.\ the choice of $r_n$ even below the LSB \eqref{eq:r_n}.
}

\section{Auxiliary results}

In this section, we will present the main ingredients needed for our proofs, which might be of independent interest. One tool is a coupling result which allows us to replace the maximum over partial sums of standardized NEF r.v.'s by a maximum over a corresponding Gaussian version. This can be obtained from recent results by \citet{chernozhukov2014gaussian} as soon as certain moments can be controlled, which is the purpose of the following two {lemmas, which generalize known bounds for sub-Gaussian random variables to sub-exponential ones}. In what follows, the letter $C>0$ denotes some constant, which might change from line to line. 

{The following lemma gives an upper bound for the maximum of} uniformly sub-exponential random variables:
\begin{lem}\label{prop.mean_of_max_potenz_general}
Let $W_i,  ~i=1,2, \ldots$ be independent sub-exponential random variables s.t. there exist $k_1>1$ and $ k_2 >0$ s.t. 
\begin{align}\label{eq.glmSubExpTails}
\P{}{|W_i|>t} \leq k_1 \exp(-k_2 t)
\end{align} for all $i$. 
Then for all $m \in \mathbb{N}$ there exists a constant $C,$ s.t. for all $N\geq 2$ 
\begin{align*}
\E{}{\max\limits_{1\leq i \leq N} |W_i|^m} \leq C \left(\log N\right)^m.
\end{align*}
\end{lem}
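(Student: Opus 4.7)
The plan is to use the layer-cake representation together with a union bound, splitting the tail integral at a threshold of order $\log N$. Starting from
\[
\E{}{\max_{1\leq i \leq N}|W_i|^m} = \int_0^\infty m s^{m-1}\,\P{}{\max_{1\leq i \leq N}|W_i| > s}\,\mathrm{d}s,
\]
I would choose the threshold $s_0 := 2\log(N)/k_2$. On $[0,s_0]$ I bound the probability trivially by $1$, which yields the contribution $s_0^m = (2/k_2)^m (\log N)^m$. On $(s_0,\infty)$ I apply a union bound with the assumption \eqref{eq.glmSubExpTails} to get $\P{}{\max_i|W_i|>s}\leq N k_1 \exp(-k_2 s)$.

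For the tail part, the choice of $s_0$ is designed so that the prefactor $N$ is absorbed: for $s \geq s_0$ one has $\log N \leq k_2 s/2$, hence
\[
N\exp(-k_2 s) = \exp\bigl(\log N - k_2 s\bigr) \leq \exp(-k_2 s /2).
\]
Therefore
\[
\int_{s_0}^\infty m s^{m-1} N k_1 \exp(-k_2 s)\,\mathrm{d}s \leq m k_1 \int_0^\infty s^{m-1} \exp(-k_2 s/2)\,\mathrm{d}s = C(m,k_2) < \infty,
\]
which is a constant depending only on $m$ and $k_2$ (a Gamma integral).

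Combining the two parts, for all $N \geq 2$,
\[
\E{}{\max_{1\leq i \leq N}|W_i|^m} \leq (2/k_2)^m (\log N)^m + C(m,k_2).
\]
Since $\log N \geq \log 2 > 0$ for $N \geq 2$, the constant term can be absorbed by enlarging the prefactor, giving the claim with a suitable $C = C(m,k_1,k_2)$.

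The main (mild) obstacle is the proper choice of the split point: picking $s_0$ too small leaves the factor $N$ uncompensated in the tail integral, while picking it too large inflates the deterministic part. The choice $s_0 \asymp \log(N)/k_2$ is the right scale because the exponential tail \eqref{eq.glmSubExpTails} is precisely the threshold at which a union bound over $N$ variables ceases to be effective, and this scaling is exactly what produces the $(\log N)^m$ rate. No sub-Gaussian concentration or chaining is required since we only need a bound on moments of the maximum, not on the deviation from the mean.
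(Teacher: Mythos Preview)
Your proposal is correct and follows essentially the same route as the paper: layer-cake representation, split at a threshold of order $\log(N)/k_2$, trivial bound below the threshold, and union bound with the tail assumption above it. The only cosmetic difference is in handling the tail integral---the paper chooses $\bar t = h^{-1}(1/N)$ and invokes integration by parts, whereas you pick $s_0 = 2\log(N)/k_2$ and absorb the factor $N$ into the exponential to reduce to a Gamma integral; both yield the same $(\log N)^m$ bound.
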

{Lemma \ref{prop.mean_of_max_potenz_general} might be of independent interest, as it generalizes the well known bound} 
\begin{equation}\label{eq:Gaussian_max}
\E{}{\max\limits_{1\leq i \leq N} |X_i|} \leq C \sqrt{\log N}
\end{equation}
for sub-Gaussian random variables {to sub-exponential random variables}.

Now we will show that the maximum over the partial sum process of independent random variables can be bounded by the maximum over the corresponding Gaussian version. The latter can be controlled as in \eqref{eq:Gaussian_max} by exploiting the fact that a maximum over dependent Gaussian random variables is always bounded by a maximum over corresponding independent Gaussian random variables \citep[see e.g.][]{vsidak1967rectangular} 
\begin{equation}\label{eq:Gaussian_partial_sums_max}
\E{}{ \max\limits_{I \in \mathcal{I}} \frac{\left|X_I \right|}{\sqrt{|I|}}} \leq C \sqrt{\log\left(\#\left(\mathcal I\right)\right)}
\end{equation}
with $X_i \stackrel{\text{i.i.d.}}{\sim} \mathcal N \left(0,1\right)$ and $X_I:= \sum\limits_{i \in I} X_i$. This allows us to prove the following:
\begin{lem}\label{prop:partialsum} 
Let $(Z_i)_{i=1, \ldots, N}$ be independent random variables with $\E{}{Z_i} =0$ and denote $Z_I:= \sum\limits_{i \in I} Z_i$. If $\mathcal{I}$ is an arbitrary index set of sets $\{I\}_{I \in \mathcal{I}}$, then there exists a constant $C>0$ independent of $\mathcal{I}$ s.t.
\begin{align*}
\E{}{ \max\limits_{I \in \mathcal{I}} \frac{\left|Z_I \right|}{\sqrt{|I|}}} \leq C \sqrt{\log\left(\#\left(\mathcal I\right)\right)} ~\E{}{ \max\limits_{1 \leq i \leq N} \left| Z_i\right| }.
\end{align*}
\end{lem}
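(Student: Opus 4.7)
The plan is to prove the lemma via a two-step reduction: first a symmetrization argument that replaces the original $Z_i$ by Gaussian-weighted versions, and then a conditional Gaussian maximal inequality that uses the bound \eqref{eq:Gaussian_partial_sums_max} in a scale-aware way.

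\textbf{Step 1 (symmetrization).} Let $(\epsilon_i)_{i=1,\ldots,N}$ be i.i.d.\ Rademacher signs, and let $(g_i)_{i=1,\ldots,N}$ be i.i.d.\ standard Gaussian, both independent of $(Z_i)$. Since $\mathbb E[Z_i]=0$, the standard symmetrization inequality yields
\[
\E{}{\max_{I \in \mathcal I} \frac{|Z_I|}{\sqrt{|I|}}} \le 2\, \E{}{\max_{I \in \mathcal I} \frac{\bigl|\sum_{i \in I}\epsilon_i Z_i\bigr|}{\sqrt{|I|}}}.
\]
Applying the classical comparison between Rademacher and Gaussian averages (using $\mathbb E|g_i| = \sqrt{2/\pi}$ and conditioning on $(Z_i)$) gives
\[
\E{}{\max_{I \in \mathcal I} \frac{\bigl|\sum_{i \in I}\epsilon_i Z_i\bigr|}{\sqrt{|I|}}} \le \sqrt{\tfrac{\pi}{2}}\, \E{}{\max_{I \in \mathcal I} \frac{\bigl|\sum_{i \in I} g_i Z_i\bigr|}{\sqrt{|I|}}}.
\]

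\textbf{Step 2 (conditional Gaussian maximum).} Condition on $(Z_i)$. Then $G_I := |I|^{-1/2}\sum_{i \in I} g_i Z_i$ is a centered Gaussian with
\[
\V{G_I} = \frac{1}{|I|}\sum_{i \in I} Z_i^2 \le \max_{1 \le i \le N} Z_i^2.
\]
By \eqref{eq:Gaussian_partial_sums_max} (applied after standardization, i.e.\ write $G_I = \max_i |Z_i| \cdot \tilde G_I$ with $\V{\tilde G_I} \le 1$, so that the Sidak-type reduction to independent Gaussians still yields the $\sqrt{\log\#\mathcal I}$ bound on $\mathbb E[\max_I |\tilde G_I|]$), there exists $C>0$ independent of $\mathcal I$ such that
\[
\E{g}{\max_{I \in \mathcal I} |G_I| \,\Big|\, Z} \le C\, \sqrt{\log(\#\mathcal I)}\; \max_{1 \le i \le N} |Z_i|.
\]
Taking expectation over $Z$ and combining with Step~1 yields the claim.

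\textbf{Main obstacle.} The only genuinely subtle point is the conditional Gaussian maximal inequality: \eqref{eq:Gaussian_partial_sums_max} is stated for the specific partial sums of i.i.d.\ standard normals, while after conditioning we face centered Gaussians of \emph{variable} (data-dependent) variance bounded uniformly by $\max_i Z_i^2$. Justifying that the same $\sqrt{\log\#\mathcal I}$ bound applies — i.e.\ that the bound scales with the maximal standard deviation — requires a reduction to the standardized family, which is exactly what the Sidak-type comparison underlying \eqref{eq:Gaussian_partial_sums_max} provides. Everything else (symmetrization and Gaussian-Rademacher comparison) is classical and contributes only constants.
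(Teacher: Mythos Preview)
Your proof is correct and closely parallels the paper's, but the order of the two key reductions is reversed. The paper symmetrizes, then applies the Ledoux--Talagrand contraction principle (their Theorem~4.4) at the \emph{Rademacher} level to factor out $\max_j|\widetilde Z_j|$, and only afterwards compares Rademacher sums to Gaussian sums (their Lemma~4.5) so that \eqref{eq:Gaussian_partial_sums_max} can be applied verbatim to the i.i.d.\ standard-normal partial sums. You instead go Rademacher $\to$ Gaussian first and then, conditioning on $Z$, factor out $\max_i|Z_i|$ via the elementary variance bound $\V{G_I}\le\max_i Z_i^2$.

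What each buys: your route avoids the contraction principle entirely and reduces Step~2 to the most basic Gaussian maximal inequality (for any centered Gaussian family with variances $\le 1$, $\E\max_j|G_j|\le C\sqrt{\log p}$), which is more elementary than the \v{S}id\'ak reference the paper invokes around \eqref{eq:Gaussian_partial_sums_max}. The paper's route has the minor advantage that after contraction the Gaussian object is literally the one in \eqref{eq:Gaussian_partial_sums_max}, so no ``standardization'' discussion is needed. Both arguments yield the same constant up to absolute factors, and neither genuinely requires \v{S}id\'ak --- a union bound on Gaussian tails suffices for the conditional step you flag as the main obstacle.
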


\begin{theorem}[Coupling]\label{lemma:Chernozhukov_NEF}
Let $Z_i, i \in I_n^d$ independent, $\E{}{Z_i}=0, \V{Z_i}=1$, such that \eqref{eq.glmSubExpTails} is satisfied for all $i$ with uniform constants  $k_1>1$ and $ k_2 >0$. Let furthermore {$a_i, i \in I_n^d$ with $0 < \inf a_i \leq \sup a_i < \infty$} independent of $i$ and $n$, and $X_i \overset{i.i.d.}{\sim} \mathcal{N}(0,1), i=1, \ldots, n^d,$ and $\mathcal R_n$, s.t. inequality \eqref{eq:finite_cardinality} holds. 
Then
\begin{align*}
\max\limits_{\substack{R \in \mathcal R_n:\\ |R| \geq r_n}}{|R|^{-1/2}}  \sum_{i \in R} {a_i} Z_i- \max\limits_{\substack{R \in \mathcal R_n:\\ |R| \geq r_n}} |R|^{-1/2} \sum_{i \in R} {a_i} X_i = \OP \left(\left( \frac{\log^{10}(n)}{r_n}\right)^{1/6} \right).
\end{align*}
\end{theorem}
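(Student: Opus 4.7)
The plan is to recast the claim as a quantile-coupling problem for maxima of sums of independent high-dimensional random vectors, and to invoke the Chernozhukov--Chetverikov--Kato (CCK) machinery (used e.g.\ in Proposition~4.2 of \citet{proksch2016multiscale}). First I enumerate $\mathcal R_{n|r_n} = \{R_1,\dots,R_M\}$; by Assumption~\ref{ass:R_card} we have $M \leq c_1 n^{c_2}$. For each $i \in I_n^d$ I define $M$-dimensional random vectors with coordinates
\[
(W_i)_j := \frac{a_i\,\mathbb 1_{\{i \in R_j\}}}{\sqrt{|R_j|}}\,Z_i, \qquad (\widetilde W_i)_j := \frac{a_i\,\mathbb 1_{\{i \in R_j\}}}{\sqrt{|R_j|}}\,X_i,
\]
so that the left-hand and right-hand maxima in the statement equal $\max_j \sum_i (W_i)_j$ and $\max_j \sum_i (\widetilde W_i)_j$, respectively. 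Crucially, $\widetilde W_i$ is a deterministic-vector multiple of the single standard Gaussian $X_i$ and therefore has the same covariance as $W_i$, so identifying the CCK dummy Gaussian with $(X_i)_i$ will be possible without altering the joint law.

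Next I would verify CCK's standard hypotheses on the family $(W_i)_i$. Centering is automatic. For the coordinate variance I would use
\[
\mathrm{Var}\!\left(\textstyle\sum_i (W_i)_j\right) = \frac{1}{|R_j|}\sum_{i\in R_j} a_i^2 \in [\underline{a}^2,\bar a^2],
\]
which is uniformly bounded above and below thanks to $0 < \inf_i a_i \leq \sup_i a_i < \infty$. For the envelope I would use $\max_{i,j}|(W_i)_j| \leq (\sup_i|a_i|/\sqrt{r_n})\,\max_i |Z_i|$, combined with the uniform sub-exponential tail \eqref{eq.glmSubExpTails} and Lemma~\ref{prop.mean_of_max_potenz_general}, which give $\mathbb E[\max_i |Z_i|^k] \leq C_k (\log n)^k$ for every fixed $k$. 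Hence the relevant CCK envelope has effective scale of order $(\log n)/\sqrt{r_n}$ and the dimension satisfies $\log(Mn^d) \leq (c_2+d)\log n$.

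Applying the CCK coupling on a possibly enlarged probability space then yields independent $N(0,1)$ variables $X'_i$ for which
\[
\left|\max_j \sum_i (W_i)_j - \max_j \sum_i \frac{a_i\,\mathbb 1_{\{i \in R_j\}}}{\sqrt{|R_j|}}\,X'_i \right|
= O_{\mathbb P}\!\left(\left(\frac{\log^{10}(n)}{r_n}\right)^{1/6}\right).
\]
Because each Gaussian coordinate vector is a.s.\ rank one in the single variable $X'_i$, the $X'_i$ can be identified in distribution with the $X_i$, recovering exactly the Gaussian maximum stated in the theorem.

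The main technical obstacle is the careful tracking of log-factors to land on the exponent $10$: it requires combining the $\log^7(Mn^d)$ factor from CCK's Gaussian comparison together with the envelope moment contribution of order $(\log n)^2$ from Lemma~\ref{prop.mean_of_max_potenz_general}, plus a small extra log from a truncation step needed to pass from the sub-exponential tails to the effectively bounded setting required inside CCK's proof. A secondary difficulty is to ensure that CCK's lower variance bound on each coordinate of the sum holds uniformly in $j$; this reduces to $\inf_i a_i > 0$ together with $|R_j|\geq r_n$ and is therefore guaranteed by the hypotheses.
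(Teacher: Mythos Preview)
Your high-level route---vectorize over $\mathcal R_{n|r_n}$ and apply the CCK coupling of \citet{chernozhukov2014gaussian}---is exactly the paper's. The setup with $(W_i)_j=a_i\,\mathbb 1_{\{i\in R_j\}}|R_j|^{-1/2}Z_i$ and the rank-one identification of the Gaussian counterpart are correct, and your treatment of the envelope/third-moment quantity (the paper's $B_2$) via Lemma~\ref{prop.mean_of_max_potenz_general} is the right idea and is in fact the term that produces the exponent $10$.

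The genuine gap is the covariance-fluctuation term. The coupling result the paper invokes (\citet[Cor.~4.1]{chernozhukov2014gaussian}) has the probability bound
\[
\mathbb P\bigl[|Z-\tilde Z|>16\delta\bigr]\ \lesssim\ \delta^{-2}\bigl\{B_1+\delta^{-1}(B_2+B_4)\log n\bigr\}\log n+\tfrac{\log n}{n^d},
\]
and the term
\[
B_1\ =\ \mathbb E\Bigl[\max_{j,k}\Bigl|\sum_i\bigl((W_i)_j(W_i)_k-\mathbb E[(W_i)_j(W_i)_k]\bigr)\Bigr|\Bigr]
\ =\ \mathbb E\Bigl[\max_{j,k}\frac{1}{\sqrt{|R_j||R_k|}}\Bigl|\sum_{i\in R_j\cap R_k}a_i^2(Z_i^2-1)\Bigr|\Bigr]
\]
is \emph{not} an envelope condition and cannot be read off from the ingredients you list. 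Controlling it is precisely why Lemma~\ref{prop:partialsum} (the symmetrization/contraction bound for maxima of normalized partial sums) was developed: one first extracts a factor $r_n^{-1/2}$ from $\sqrt{|R_j\cap R_k|/(|R_j||R_k|)}$, then applies Lemma~\ref{prop:partialsum} to $V_i=a_i^2(Z_i^2-1)$ over the index sets $R_j\cap R_k$, and finally uses Lemma~\ref{prop.mean_of_max_potenz_general} on $\max_i|Z_i|^2$ to land on $B_1\lesssim(\log^5(n)/r_n)^{1/2}$. Your proposal invokes only Lemma~\ref{prop.mean_of_max_potenz_general}; without Lemma~\ref{prop:partialsum} (or an equivalent argument) the $B_1$ bound---and hence the coupling---does not go through. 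Relatedly, your mention of a ``lower variance bound on each coordinate'' suggests you may be thinking of one of CCK's CLT/anti-concentration formulations rather than their Corollary~4.1; the latter has no such hypothesis, but it does require the explicit $B_1,B_2,B_4$ bounds.
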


{
\begin{remark}
Note that Theorem \ref{lemma:Chernozhukov_NEF} requires only $\log^{10} (n) = o(r_n)$ for convergence in probability, whereas we require an exponent of $12$ in the LSB \eqref{eq:r_n}. The reason is, that Theorem \ref{lemma:Chernozhukov_NEF} yields a coupling for the unpenalized partial sums, whereas Theorems \ref{thm:approximation} and \ref{thm:weak_limit} work with penalized partial sums. Including the penalty term requires an additional slicing argument, which results in an additional $2$ in the exponent (see the proof of Theorem 5 in the supplement).
\end{remark}
}

\section{Proofs}

In this section we will give all proofs. In the following we will denote by $p_n$ the cardinality of $\mathcal R_n$, i.e. $p_n:= \#(\mathcal R_n)$, which by \eqref{eq:finite_cardinality} satisfies $\log(p_n) \sim \log n$. Recall that $C$ denotes a generic constant which might differ from line to line.

\subsection{Proof of the auxiliary results}

We start with proving the auxiliary statements from section {2}.

\begin{proof}[Proof of Lemma \ref{prop.mean_of_max_potenz_general}]
Let $h(t):=k_1 \exp(-k_2 t)$, then
\begin{align*}
\P{}{\max\limits_{1 \leq i \leq N} |W_i| >t} &= 1- \P{}{\max\limits_{1 \leq i \leq N} |W_i| \leq t} \leq 1 - \left( 1- h(t)\right)^N\leq Nh(t).
\end{align*}
Let $ \bar{t}= h^{-1}(1/N){\sim} C \log(N)$, then
\begin{align*}
\E{}{\max\limits_{1 \leq i \leq N} |W_i|^m }&= m \int_0^\infty t^{m-1} \P{}{\max\limits_{1 \leq i \leq N} |W_i| >t} \,\mathrm dt\\
&\leq m \int_0^{\bar{t}} t^{m-1}\,\mathrm dt + m \int_{\bar{t}}^\infty t^{m-1}N h(t) \,\mathrm dt\\
&\leq \left( C \log(N) \right)^m + k_1mN \int_{\bar{t}}^\infty t^{m-1} \exp(-k_2 t)\,\mathrm dt\\
&\leq C \left( \log N\right)^m,
\end{align*}
where the last inequality follows from integration by parts.

\end{proof}

\begin{proof}[Proof of Lemma \ref{prop:partialsum}]
Let $X_i \stackrel{\text{i.i.d.}}{\sim} \mathcal N \left(0,1\right)$ and $r_i$ be i.i.d. Rademacher random variables, i.e. they take the values $\pm 1$ with probability $1/2$.\\
\textit{Step (i): }
Since the $X_i$ are symmetric 
\begin{align}\label{ineq.symmetric}
\E{}{\max\limits_I \frac{1}{\sqrt{|I|}} \left| \sum\limits_{i \in I} X_i\right|} = \E{r}{\E{}{\max\limits_I \frac{1}{\sqrt{|I|}} \left| \sum\limits_{i \in I} r_i X_i\right|}}.
\end{align}
By Lemma 4.5 of \citep{ledoux1991probability} and choosing $F(t)=t, ~\eta_i = X_i$ and $x_i:= \left(c_{i,I}\right)_I$, where $c_{i,I}:=  \frac{1}{\sqrt{|I|}} \mathbb{I}_{\{i \in I\}}$ (a scaled indicator function) and as norm the $\max-$norm, we obtain
\begin{align}\label{eq: RademacherMeanEstimation}
\E{r}{\max\limits_I \frac{1}{\sqrt{|I|}} \left| \sum_{i \in I} r_i \right|} \leq \sqrt{\frac{\pi}{2}} \E{}{\max\limits_I \frac{1}{\sqrt{|I|}} \left|\sum_{i \in I} X_i\right|}.
\end{align}
\textit{Step (ii):} 
Let $(Z_i')_{1 \leq i \leq N}$ be a sequence of independent copies of $(Z_i)_{1 \leq i \leq N}$ and define the symmetrized version of $Z_i$ by $\widetilde{Z_i}:= Z_i-Z_i'$ and equally the symmetrized version of $Z_I$ by $\widetilde{Z_I} := \sum_{i \in I} (Z_i - Z_i')$. Then by using the same argument as in \eqref{ineq.symmetric} and Fubini's theorem, we derive
\begin{align*}
\E{}{\max\limits_I \frac{1}{\sqrt{|I|}} \left| Z_I\right|} &\leq 2\E{}{\max\limits_I \frac{1}{\sqrt{|I|}} \left| \widetilde{Z_I}\right|}\\
&=2 \E{r}{\E{}{\max\limits_I \frac{1}{\sqrt{|I|}} \left| \sum_{i \in I} \widetilde{Z_i} r_i\right|}}\\
&=2 \E{}{\E{r}{\max\limits_I \frac{1}{\sqrt{|I|}} \left| \sum_{i \in I} \widetilde{Z_i} r_i\right|}}\\
&= 2\E{}{\E{r}{\max\limits_I \frac{1}{\sqrt{|I|}} \left| \sum_{i \in I} \left| \widetilde{Z_i}\right| r_i\right|}},
\end{align*}
where the last equality holds in view of the symmetry of $r_i$. Now we will use the contraction principle, i.e. Theorem 4.4 of  \citep{ledoux1991probability} with $F(t)=t$ conditionally on $\alpha_i :=\frac{\left| \widetilde{Z_i}(\omega)\right|}{\max\limits_j | \widetilde{Z_j} (\omega)|}$, which is independent of $(r_i)$. 
By choosing $x_i$ as in \textit{Step (i)} we get (after multiplying both sides with $\max\limits_j |\widetilde{Z_j}(\omega)|$)
\begin{align*}
\E{r}{\max\limits_I \frac{1}{\sqrt{|I|}} \left| \sum_{i \in I} \left| \widetilde{Z_i}(\omega)\right|r_i\right|} \leq \E{}{\max\limits_{1\leq i  \leq N} \left|\widetilde{Z_i}(\omega) \right|} \E{r}{\max\limits_I \frac{1}{\sqrt{I}} \left| \sum_{i \in I} r_i\right|}.
\end{align*}
Therefore 
\begin{align*}
\E{}{\max\limits_I \frac{1}{\sqrt{|I|}} \left| Z_I\right|} &\leq 2\E{}{\max\limits_{1 \leq i \leq N} \left| \widetilde{Z_i}(\omega)\right|} \E{r}{\max\limits_I \frac{1}{\sqrt{|I|}} \left| \sum_{i \in I} r_i\right|}\\
&\leq 4 \E{}{\max\limits_{1 \leq i \leq N} \left| Z_i\right|} \sqrt{\frac{\pi}{2}}\E{}{\max\limits_I \frac{1}{\sqrt{|I|}} \left| \sum_{i \in I} X_i\right|}\\
&= \sqrt{8\pi} \E{}{\max\limits_{1 \leq i \leq N} \left| Z_i \right|} \E{}{\max\limits_I \frac{\left| X_I\right|}{\sqrt{|I|}}},
\end{align*}
where we used \eqref{eq: RademacherMeanEstimation} in the second inequality. Now the statement follows from \eqref{eq:Gaussian_partial_sums_max}.
\end{proof}

\begin{proof}[Proof of Theorem \ref{lemma:Chernozhukov_NEF}]
Enumerate each region in $\mathcal R_n$ by $j, 1 \leq j \leq p_n$ and define
\begin{align}
\begin{aligned}\label{def.Xij}
X_{ij} &:= \frac{{a_i}}{\sqrt{|R_j|}} Z_i \mathbb{I}_{\{i \in R_j\}} \mathbb{I}_{\{|R_j| \geq r_n\}},\\
X_i &:= \left( X_{ij}\right)_{j=1, \ldots, p_n}, \quad i=1, \ldots, N= n^d,
\end{aligned}
\end{align}
for some sequence $r_n$.
Then $Z:= \max\limits_{1 \leq j \leq p_n} \sum_{i=1}^N X_{ij}$ satisfies
\[
Z \stackrel{\mathcal D}{=} \max\limits_{\substack{R \in \mathcal R_n:\\ |R| \geq r_n}} \frac{1}{\sqrt{|R|}} \sum_{i \in R} {a_i} Z_i.
\]
Recall that $\log(p_n) \lesssim \log(n)$. According to \citep[][Cor. 4.1]{chernozhukov2014gaussian} we find that for every $\delta>0$ there exists a Gaussian version $\tilde{Z}:= \max\limits_{1 \leq j \leq p_n} \sum_{i=1}^N {a_i} N_{ij}$ with independent random vectors $N_1, \ldots, N_n$ in $\R^{p_n}$, $N_i \sim N(0, \E{}{X_i X_i^t}), 1\leq i \leq N$, such that
\begin{align*}
\P{}{\left| Z - \tilde{Z}\right|> 16 \delta} \lesssim \delta^{-2} \left\lbrace B_1 + \delta^{-1} (B_2+B_4) \log(n) \right\rbrace \log(n) + \frac{\log(n)}{n^d}
\end{align*}
where
\begin{align*}
B_1 &:= \E{}{\max\limits_{1\leq j, k \leq p_n} \left| \sum_{i=1}^N \left( X_{ij}X_{ik}-\E{}{X_{ij}X_{ik}}\right)\right|}\\
B_2 &:= \E{}{\max\limits_{1 \leq j \leq p_n} \sum_{i=1}^N \left| X_{ij}\right|^3}\\
B_4&:= \sum_{i=1}^N \E{}{\max\limits_{1 \leq j \leq p_n} \left| X_{ij}\right|^3 \mathbb{I}_{\{\max\limits_{1 \leq j \leq p_n} \left| X_{ij}\right| > \delta/ \log(p_n \vee n)\}}}.
\end{align*}
$B_1$ can be controlled as follows. With $X_{ij}$ from \eqref{def.Xij} we derive
\begin{align*}
B_1 &= \E{}{ \max\limits_{\substack{1 \leq j,k \leq p_n:\\ |R_j|, |R_k| \geq r_n}} \left| \sum_{i \in R_j \cap R_k}  \frac{{a_i^2}(Z_i^2-1)}{\sqrt{|R_j| |R_k|}}\right| }\\
&=\E{}{ \max\limits_{\substack{1 \leq j,k \leq p_n:\\ |R_j|, |R_k| \geq r_n}}\frac{\sqrt{|R_j \cap R_k|}}{\sqrt{|R_j| |R_k|}} \left| \frac{1}{\sqrt{|R_j \cap R_k|}} \sum_{i \in R_j \cap R_k}  {a_i^2}(Z_i^2-1) \right|  }.
\end{align*}
Using the restriction on the size of the rectangles we find: 
\begin{align*}
\sqrt{\frac{|R_j \cap R_k|}{|R_j| |R_k|}} \leq \sqrt{\frac{|\min\{|R_j|, |R_k|\}|}{|R_j| |R_k|}} \leq \frac{1}{\sqrt{r_n}}.
\end{align*}
Denote $V_i := {a_i^2}(Z_i^2 -1), ~I:= R_j \cap R_k \in \mathcal{I} \subset I_n^d$ and $	 S_I := \sum_{i \in I} V_i$.
Now
\begin{align*}
B_1 \leq \frac{1}{\sqrt{r_n}} \E{}{ \max\limits_{I \in \mathcal{I}} \frac{\left| S_I \right|}{\sqrt{|I|}}}.
\end{align*}
Using Lemma \ref{prop:partialsum} we obtain
\begin{align*}
B_1 \leq \frac{C}{\sqrt{r_n}} \underbrace{\sqrt{\log\left(\#(\mathcal{I}) \right)}}_{\sim \sqrt{\log(n)}} ~\E{}{\max\limits_{1 \leq i \leq N} |{a_i^2}(Z_i^2-1)| } 
\end{align*}
It remains to estimate
\begin{align*}
\E{}{\max\limits_{1 \leq i \leq N} |{a_i^2}(Z_i^2-1)| }  &= \E{}{\max\limits_{1 \leq i \leq N} {a_i^2}|Z_i^2-1| } \\
&\leq \E{}{\max\limits_{1 \leq i \leq N} {\overline{a}^2}|Z_i^2-1| }\\
&\leq {\overline{a}^2} \E{}{\max\limits_{1 \leq i \leq N} \left| Z_i \right|^2} +{\overline{a}^2}{,}
\end{align*} 
{where $\overline{a}:= \sup a_i.$}
So in total we get by Lemma \ref{prop.mean_of_max_potenz_general}
\begin{align*}
B_1 \lesssim \frac{\sqrt{\log(n)}}{\sqrt{r_n}} \left( {\overline{a}^2} C \log(N)^2 + {\overline{a}^2} \right)\lesssim  \left( \frac{\log^5 (n)}{r_n} \right)^{1/2}.
\end{align*}
For $B_2$ we compute
\[
\begin{aligned}
B_2 \leq \frac{1}{\left( r_n\right)^{1/2}} \E{}{\max\limits_{1 \leq i \leq N} \left| {a_i}Z_i\right|^3 }&\\
\leq \frac{{\overline{a}^3}}{\left( r_n\right)^{1/2}} & \E{}{\max\limits_{1 \leq i \leq N} \left|Z_i\right|^3 }
\lesssim \left( \frac{\log^6(n)}{r_n}\right)^{1/2},
\end{aligned}
\]
where we again used Lemma \ref{prop.mean_of_max_potenz_general}. Now let $\delta>0$ be fixed. Then
\begin{align*}
B_4 &\leq \sum\limits_{i=1}^N \E{}{ \max\limits_{\substack{1 \leq j \leq p_n:\\ |R_j|\geq r_n}} \frac{|{a_i} Z_i|^3}{|R_j|^{3/2}} \mathbb{I}_{\{\max\limits_j \frac{|{a_i} Z_i|}{|R_j|^{1/2}}> \frac{\delta}{\left( {\log p_n}\right)}\}} }\\
&\leq \frac{N}{r_n^{3/2}} \max\limits_{1 \leq i \leq N} \E{}{ |{a_i} Z_i|^3 \mathbb{I}_{\{|Z_i| > \frac{\delta r_n^{1/2}}{|{a_i}| {\log p_n}}\}} }
\end{align*}
Now let $r_n > \left(\frac{2d |{a_i}|}{\delta} \right)^2 (\log n)^{2+2\gamma}$ for some $\gamma>1$ for $n \geq n_o(\delta, d)$ and hence
\begin{align*}
B_4 &\leq \frac{N}{r_n^{3/2}}\max\limits_{1 \leq i \leq N} \E{}{  |{a_i}|^3 |Z_i|^3 \mathbb{I}_{\{|Z_i| > \left(\log n \right)^\gamma\}}}\\
&\leq  \frac{N {\overline{a}^3}}{r_n^{3/2}} 3 \max\limits_{1 \leq i \leq N} \int_{(\log n)^\gamma}^\infty t^2 \P{}{  |Z_i| > t } \,\mathrm dt\\
&\leq 3 k_1 \frac{N {\overline{a}^3}}{r_n^{3/2}} \int_{(\log n)^\gamma}^\infty t^2 \exp(- k_2 t) \,\mathrm dt\\
&= 3k_1 \frac{N {\overline{v}^3}}{r_n^{3/2}} \frac{1}{ k_2^{3}} \int_{k_2 (\log n)^{\gamma }}^\infty u^{2} \exp(-u) \,\mathrm du.
\end{align*}
For $u$ large enough s.t. $u^2 \leq \exp(u/2)$, i.e. for $ n \geq n_1(\delta, d, \gamma)$, it holds
\[
B_4 \leq \frac{3 k_1 {\overline{a}^3}}{k_2^3} \frac{N}{r_n^{3/2}} \int_{k_2 \left(\log n\right)^{\gamma}}^\infty \exp\left(- \frac{u}{2}\right) \,\mathrm du= \frac{3 k_1 {\overline{a}^3}}{k_2^3} \frac{N}{r_n^{3/2}} \exp\left( -\frac{k_2}{2} \left( \log n\right)^{\gamma }\right),
\]
and then furthermore $\frac{k_2}{2} \left( \log n\right)^{ \gamma} \geq \left(d \log n \right)$ which implies
\[
B_4 \leq \frac{3 k_1 {\overline{a}^3}}{k_2^3} \frac{n^d}{\left(r_n\right)^{3/2}} n^{-d} = \frac{3 k_1 {\overline{a}^3}}{k_2^3} \frac{1}{\left( r_n\right)^{3/2}}.
\]
In conclusion we obtain
\begin{align*}
\P{}{|Z-\tilde{Z}|> 16 \delta} \lesssim & \delta^{-2}\left( \frac{\log^7(n)}{r_n}\right)^{1/2} + \delta^{-3}\left( \frac{\log^{10}(n)}{r_n}\right)^{1/2} \\
& + \delta^{-3}\left( \frac{\log^4(n)}{r_n^3}\right)^{1/2} + \frac{\log(n)}{n^{d}},
\end{align*}
which yields the claim.
\end{proof}

\subsection{Proofs of Section \ref{subsec:limit}}
Let us now prove the results from Section \ref{subsec:limit}, including Theorems \ref{thm:weak_limit} and \ref{thm:approximation}. We start with a Taylor expansion of $T_n$, which will allow us to apply Theorem \ref{lemma:Chernozhukov_NEF}. 

\begin{lem}\label{prop.approx.hyperrectangles} 
Let $\mathcal R_n$ be a collection of sets s.t. \eqref{eq:finite_cardinality} holds, $\epsilon>0$ and $(r_n)_n\subset(0,\infty)$ be a sequence, s.t.\ $(\log n)^{10 + \epsilon}/r_n\rightarrow0$. Suppose $Y_{i}\sim F_{\theta_0} \in \mathcal{F}$, $i \in I_n^d$, are i.i.d. random variables, and recall that for $R \in \mathcal R_n$ we denote $\overline{Y}_R := \left|R\right|^{-1} \sum_{i \in R} Y_i$. Then it holds {that}
\[
\max_{\substack{R\in\mathcal R_n:\\|R|\geq r_n}}\left|T_R(Y,\theta_0)-|R|^{\frac{1}{2}}\frac{|\overline{Y}_R-m(\theta_0)|}{\sqrt{v(\theta_0)}}\right|= \OP\left( \left( \frac{\log^3 (n)}{r_n}\right)^{1/4}\right)
\]
as $n \to \infty$.
\end{lem}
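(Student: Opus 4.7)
The plan is to Taylor-expand the local likelihood ratio statistic around $\theta_0$ using the exponential family structure, translate the resulting remainder for $T_R^2$ into a bound for $|T_R - A_R|$ with $A_R := |R|^{1/2}|\overline Y_R - m(\theta_0)|/\sqrt{v(\theta_0)}$ via the elementary inequality $|\sqrt a - \sqrt b|\leq \sqrt{|a-b|}$, and finally control the cubic remainder uniformly in $R$ by a Bernstein-type concentration argument.

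First, I would exploit the fact that in a regular minimal exponential family the MLE over $R$ satisfies $\psi'(\hat\theta_R) = \overline Y_R$, so that $T_R^2/2 = |R|\, G(\hat\theta_R)$ with $G(\theta) := (\theta-\theta_0)\psi'(\theta) - \psi(\theta) + \psi(\theta_0)$. Differentiation gives $G(\theta_0) = G'(\theta_0) = 0$ and $G''(\theta_0) = v(\theta_0)$, and $G'''$ is continuous and hence bounded on any fixed compact neighborhood $U$ of $\theta_0$. On the event $\{\hat\theta_R\in U\}$ a third-order Taylor expansion yields $G(\hat\theta_R) = \tfrac12 v(\theta_0)(\hat\theta_R-\theta_0)^2 + O(|\hat\theta_R-\theta_0|^3)$. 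A second Taylor expansion for the smooth inverse $m^{-1}$ at $m(\theta_0)$ gives $\hat\theta_R - \theta_0 = v(\theta_0)^{-1}(\overline Y_R - m(\theta_0)) + O((\overline Y_R - m(\theta_0))^2)$. Combining both expansions yields, uniformly on $\{\hat\theta_R \in U\}$,
\begin{equation*}
T_R^2 = A_R^2 + O\!\left(|R|\,|\overline Y_R - m(\theta_0)|^3\right).
\end{equation*}
The inequality $|\sqrt a - \sqrt b|\leq \sqrt{|a-b|}$ for $a,b\geq 0$ then gives
\begin{equation*}
|T_R - A_R| \;\leq\; \sqrt{|T_R^2 - A_R^2|} \;\leq\; C\, |R|^{1/2}\,|\overline Y_R - m(\theta_0)|^{3/2}.
\end{equation*}

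It remains to (i) justify the use of the Taylor expansion by showing $\hat\theta_R \in U$ uniformly in $R$ with probability tending to one, and (ii) control the remainder uniformly. Writing $S_R := \sum_{i\in R}(Y_i - m(\theta_0))$, Bernstein's inequality for sub-exponential summands yields $\mathbb{P}(|S_R| > u\sqrt{|R|})\leq 2\exp(-cu^2)$ whenever $u$ lies in the sub-Gaussian range $u\lesssim\sqrt{|R|}$. A union bound over $\mathcal R_n$, whose cardinality is bounded by $c_1 n^{c_2}$ thanks to Assumption \ref{ass:R_card}, then gives
\begin{equation*}
\max_{R\in\mathcal R_n,\;|R|\geq r_n} |R|^{-1/2}|S_R| \;=\; \OP\!\bigl(\sqrt{\log n}\bigr),
\end{equation*}
which is legitimate since $r_n\gtrsim\log n$ under the standing hypothesis. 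In particular $\max_R|\overline Y_R-m(\theta_0)| \leq r_n^{-1/2}\cdot\OP(\sqrt{\log n}) = o_P(1)$, so $\hat\theta_R \in U$ uniformly with probability tending to one. Rewriting the bound of Step~2 as $|R|^{1/2}|\overline Y_R - m(\theta_0)|^{3/2} = (|R|^{-1/2}|S_R|)^{3/2}|R|^{-1/4}$ and using $|R|\geq r_n$ finally gives
\begin{equation*}
\max_{R}|T_R - A_R| \;\leq\; C\,r_n^{-1/4}\bigl(\max_R|R|^{-1/2}|S_R|\bigr)^{3/2} \;=\; \OP\!\left(\left(\tfrac{\log^3 n}{r_n}\right)^{1/4}\right).
\end{equation*}

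The main obstacle is the sharpness of the uniform bound on $\max_R |R|^{-1/2}|S_R|$: a naive symmetrization argument via Lemma \ref{prop:partialsum} combined with Lemma \ref{prop.mean_of_max_potenz_general} would cost an additional $\log n$ factor (because the maximum of sub-exponential variables contributes $\log n$, not $\sqrt{\log n}$), which would spoil the stated rate. The remedy is to apply Bernstein's inequality directly and verify that the relevant cutoff $u\asymp\sqrt{\log n}$ still falls inside the sub-Gaussian regime for every $R$ with $|R|\geq r_n$; this is where the standing condition $(\log n)^{10+\varepsilon}/r_n\to 0$ comfortably suffices, and no further refinement is necessary.
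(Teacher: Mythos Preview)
Your proposal is correct and follows essentially the same architecture as the paper: Taylor-expand $T_R^2$ around $\theta_0$ to third order, bound the cubic remainder uniformly, then convert via $|\sqrt a - \sqrt b|\leq\sqrt{|a-b|}$.

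The one substantive difference lies in how you obtain the key input
\[
\max_{R:\,|R|\geq r_n}|R|^{-1/2}|S_R|=\OP\bigl(\sqrt{\log n}\bigr).
\]
The paper proves this by first bounding the Gaussian version via \eqref{eq:Gaussian_partial_sums_max} and then transferring to the exponential-family case through the coupling of Theorem~\ref{lemma:Chernozhukov_NEF} (this is their equation~\eqref{eq:ReplacementOfKM}); the hypothesis $(\log n)^{10+\epsilon}/r_n\to 0$ is exactly what makes the coupling error $o_P(1)$. You instead apply Bernstein's inequality directly together with a union bound over $\#\mathcal R_n\leq c_1 n^{c_2}$, checking that the relevant threshold $u\asymp\sqrt{\log n}$ stays in the sub-Gaussian regime since $\log n=o(r_n)$. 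Your route is more elementary and self-contained for this lemma (it needs only $\log n = o(r_n)$, not the full $(\log n)^{10+\epsilon}$), whereas the paper's route is natural because the coupling is already in hand for Theorem~\ref{thm:approximation}. Both are valid, and you were right to flag that Lemma~\ref{prop:partialsum} combined with Lemma~\ref{prop.mean_of_max_potenz_general} would lose a $\sqrt{\log n}$ factor here.
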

\begin{proof}
For independent Gaussian random variables $ X_i \sim \mathcal N \left(0,1\right)$ it follows from {\eqref{eq:Gaussian_partial_sums_max}} and\eqref{eq:finite_cardinality} that
\begin{align*}
\E{}{ \left|\max\limits_{R \in \mathcal R_n} |R|^{-1/2} \sum\limits_{i \in R} X_i \right|} \leq C\sqrt{\log n}, 
\end{align*}
hence
\[
\frac{1}{\sqrt{\log(n)}} \max\limits_{\substack{R \in \mathcal R_n:\\|R| \geq r_n}} |R|^{-1/2} \sum\limits_{i \in R} X_i = \oP(1).
\]
Combining this result with Theorem \ref{lemma:Chernozhukov_NEF} (with {$a_i=1$} for all $i$) we obtain
\begin{equation}\label{eq:ReplacementOfKM}
\frac{1}{\sqrt{\log n}} \max\limits_{\substack{R \in \mathcal R_n:\\|R| \geq r_n}} \frac{1}{\sqrt{|R|}} \left| \sum_{i \in R} \frac{Y_i - m(\theta_0)}{\sqrt{v(\theta_0)}}\right| = \oP(1).
\end{equation}
Together with \eqref{eq:finite_cardinality} it follows
\begin{align*}
\max_{\substack{R\in\mathcal R_n:\\|R|\geq r_n}}\left|\overline{Y}_R-m(\theta_0)\right|\leq C\left(\frac{ \log(n) v(\theta_0)}{r_n}\right)^{\frac12}(1+\oP(1))\rightarrow0, \quad n \rightarrow \infty.
\end{align*} 
Therefore, $\overline{Y}_R>m(\theta_0)/\sqrt2$ in probability if $n$ is large enough uniformly over  $R$, s.t. $|R|\geq r_n$. {Let $\phi \left(x\right) := \sup_{\theta \in \Theta} \left[\theta \cdot x - \psi \left(\theta\right)\right]$ be the Legendre-Fenchel conjugate  of $\psi$ and 
\[
J \left(x, \theta\right) := \phi\left(x\right) - \left[ \theta \cdot x - \psi \left(\theta\right)\right],
\]
then the LRT statistic $T_R(Y,\theta_0)$ in \eqref{eq:lr_stat} can be written as
\begin{align}
T_R(Y,\theta_0)
=&\sqrt{2 \left(\sup\limits_{\theta} \sum\limits_{i \in R} \left(\theta \cdot Y_i- \psi(\theta) \right) - \sum\limits_{i \in R} \left(\theta_0 \cdot Y_i - \psi(\theta_0) \right)\right)}\nonumber\\
=& \sqrt{2 \left|R\right| J \left(\bar Y_R, \theta_0\right)}\label{eq:lr_stat_representation}
\end{align}
with $\bar Y_R=|R|^{-1}\sum_{i\in R}Y_{i}$. Note that by definition it holds $ J \left(\bar Y_R, \theta_0\right) \geq 0$.} 
As the $\sup\limits_{\theta \in \Theta} \prod\limits_{i \in R} p_\theta(Y_{i})$ is attained at that $\theta$ for which  $\psi'(\theta) = \overline{Y}_R$ we derive
\begin{align*}
\phi(\overline{Y}_R) = \langle m^{-1}(\overline{Y}_R), \overline{Y}_R\rangle - \psi\left(m^{-1}(\overline{Y}_R)\right),
\end{align*}
and therefore
\begin{align*}
J(\overline{Y}_R, \theta_0) &= \langle m^{-1}(\overline{Y}_R), \overline{Y}_R\rangle - \psi\left(m^{-1}(\overline{Y}_R)\right) - \left( \langle \theta_0, \overline{Y}_R \rangle - \psi(\theta_0)\right)\\
&= \langle m^{-1}(\overline{Y}_R)- \theta_0, \overline{Y}_R \rangle - \left( \psi\left( m^{-1}(\overline{Y}_R) \right)-\psi(\theta_0)\right).
\end{align*}
Note that $\bar{Y}_R \in D(m^{-1})$ for large enough $n$, as the latter is an open set. A Taylor expansion of $\psi$ around $\theta_0$ and one of second order of $m^{-1}$ around $m(\theta_0),$ yields
\begin{align}\label{eq.Taylor_general_non_stand}
T_R(Y, \theta_0) &= {\left(|R| \left( \frac{\overline{Y}_R- m(\theta_0)}{\sqrt{v(\theta_0)}}\right)^2 + |R| s_n\left( \frac{\overline{Y}_R- m(\theta_0)}{\sqrt{v(\theta_0)}} \right)\right)^{1/2}}
\end{align}
with $s_n$ s.t.\ $\left| s_n\left(x \right)\right| \leq c x^3 + \oP(1)$ for some $c>0$. Consequently
\begin{align*}
&\max_{\substack{R\in\mathcal R_n:\\|R|\geq r_n}}\left|{
T_R^2(Y,\theta_0)}-|R|\frac{(\overline{Y}_R-m(\theta_0))^2}{v(\theta_0)}\right| \\ 
\lesssim &\max_{\substack{R\in\mathcal R_n:\\|R|\geq r_n}}|R|\frac{|\overline{Y}_R-m(\theta_0)|^3}{v(\theta_0)^{3/2}} + \oP(1)\\
=&\max_{\substack{R\in\mathcal R_n:\\|R|\geq r_n}}|R|^{-\frac{1}{2}}\left|\frac{\sum_{i\in R}(Y_{i}-m(\theta_0))}{\sqrt{|R|}\sqrt{v(\theta_0)}}\right|^3 + \oP(1)\\[0.1cm]
\leq & (\log^3(n)r_n^{-1})^{1/2} + \oP(1),
\end{align*}
where we again used \eqref{eq:ReplacementOfKM}. Now $|a-b|\leq|a^2-b^2|^{\frac12}$ yields the claim.
\end{proof}

Now we are in position to prove Theorem \ref{thm:approximation}. So far we have only shown that the maximum over the local likelihood ratio statistics can be approximated by Gaussian versions, but we did not include the scale penalization $\pen{v}{\left|R\right|}$ in \eqref{eq:defi_pen}. To include this in the approximation result, we will slice the maximum into scales, where the penalty-term is almost constant. Then, we show that we may bound the maximum over all scales by the sum of the maximum over theses families. The price to pay is an additional $\log(n)$ factor on the smallest scale.
\begin{proof}[Proof of Theorem \ref{thm:approximation}]
(a) It follows from the triangle inequality 
\[
\left|\|x\|_\infty -\|y\|_\infty\right| \leq \|x-y\|_\infty,
\]
Lemma \ref{prop.approx.hyperrectangles} and \eqref{eq:r_n} that
\begin{align*}
&\left|\max\limits_{\substack{R \in \mathcal R_n:\\ |R| \geq r_n}} \left(T_R(Y, \theta_0) - \pen{v}{\left|R\right|} \right)- \right.\\
& \hspace{0.7cm}\left.\max\limits_{\substack{R \in \mathcal R_n:\\ |R| \geq r_n}} \left( |R|^{1/2} \left| \frac{\overline{Y}_R - m(\theta_0)}{\sqrt{v(\theta_0)}}\right|-\pen{v}{\left|R\right|}\right)\right| =  \OP\left(\left(\frac{\log^3(n)}{r_n}\right)^{1/4}\right).
\end{align*} 
Define 
\begin{align*}
Y^R&:= |R|^{-1/2} \sum_{i \in R} \left( \frac{Y_i - m(\theta_0)}{\sqrt{v(\theta_0)}}\right)\\
X^R&:= |R|^{-1/2} \sum_{i \in R} X_i, \quad X_i \overset{i.i.d.}{\sim} \mathcal{N}(0,1).
\end{align*}
With this notation and a symmetry argument
we find from the proof of Theorem \ref{lemma:Chernozhukov_NEF} with {$a_i\equiv1$} that
\begin{align*}
\P{}{\left| \max\limits_{\substack{R \in \mathcal R_n:\\ |R| \geq r_n}} \left|Y^R\right| - \max\limits_{\substack{R \in \mathcal R_n:\\ |R| \geq r_n}} \left| X^R\right| \right|>\delta }\lesssim \delta^{-3}\left(\frac{\log^{10}(n)}{r_n}\right)^{1/2}.
\end{align*}
Let $\delta_n:= ((\log^{12}(n)/r_n)^{1/10} \searrow 0.$
Now define $\epsilon_j := j \delta_n, j \in \N$ and 
\[
\mathcal{R}_{n,j} := \left\lbrace R \in \mathcal R_n \left| \right. \exp(\epsilon_j) < |R| < \exp(\epsilon_{j+1}) \right\rbrace.
\]
Then the set of candidate regions $\mathcal R_n$ can be written as 
\[
\mathcal R_{n|r_n} = \bigsqcup\limits_{j \in J} \mathcal{R}_{n,j}, \quad J:= \left\lbrace  \frac{1}{\delta_n}\log \left(\log^{12}(n)\right), \ldots, \frac{1}{\delta_n}\log(n^d)\right\rbrace
\]
with $|J| \leq \frac{\log(n^d)}{\delta_n}$. 
If we abbreviate
\begin{align*}
\p_j := \pen{v}{\exp\left(\epsilon_j\right)} = \sqrt{2v \left(\log \left( \frac{n^d}{\exp(\epsilon_j)}\right) + 1\right)},
\end{align*}
then the slicing above implies 
\begin{align*}
\p_{j+1} \leq \pen{v}{\left|R\right|} \leq \p_j, \qquad\text{for all}\qquad R \in \mathcal{R}_{n,j}. 
\end{align*}
Using $\sqrt{a}- \sqrt{b} = (a-b)/ \left(\sqrt{a}+\sqrt{b}\right)$, we get
\begin{align*}
0 &\leq \p_j - \p_{j+1} \\ 
&= \frac{2v(\epsilon_{j+1}- \epsilon_j)}{\sqrt{2v\left[ \log(n^d) + 1 - \epsilon_j\right]} + \sqrt{2v\left[ \log(n^d) + 1 - \epsilon_{j+1}\right]}}.
\end{align*}
The largest index in $J$ is $\frac{1}{\delta_n}\log(n^d)$ and therefore the maximal value of $\epsilon_i$ is given by $\bar \epsilon= \log(n^d)$ and  $\log(n^d) + 1 - \bar{\epsilon} =1$. Therefore,
\begin{align*}
0 \leq \p_j - \p_{j+1} \leq \frac{2v(\epsilon_{j+1}-\epsilon_j)}{2\sqrt{2v}} = \delta_n \sqrt{\frac{v}{2}}.
\end{align*}
This means that for $n \rightarrow \infty$ the penalty terms $\pen{v}{\left|R\right|}$, $R \in \mathcal{R}_{n,j}$ can be considered as constant. 
{Therefore by straight forward computations, $|J| \leq \frac{\log(n^d)}{\delta_n}$ and choosing $\delta_n \leq \frac{\epsilon}{2}$ we derive
\begin{align*}
&\P{}{\left| \max\limits_{\substack{R \in \mathcal R_n: \\ |R| \geq r_n}} \left(\left| Y^R\right| - \pen{v}{\left|R\right|}\right) - \max\limits_{\substack{R \in \mathcal R_n: \\ |R| \geq r_n}} \left(\left| X^R\right| - \pen{v}{\left|R\right|}\right) \right| \geq \epsilon}\\ 
&\leq \P{}{\max\limits_{j \in J} \left| \max\limits_{R \in \mathcal{R}_{n,j}} \left| Y^R\right| - \max\limits_{R \in \mathcal{R}_{n,j}} \left| X^R\right|\right|  \geq \frac{\epsilon}{2}}\\
&\leq \sum\limits_{j \in J} \P{}{\left| \max\limits_{R \in \mathcal{R}_{n,j}} \left| Y^R\right| - \max\limits_{R \in \mathcal{R}_{n,j}} \left| X^R\right|\right|  \geq \frac{\epsilon}{2}}\\
&\leq |J| \frac{\delta_n^2}{\log(n^d)} = \delta_n \searrow 0, \quad n \rightarrow \infty.
\end{align*}}

\smallskip 

(b) This is a direct consequence of (a).
\end{proof}

We will now continue with the proof of Theorem \ref{thm:weak_limit}. Taking into account the result of Theorem \ref{thm:approximation},we only have to prove {an invariance principle and exploit the continuous mapping theorem}, which will be done in the following. 
%
\begin{lem}\label{prop.invariance}
Let $\mathcal R^*$ satisfy Assumption \ref{ass:R_complex} and be equipped with the canonical metric $\rho^*$ as in \eqref{eq:metric} and define $\mathcal R_n$ as in \eqref{eq:defi_R_n}. Furthermore let $W$ denote white noise on $\left[0,1\right]^d$. For $X_i \stackrel{\text{i.i.d.}}{\sim} \mathcal{N}(0,1)$, $i \in I_n^d$ define
\[
Z_n(R^*) := n^{-d/2}\sum\limits_{i/n \in R^*} X_i \stackrel{\mathcal{D}}{=} n^{-d/2}\sum\limits_{i \in \{1, \ldots, n\}^d} \left|nR^*\cap A_i\right| X_i, \qquad R^* \in \mathcal R^*
\]
where $A_i = (i_1-1,i_1] \times \ldots \times (i_d-1,i_d]$ is the unit cube with upper corner $i$. Then it holds
\begin{align*} 
Z_n \overset{\mathcal{D}}{\rightarrow} W, \quad n \rightarrow \infty.
\end{align*}
\end{lem}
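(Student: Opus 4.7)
Both $Z_n$ and $W$ are centered Gaussian processes indexed by $\mathcal{R}^*$, so by the standard characterization of weak convergence in $\ell^\infty(\mathcal{R}^*)$ \citep[][Thm.~1.5.4]{VaartWellner:1996}, it suffices to establish (i) convergence of all finite-dimensional distributions and (ii) asymptotic $\rho^*$-equicontinuity of $\{Z_n\}_{n\in\mathbb N}$. By Gaussianity, (i) reduces to covariance convergence, and (ii) will follow from a Gaussian chaining argument once the intrinsic metric of $Z_n$ is uniformly controlled by $\rho^*$ and the entropy of $\mathcal R^*$ w.r.t.\ $\rho^*$ is finite.

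For (i), a direct computation gives
\[
\Cov(Z_n(R^*),Z_n(S^*))=n^{-d}\sum_{i\in I_n^d}|nR^*\cap A_i|\,|nS^*\cap A_i|,
\]
whereas $\Cov(W(R^*),W(S^*))=|R^*\cap S^*|$. I split the sum according to whether $A_i\subset n(R^*\cap S^*)$, $A_i$ lies in the boundary annulus $nR^*(1/n)\cup nS^*(1/n)$, or $A_i\cap n(R^*\cup S^*)=\emptyset$. The interior piece is a Riemann sum converging to $|R^*\cap S^*|$ up to an $O(n^{-1})$ error, the outside piece vanishes, and the boundary contribution is bounded by $Cn^{-1}$ via Assumption~\ref{ass:R_complex}(b). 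Thus all covariances converge, giving f.d.d.\ convergence.

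For (ii), Assumption~\ref{ass:R_complex}(a) makes $\mathcal{R}^*$ a VC-class, so \citet[][Thm.~2.6.4]{VaartWellner:1996} yields $N(\epsilon,\mathcal{R}^*,\rho^*)\leq K\epsilon^{-2(\nu-1)}$ with $\nu=\nu(\mathcal R^*)<\infty$, and hence $\int_0^1\sqrt{\log N(\epsilon,\mathcal R^*,\rho^*)}\,d\epsilon<\infty$. The intrinsic pseudo-metric of $Z_n$ satisfies
\[
d_n(R^*,S^*)^2=n^{-d}\sum_{i\in I_n^d}\bigl(|nR^*\cap A_i|-|nS^*\cap A_i|\bigr)^2\leq \rho^*(R^*,S^*)^2+Cn^{-1}
\]
by the same boundary-layer estimate, so $d_n$ is uniformly dominated by $\rho^*$ modulo a vanishing correction. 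Plugging this into the standard chaining bound for Gaussian processes majorised by $\rho^*$ yields asymptotic $\rho^*$-equicontinuity, and the continuous mapping theorem concludes the proof.

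\textbf{Main obstacle.} The delicate point is that the discretization errors appearing both in the covariance calculation and in the comparison of $d_n$ with $\rho^*$ must be controlled \emph{uniformly} over the uncountable family $\mathcal{R}^*$. This uniformity is exactly what the boundary-regularity hypothesis Assumption~\ref{ass:R_complex}(b) delivers, and is the reason this condition (and not just the VC-assumption alone) is essential for the weak invariance principle.
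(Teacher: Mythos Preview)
Your proposal is correct and follows essentially the same two-step scheme as the paper's proof (finite-dimensional convergence plus asymptotic equicontinuity via an entropy integral), verified through the same framework \citep[cf.][Thm.~2.1]{kosorok2007introduction} or equivalently \citet[Thm.~1.5.4]{VaartWellner:1996}.

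There is one genuine simplification worth noting. For step~(i) the paper invokes a central limit theorem for random fields \citep{dedecker1998central} together with the lattice-point counting lemma of \citet{dedecker2001exponential}. You observe instead that $Z_n$ is \emph{already} a centered Gaussian process for every $n$, so finite-dimensional convergence reduces to showing that the covariances $n^{-d}\#\{i:i/n\in R_1^*\cap R_2^*\}$ converge to $|R_1^*\cap R_2^*|$, which follows directly from the boundary-regularity assumption. This bypasses the CLT entirely and makes the role of Assumption~\ref{ass:R_complex}(b) more transparent. For step~(ii) you are also more explicit than the paper in comparing the intrinsic pseudo-metric $d_n$ of $Z_n$ with $\rho^*$; the paper simply appeals to ``standard bounds on the modulus of continuity'' and writes down the entropy integral. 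In fact, with the weighted representation $Z_n(R^*)=n^{-d/2}\sum_i|nR^*\cap A_i|X_i$ one has $d_n\leq\rho^*$ exactly (since $(a_i-b_i)^2\leq|a_i-b_i|\leq |n(R^*\triangle S^*)\cap A_i|$), so your additive $Cn^{-1}$ correction is not even needed there; it only enters if one insists on the indicator version.
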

\begin{proof}
Note that $\mathcal{R}^*$ is totally bounded w.r.t. $\rho^*$. We will show the assumptions of  \citep[][Thm. 2.1]{kosorok2007introduction}:
\begin{enumerate}
\item \textit{Tightness:} The white noise $W$ is tight.
\item \textit{Totally boundedness:} 
By Markov's inequality and standard bounds on the modulus of continuity, we obtain using Assumption \ref{ass:R_complex}(a) that
\begin{align*}
&\mathbb{P}^*_{}\left[ \sup\limits_{\substack{R_1^*, R_2^* \in \mathcal{R}^*\\ \rho(R_1^*,R_2^*)\leq  \delta}} |Z_n(R_1^*)-Z_n(R_2^*)|>\epsilon\right]\\
\leq&\frac{1}{\epsilon} \E{}{\sup\limits_{\substack{R_1^*, R_2^* \in \mathcal{R}^*\\ \rho(R_1^*, R_2^*)\leq  \delta}} \left| Z_n(R_1^*)-Z_n(R_2^*)\right| }\\
\lesssim & \int_0^\delta {\sqrt{2\nu \left(\mathcal{R}^*\right)\log \left(\frac{c}{u} \right) }} \,\mathrm du,\\
\end{align*}
which tends to $0$ as $\delta \searrow 0$.
\item 
\textit{Finite dimensional convergence}: The convergence of the finite-dimensional laws is an application of the central limit theorem for random fields \citep[][Thm 2.2]{dedecker1998central} and \citep[][Lemma 2]{dedecker2001exponential}, which shows that
\[
\frac{\left|nR^*\cap\mathbb{Z}^d\right|}{n^d} \rightarrow \left|R^*\right|
\]
for regular Borel sets $R^* \subset [0,1]^d$ with $\left|R^*\right|)>0.$ Consequently, the central limit theorem shows for any fixed $R^* \in \mathcal R^*$ that
\begin{align*}
Z_n(R^*) \stackrel{\mathcal{D}}{\rightarrow} N\left(0, \left|R^*\right| \right) \qquad\text{as}\qquad n \to \infty
\end{align*}
A similar computation shows that
\begin{align*}
\text{Cov}\left( Z_n(R_1^*), Z_n(R_2^*)\right) 
&\to \left|R_1^* \cap R_2^*\right|
\end{align*}
for all $R_1^*, R_2^* \in \mathcal R$. This shows finite dimensional convergence.
\end{enumerate}
\end{proof}

Now we want to apply the generalized version of the continuous mapping theorem \citep[see e.g.][Thm. 5.5]{billingsley2013convergence}. For $c \geq 0$ and $x \in \mathcal{C}(\mathcal{B}([0,1]^d), \R),$ where $\mathcal{B}([0,1]^d)$ denote the Borel sets of $[0,1]^d$ define
\begin{align*}
h^c(x)&:= \sup\limits_{\substack{R^*\in \mathcal{R}^*: \\ |R^*|>c^d}} \left( \frac{|x(R^*)|}{\sqrt{|R^*|}} - \pen{v}{n^d\left|R^*\right|}\right)\\
h_n^c(x)&:= \max\limits_{\substack{R \in \mathcal R_n: \\ |R|> (cn)^d}} \left(\frac{\left|x(R/n)\right|}{\sqrt{|R|/ n^d}} - \pen{v}{\left|R\right|} \right).
\end{align*}
The necessary conditions to apply the continuous mapping theorem are given by the following Lemma:
\begin{lem}\label{lem:cont}
Consider $h^c, h_n^c$ as functions $\left(\mathcal{C}(\mathcal{B}([0,1]^d), \R), \|\cdot\|_\infty\right) \rightarrow \R$. 
\begin{enumerate}
\item[i)] $h^c$ is uniformly continuous and $\left(h_n^c\right)_{n \in \N}$ is a sequence of equi-continuous functions, (uniformly in $n$).
\item[ii)] For $(x_n)_{n} \in \mathcal{C}(\mathcal{B}([0,1]^d),\R),$ s.t. $x_n \rightarrow x$ it holds
\[ h_n^c(x_n) \rightarrow h^c(x), \quad n \rightarrow \infty.\]
\end{enumerate}
\end{lem}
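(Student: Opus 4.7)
I will prove that both $h^c$ and $h_n^c$ are Lipschitz in the sup-norm with constant $c^{-d/2}$, which immediately yields uniform continuity of $h^c$ and equi-continuity of $(h_n^c)_n$. Using $\bigl||a|-|b|\bigr|\le|a-b|$ and $|\sup f-\sup g|\le\sup|f-g|$, for $x,y\in\mathcal C(\mathcal B([0,1]^d),\mathbb R)$,
$$|h^c(x)-h^c(y)|\le\sup_{R^*:\,|R^*|>c^d}\frac{|x(R^*)-y(R^*)|}{\sqrt{|R^*|}}\le c^{-d/2}\,\|x-y\|_\infty,$$
and an identical bound holds for $h_n^c$ since $|R|/n^d>c^d$ provides the same denominator bound (the penalty terms cancel). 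The key point for equi-continuity is that the Lipschitz constant is independent of $n$.

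\textbf{Plan for (ii).} By the triangle inequality and (i), $|h_n^c(x_n)-h^c(x)|\le c^{-d/2}\|x_n-x\|_\infty+|h_n^c(x)-h^c(x)|$, so it suffices to prove $h_n^c(x)\to h^c(x)$ for a fixed $x\in\mathcal C$. For $R^*\in\mathcal R^*$ let $\tilde R_n^*:=(I_n^d\cap nR^*)/n$; then $R^*\triangle\tilde R_n^*\subset R^*(\sqrt d/n)$, so Assumption~\ref{ass:R_complex}(b) gives $\rho^*(R^*,\tilde R_n^*)^2\le C\sqrt d/n$ and $\bigl||R^*|-|\tilde R_n^*|\bigr|\le C\sqrt d/n$, both \emph{uniformly} over $R^*\in\mathcal R^*$. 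Since $\mathcal R^*$ is totally bounded in $\rho^*$ by Assumption~\ref{ass:R_complex}(a) and $x$ is uniformly continuous thereon, $\sup_{R^*\in\mathcal R^*}|x(R^*)-x(\tilde R_n^*)|\to 0$. Combined with the local Lipschitz behaviour of $r\mapsto 1/\sqrt r$ and $r\mapsto\pen{v}{r}$ on the range $r\ge c^d/2$, the $R^*$-term in $h^c(x)$ differs from the corresponding discretized-region term in $h_n^c(x)$ by $o(1)$, uniformly in $R^*$.

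\textbf{Closing the gap and main obstacle.} Fix $\eta>0$. For $n$ large, every $R^*$ with $|R^*|>c^d+\eta$ has $|\tilde R_n^*|>c^d$, so $R=I_n^d\cap nR^*$ is a contributor to $h_n^c(x)$; this gives $h^{c+\eta}(x)\le h_n^c(x)+o(1)$. Conversely, every $R\in\mathcal R_n$ with $|R|>(cn)^d$ arises from some $R^*\in\mathcal R^*$ with $|R^*|>c^d-\eta$, yielding $h_n^c(x)\le h^{c-\eta}(x)+o(1)$. Hence
$$h^{c+\eta}(x)\le\liminf_{n\to\infty}h_n^c(x)\le\limsup_{n\to\infty}h_n^c(x)\le h^{c-\eta}(x),$$
and right-continuity $\lim_{\eta\downarrow 0}h^{c+\eta}(x)=h^c(x)$ is trivial since $\{|R^*|>c^d\}=\bigcup_{\eta>0}\{|R^*|>c^d+\eta\}$. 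The main obstacle is the matching left-continuity $\lim_{\eta\downarrow 0}h^{c-\eta}(x)=h^c(x)$, which amounts to excluding a supremum ``pile-up'' at the boundary $|R^*|=c^d$. This may fail for pathological $x$, but the lemma is applied via the extended continuous mapping theorem to sample paths of the white noise $W$, which almost surely avoid such pile-ups thanks to the joint $\rho^*$-continuity of the integrand together with Assumption~\ref{ass:R_complex}(b); this suffices to invoke Theorem~5.5 of \citet{billingsley2013convergence} in the subsequent step.
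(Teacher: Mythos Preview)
Your argument for part~(i) is correct and coincides with the paper's: both establish the Lipschitz bound $|h_n^c(x)-h_n^c(y)|\le c^{-d/2}\|x-y\|_\infty$ via $\bigl|\max a_i-\max b_i\bigr|\le\max|a_i-b_i|$.

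For part~(ii) your route is genuinely different from the paper's, and the gap you flag is real for your argument as written but is in fact avoidable. The paper does not sandwich between $h^{c\pm\eta}$. Instead it observes that
\[
\mathcal A:=\{R^*\in\mathcal R^*:|R^*|\ge c^d\}
\]
is \emph{compact} in $\rho^*$ (totally bounded by Assumption~\ref{ass:R_complex}(a), closed since $|R_1^*|-|R_2^*|\le\rho^*(R_1^*,R_2^*)^2$), and that $g(R^*):=|x(R^*)|/\sqrt{|R^*|}-\pen{v}{n^d|R^*|}$ is $\rho^*$-continuous on $\mathcal A$. Hence $h^c(x)=\max_{\mathcal A}g=g(\tilde R)$ for some $\tilde R\in\mathcal A$. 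Identifying the discretised regions with elements of $\mathcal A$ gives $h_n^c(x)\le h^c(x)$ directly, while choosing $R_n\in\mathcal B_n$ with $R_n\to\tilde R$ yields $h_n^c(x)\ge g(R_n)\to g(\tilde R)=h^c(x)$. No boundary ``pile-up'' issue arises.

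Your left-continuity worry is therefore an artefact of the $\pm\eta$ decomposition, not an intrinsic obstruction: the same compactness shows $\lim_{\eta\downarrow 0}h^{c-\eta}(x)=h^c(x)$ for \emph{every} continuous $x$. Indeed, if $h^{c-\eta}(x)\ge L>h^c(x)$ for all small $\eta$, pick $R_\eta^*$ with $(c-\eta)^d<|R_\eta^*|<c^d$ and $g(R_\eta^*)\ge(L+h^c(x))/2$; a $\rho^*$-convergent subsequence has limit $R_\infty^*$ with $|R_\infty^*|=c^d$ and $g(R_\infty^*)\ge(L+h^c(x))/2>h^c(x)$, contradicting $R_\infty^*\in\mathcal A$. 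So your retreat to ``almost surely for white noise'' is unnecessary, and the lemma holds deterministically as stated. The paper's compactness argument is the cleaner way to see this; your discretisation-error estimates via Assumption~\ref{ass:R_complex}(b) are correct but become redundant once you exploit compactness of $\mathcal A$.
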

\begin{proof}
\begin{enumerate}
\item[i)]
Let $\epsilon>0,$ choose $\delta=\epsilon c^{d/2}$. Consider two functions $x,y \in \mathcal{C}(\mathcal{B}([0,1]^d), \R)$ s.t. $d(x,y) = \sup\limits_{R^* \subset [0,1]^d} \left||x(R^*)|-|y(R^*)|\right| < \delta.$
By using $\left|\max a_i - \max b_i \right| \leq \max |a_i - b_i|$ we find
\begin{align*}
|h_n^c(x)-h_n^c(y)| \leq \max_{\substack{R \in \mathcal R_n\\  |R|>(cn)^d}} \left| \frac{|x(R/n)|-|y(R/n)| }{\sqrt{|R|/n^d}} \right|\leq \frac{\delta}{c^{d/2}}=\epsilon.
\end{align*} 
Similar arguments yield the uniform continuity of $h^c.$\\
\item[ii)]
Let $(x_n)_n,~ x \in \mathcal{C}(\mathcal{B}([0,1]^d),\R),$ s.t. $x_n \rightarrow x.$ Since the functions $(h_m^c)_{m \in \N}$ are equi-continuous, for any $\epsilon > 0$ we can find an $N_1 \in \N$ s.t. $\forall n > N_1 ~\forall m:$
\begin{align*}
|h_m^c(x_n) - h_m^c(x)| < \frac{\epsilon}{2}.
\end{align*}
Given $\epsilon$ and $N_1$ and $n > N_1$ with $|h_m^c(x_n) - h_m^c(x)| < \frac{\epsilon}{2}$, choose $m=n.$ Then
\begin{align}\label{ContinuityOfh_n^c}
|h_n^c(x_n)-h_n^c(x)| < \epsilon/2.
\end{align}
Now let us define
\[
\mathcal{A}:= \left\lbrace R^* \in \mathcal{R}^*: |R^*| \geq c^d\right\rbrace, \quad \mathcal{B}_n := \left\lbrace R/n  \in \mathcal{R}^* : R \in \mathcal R_n, |R| \geq (cn)^d\right\rbrace.
\]
The set $\mathcal{A}$ is a compact set w.r.t. the metric $\rho^*$ defined in \eqref{eq:metric}, w.r.t. which  $\mathcal{R}^*$ is totally bounded. Furthermore $\mathcal{B}_n$ is a finite subset of $\mathcal{A}.$ If we fix $x \in \mathcal B (\left[0,1\right]^d)$ and introduce $g:\mathcal{A} \rightarrow \R$ by
\begin{align*}
g(R^*):= \left(\frac{|x(R^*)|}{\sqrt{|R^*|}} - \pen{v}{\left|R^*\right|}\right), \qquad R^* \in \mathcal{R}^*,
\end{align*}
then it holds
\begin{align}\label{UpperBoundOnh_n}
h^c(x) = \sup_{R^* \in \mathcal{A}} g(R^*) {\geq} h_n^c(x) = \max_{R^* \in \mathcal{B}_n} g(R^*).
\end{align}
since $\mathcal{B}_n$ is a subset of $\mathcal{A}$. Straight forward computations show that $g$ is continuous w.r.t. $\rho^*$, which implies by compactness of $\mathcal A$ that there exists an $\widetilde{R} \in \mathcal A$ s.t. $h^c(x)= g(\widetilde{R}).$
Now let $R_n \in \mathcal{B}_n$ be a sequence s.t. $R_n \rightarrow \widetilde{R}, n \rightarrow \infty$ w.r.t. $\rho$. Then $g(R_n) \rightarrow g(\widetilde{R})$ as $n \rightarrow \infty$ and hence
\begin{align*}
h^c(x) \overset{\eqref{UpperBoundOnh_n}}{\geq} h_n^c(x) \geq g(R_n) \rightarrow g(\tilde{R}) = h^c(x).
\end{align*}
Consequently there exists a $N_2 \in  \N$ s.t. $\forall n > N_2$ it holds
\begin{align*}
|h^c(x)-h_n^c(x)| < \epsilon/2,
\end{align*}
which together with \eqref{ContinuityOfh_n^c} implies
\begin{align*}
|h_n^c(x_n)- h^c(x)| \leq \epsilon \qquad\text{for all}\qquad n > \max\{N_1,N_2\}.
\end{align*}
\end{enumerate}
\end{proof}

Now we are in position to prove Theorem \ref{thm:weak_limit}:
\begin{proof}[Proof of Theorem \ref{thm:weak_limit}]
By Lemma \ref{prop.invariance}, Lemma \ref{lem:cont} and the generalized version of the continuous mapping theorem \citep[see e.g.][Thm. 5.5]{billingsley2013convergence} we get
\begin{align*}
h_n^c(Z_n) \overset{\mathcal{D}}{\rightarrow} h^c(W), \quad n \rightarrow \infty.
\end{align*}
The functions $h_n^c$ and $h^c$ have been defined such that
\begin{align*}
h_n^c(Z_n)= M_n\left(\mathcal R_{n |  (cn)^d},v \right),\quad \text{and}\quad
h^c(W)=M\left(\mathcal{R}^*_{| c^d},v\right),
\end{align*}
i.e. for all $c>0$ holds
\begin{align*}
M_n\left(\mathcal R_{n \left|  \right. (cn)^d },v\right) \overset{\mathcal{D}}{\rightarrow} M\left(\mathcal{R}^*_{\left| \right. c^d},v \right) \qquad\text{as}\qquad n \to \infty.
\end{align*}
Since $M\left(\mathcal{R}^*_{\left| \right. c^d} ,v\right)\overset{\mathcal{D}}{\rightarrow}  M\left(\mathcal{R}^*,v\right), ~ c \rightarrow 0$, we get
\[
\lim\limits_{c \rightarrow 0} \lim\limits_{n \rightarrow \infty} M_n\left(\mathcal R_{n \left|  \right. (cn)^d},v \right) = M\left(\mathcal{R}^*,v\right).
\]
It can also be readily seen from the definition of $M_n$ and $M$ that
\[
\liminf_{n \rightarrow \infty} \P{}{M_n\left(\mathcal R_{n \left| \right. r_n },v\right) \leq t} \geq \P{}{M\left(\mathcal{R}^*,v \right) \leq t}.
\]
Now let $c>0$ be fixed and assume $r_n < (cn)^d$ for all $n \in \N$. Then we obtain altogether that
\begin{align*}
\P{}{M\left(\mathcal{R}^*,v \right) \leq t} & \leq \liminf_{n \rightarrow \infty} \P{}{M_n\left(\mathcal R_{n \left| \right. r_n},v\right) \leq t} \\
& \leq \limsup_{n \rightarrow \infty} \P{}{M_n\left(\mathcal R_{n \left| \right. (cn)^d},v \right) \leq t} \\
& \to \P{}{M\left(\mathcal{R}^* ,v\right) \leq t} \qquad\text{as}\qquad c \searrow 0,
\end{align*}
which yields 
\[
M_n \left(\mathcal R_{n|r_n}, v\right) \toD M\left(\mathcal R^*,v\right) \qquad\text{as}\qquad n \to \infty.
\]
This proves the main statement. It remains to show a.s. boundedness and non-degenerateness of $M \left(\mathcal R^*, v\right)$. We apply \citep[][Thm. 6.1]{ds01} with $\rho^*$ as in \eqref{eq:metric} and 
\begin{align*}
\sigma^2(R^*):=\left|R^*\right|,\qquad X(R^*) :=W(R^*).
\end{align*}
Let us check the three conditions from their theorem:\\
\textit{i)} $\sigma^2(R_1^*) \leq \sigma^2(R_2^*) +  \rho^*(R_1^*, R_2^*)^2$ for all $R_1^*, R_2^* \in \mathcal{R}^*$ is obviously fulfilled since $R_1^* \cap R_2^* \subset R_2^*$ and $R_1^* \setminus R_2^* \subset R_1^* \bigtriangleup R_2^*$. Since $\V{W(R^*)}= \left|R^*\right|$, 
\begin{align}\label{eq.DSpokTheorem_i}
\P{}{X(R^*) > \sigma(R^*)\eta} = \P{}{W(R^*)  > \eta ~(\left|R^*\right|)^{1/2}}\leq \frac{1}{2} \exp\left(-\frac{\eta^2}{2}\right).
\end{align}
\textit{ii)} For
\[
\P{}{|X(R_1^*)-X(R_2^*)|>\rho(R_1^*, R_2^*)\eta}  = \P{}{ \left|  W(R_1^*) - W(R_2^*)\right| > \left|R_1^* \bigtriangleup R_2^*\right|)^{1/2} \eta }
\]
we compute that $W(R_1^*)-W(R_2^*) \sim \mathcal{N}(0, \sigma^2_{R_1^*, R_2^*})$, $\sigma^2_{R_1^*, R_2^*}= \left|R_1^*\right| + \left|R_2^*\right| - 2 \Cov(W(R_1^*), W(R_2^*))$ and $\left|R_1^* \bigtriangleup R_2^*\right|= \left|R_1^*\right| + \left|R_2^*\right| - 2 \left|R_1^* \cap R_2^*\right|$. With $\Cov(W(R_1^*), W(R_2^*)) = \left|R_1^* \cap R_2^*\right|$ we consequently find
\[
\P{}{|X(R_1^*)-X(R_2^*)|>\rho(R_1^*, R_2^*)\eta} \leq  \exp\left(-\frac{\eta^2}{2} \frac{\left|R_1^* \bigtriangleup R_2^*\right|}{\sigma^2_{R_1^*,R_2^*}}\right) = \exp\left(-\frac{\eta^2}{2}\right).
\]
\textit{iii)} Is fulfilled by Assumption \ref{ass:v} (cf. \eqref{eq:capacity_bound}).

\smallskip

\eqref{eq.DSpokTheorem_i} holds with $- W(R^*)$ as well, hence we get that the statistic $M\left(\mathcal{R}^* , v\right) < \infty$ a.s.
Non-degenerateness is obvious, as $M$ is always larger than the value of the local statistic on one fixed scale, which is non-degenerate. 
\end{proof}

\subsection{Proofs of Section \ref{subsec:power}}
Let us now prove the results from Section \ref{subsec:power}, namely Theorem \ref{thm:asymptPower} and Corollary \ref{cor:power}. First we introduce some abbreviations to ease notation. Let
\[
q^* := q_{1-\alpha,n}^{\mathrm O}, \qquad q := q_{1-\alpha,n}^{\mathrm{MS}}
\]
and denote the total signal on $Q \in \mathcal Q^n$ by
\begin{equation}\label{eq:signal}
\mu^n \left(Q\right) := \left|Q\right|^{-1/2} \sum\limits_{i \in Q}  \frac{m(\theta_i^n)-m(\theta_0)}{\sqrt{v(\theta_0)}} = \frac{\left|Q\cap Q_n\right|}{\sqrt{\left|Q\right|}} \frac{m\left(\theta_1^n\right) - m\left(\theta_0\right)}{\sqrt{v\left(\theta_0\right)}}.
\end{equation}
For brevity introduce the Gaussian process 
\[
\gamma \left(Q\right) : = \left| \mu^n \left(Q\right) + |Q|^{-\frac{1}{2}}\sum_{i \in Q}v_i X_i\right| - \pen{v}{\left|Q\right|}, \qquad Q \in \mathcal Q^n
\]
with $X_i \stackrel{\text{i.i.d.}}{\sim} \mathcal N \left(0,1\right)$ and $v_i = \sqrt{v\left(\theta_i\right) / v\left(\theta_0\right)}$. 

Let us now start with the analysis of the oracle procedure. As a preparation we require to leave out a suitable subset of hypercubes close to the true anomaly $Q_n$. Therefore, choose a sequence $\varepsilon_n$ such that $\varepsilon_n \searrow 0$ but $\varepsilon_n \mu^n \left(Q_n\right) \to \infty$ and denote the set of all hypercubes which are close to the anomaly by
\[
\mathcal U_n := \left\{Q \in \mathcal Q^n\left(a_n\right)  ~\big|~\mu^n \left(Q\right) \geq \mu^n\left(Q_n\right) \left(1-\varepsilon_n\right) \right\}.
\]
Furthermore define the extended neighborhood of the anomaly by
\[
\mathcal U := \left\{Q \in \mathcal Q^n\left(a_n\right) ~ \big|~ Q \cap Q' \neq \emptyset \text{ for some } Q' \in \mathcal U_n\right\},
\]
its complement by $\mathcal T := \mathcal Q^n \left(a_n\right)\setminus \mathcal U$. By definition, $\left\{\gamma\left(Q\right)\right\}_{Q \in \mathcal T}$ and $\left\{\gamma\left(Q\right)\right\}_{Q \in \mathcal U_n}$ are independent, which will allow us to compute the asymptotic power of the single-scale procedure. For a sketch of $\mathcal U_n$ and $\mathcal U$ see Figure \ref{fig:oracle}.

\begin{figure}[!htb]
\centering
\begin{tikzpicture}[scale = .05]

\coordinate (A) at (0,0);
\coordinate (B) at (100,100);

\coordinate (C) at (40,40);
\coordinate (D) at (50,50);

\coordinate (E) at (49,49);
\coordinate (F) at (59,59);

\coordinate (G) at (33,37);
\coordinate (H) at (43,47);

\coordinate (I) at (57,55);
\coordinate (J) at (67,65);

\coordinate (Q) at (57,40);
\coordinate (R) at (67,50);

\coordinate (K) at (26,30);
\coordinate (L) at (36,40);

\coordinate (M) at (5,5);
\coordinate (N) at (15,15);

\coordinate (O) at (77,77);
\coordinate (P) at (87,87);

\coordinate (S) at (65,12);
\coordinate (T) at (75,22);

\coordinate (U) at (15,80);
\coordinate (V) at (25,90);

\draw (A) rectangle (B);
\draw[fill=red] (C) rectangle (D);

\draw[pattern = north east lines] (E) rectangle (F);
\draw[pattern = north east lines] (G) rectangle (H);

\draw[pattern = crosshatch dots] (I) rectangle (J);
\draw[pattern = crosshatch dots] (K) rectangle (L);
\draw[pattern = crosshatch dots] (Q) rectangle (R);

\draw[fill=black] (M) rectangle (N);
\draw[fill=black] (O) rectangle (P);
\draw[fill=black] (S) rectangle (T);
\draw[fill=black] (U) rectangle (V);

\end{tikzpicture} 
\caption{Exemplary elements of the sets $\mathcal U_n, \mathcal U$ and $\mathcal T$ in $d = 2$: The anomaly is shown in red, the hatched cubes belong to $\mathcal U_n$, the dotted cubes to $\mathcal U$, and all black cubes belong to $\mathcal T$. By definition, for all $Q \in \mathcal U_n$ and $Q' \in \mathcal T$ it holds $Q \cap Q' = \emptyset$, which implies independence of $\left\{\gamma\left(Q\right)\right\}_{Q \in \mathcal T}$ and $\left\{\gamma\left(Q\right)\right\}_{Q \in \mathcal U_n}$.}
\label{fig:oracle}
\end{figure}
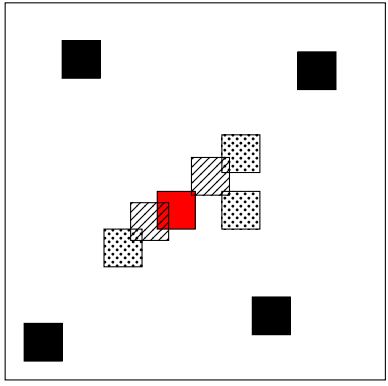

{
We start by bounding the covering number $N(\mathcal{U}, \rho, \epsilon)$ with respect to the canonical metric $\rho\left(Q,Q'\right)^2 = 2-2\left|Q\cap Q'\right|/\sqrt{\left|Q\right|\left|Q'\right|}$.
\begin{lem}\label{lem:coveringnumberU}
For any $\epsilon>0$ we have 
\begin{align*}
N(\mathcal{U}, \rho, \epsilon) \leq C \left(\frac{6d}{\epsilon} \right)^d.
\end{align*}
\end{lem}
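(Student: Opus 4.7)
The plan is a direct geometric covering argument that exploits two structural facts: every cube in $\mathcal U$ has the same side length, and their corners are localised near the corner of $Q_n$. Let $h := a_n^{1/d}$, and parametrise each $Q \in \mathcal U$ by its lower-left corner $t \in \mathbb R^d$. Writing $\delta_i := |t_i-t'_i|/h$ for two such cubes $Q,Q'$, a direct computation gives $|Q\cap Q'|/\sqrt{|Q||Q'|}=\prod_{i=1}^d(1-\delta_i)^+$, so that
\[
\rho(Q,Q')^2 \;=\; 2\Bigl(1-\prod_{i=1}^d(1-\delta_i)^+\Bigr) \;\leq\; 2\sum_{i=1}^d \delta_i \;\leq\; 2d\,\|t-t'\|_\infty/h,
\]
by the elementary bound $\prod(1-\delta_i) \geq 1-\sum\delta_i$ on $[0,1]^d$. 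This reduces the problem to covering the set of admissible corners of cubes in $\mathcal U$ in the $\ell_\infty$-metric on $\mathbb R^d$.

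For the localisation, any $Q' \in \mathcal U_n$ satisfies $|Q'\cap Q_n|/|Q_n|=\prod_i(1-|t'_i-t_{n,i}|/h)^+\geq 1-\varepsilon_n$; inspecting one factor at a time forces $|t'_i-t_{n,i}|\leq\varepsilon_n h$ coordinate-wise (otherwise already the $i$-th factor alone would push the product below $1-\varepsilon_n$). Any $Q \in \mathcal U$ intersects some such $Q'$, so by the triangle inequality its corner satisfies $|t_i-t_{n,i}|\leq(1+\varepsilon_n)h$ for every $i$, and for $n$ large enough that $\varepsilon_n\leq 1/2$ the admissible corners lie in a $d$-dimensional cube of side length at most $3h$ centred at $t_n$.

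Having confined the corners to an $\ell_\infty$-box of side $3h$ and reduced the metric comparison to the estimate in the first step, I would conclude by laying down a regular lattice of centres with $\ell_\infty$-spacing calibrated so that the associated $\rho$-balls of radius $\epsilon$ cover $\mathcal U$. A straightforward count of lattice points in a box of side $3h$ with the prescribed spacing, together with absorbing the boundary effects and the numerical constants from the two inequalities into $C$, then yields the claimed bound $N(\mathcal U,\rho,\epsilon)\leq C(6d/\epsilon)^d$. The argument is essentially a bookkeeping exercise once the same-side-length reduction and the corner-localisation are in place; the main (mild) subtlety is keeping track of the constants arising from the translation between $\rho$-radii and $\ell_\infty$-spacings, and I do not foresee any substantive technical obstacle.
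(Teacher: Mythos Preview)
Your argument is essentially the paper's: localise the lower-left corners of cubes in $\mathcal U$ to a box of side $\sim 3h$ about $Q_n$, lay down a regular lattice of corner positions, and bound $\rho$ through the intersection/symmetric-difference volume in terms of the lattice spacing. One small point (shared with the paper's own proof): since your comparison is $\rho^2 \le 2d\,\|t-t'\|_\infty/h$, calibrating the spacing to achieve $\rho\le\epsilon$ literally yields $N(\mathcal U,\rho,\epsilon)\le C(d/\epsilon^2)^d$ rather than $C(6d/\epsilon)^d$---the extra $\epsilon^{-d}$ cannot be ``absorbed into $C$''---but this is immaterial for the Dudley entropy integral where the lemma is applied.
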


\begin{proof}
Let $Q(3)$ denote the the cube of side length $3$ times the side length of $Q_n$ centered at the midpoint of $Q_n$. Let $0<\delta\leq \frac{\epsilon}{2d},~\frac{1}{\delta} \in \N.$ Choose equidistant points in $Q(3)$ of distance $\delta|Q_n|^{1/n}$ in each coordinate, which requires $\left(  \frac{3}{\delta}\right)^d$ points. As a covering $M$ for $\mathcal{U}$, consider the cubes of side length $|Q_n|^{1/n}$ which have vertices in the net of equidistant points. To approximate $Q \in \mathcal{U}$ with $Q \cap Q' \neq \emptyset,$ where $Q' \in \mathcal{U}_n,$ i.e., $\left| Q_n \cap Q'\right| \geq \left( 1-\frac{\delta}{2}\right)|Q_n|,$ by elements of this net, note that $Q$ is essentially contained in $Q(3)$ -up to distance $\frac{\delta}{2}$- and therefore there is a cube $\widetilde{Q} \in M$ in the covering such that the volume (or number of points) in $Q \bigtriangleup \widetilde{Q}$ is bounded by $\leq d \left(\delta |Q_n|^{1/d} \right) |Q_n|^{(d-1)/d},$ since the complements $Q \backslash \widetilde{Q} $ and $ \widetilde{Q} \backslash Q$ in each fixed dimension have at most width $\leq \delta |Q_n|^{1/d}$ and extension $|Q_n|^{1/d}$ in the remaining $(n-1)$ dimensions. It is bounded by $d\delta |Q_n|,~\left|Q\bigtriangleup \widetilde{Q}\right| \geq \left(1-d\delta \right)|Q_n| \geq \left(1-\frac{\epsilon}{2} \right)|Q_n|.$ Therefore $\mathcal{N}(\mathcal{U}, \rho, \epsilon) \leq \left( \frac{3}{\delta}\right)^d \leq C \left(\frac{6d}{\epsilon} \right)^d.$
\end{proof}
}

\begin{lem}\label{lem:distribution_unchanged_oracle}
Consider the setting from Section \ref{subsec:power} and recall that $q^*$ is the $(1-\alpha)-$quantile of $M_n\left(\mathcal Q^n\left(a_n\right)\right)$ as in \eqref{eq:Gaussian_approximation}. Then
\begin{itemize}
\item[(a)] $\max\limits_{Q \in \mathcal U} |Q|^{-\frac{1}{2}}\left|\sum_{i \in Q}v_i X_i \right| = \OP\left(1\right)$ as $n \to \infty$
\item[(b)] $\lim\limits_{n \to \infty} \P{}{\max\limits_{Q \in \mathcal T}\gamma \left(Q\right) \leq q^*} = 1-\alpha$
\end{itemize}
\end{lem}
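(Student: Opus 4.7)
The plan is to isolate the "close-to-anomaly" sets $\mathcal U$ from the remainder $\mathcal T$, to establish tightness of the Gaussian noise on $\mathcal U$ for (a), and then to exploit that on $\mathcal T$ the process $\gamma$ coincides in distribution with the null process for (b).

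For (a), consider the centered Gaussian process $Z_Q := |Q|^{-1/2}\sum_{i \in Q} v_i X_i$ on $\mathcal U$. Since $v_i \leq \bar v$ by \eqref{eq:variances}, a direct computation shows the incremental variance satisfies $\E{}{(Z_{Q_1}-Z_{Q_2})^2} \leq \bar v^2\, \rho(Q_1,Q_2)^2$, where $\rho$ is the canonical metric of Lemma \ref{lem:coveringnumberU}. The covering number bound $N(\mathcal U,\rho,\varepsilon)\leq C(6d/\varepsilon)^d$, which is polynomial in $1/\varepsilon$ and \emph{independent of $n$}, makes Dudley's entropy integral finite. Thus $\E{}{\sup_{Q\in\mathcal U} |Z_Q|} \leq C < \infty$ uniformly in $n$, and Markov's inequality gives the claim.

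For (b), the key structural observation is that $Q_n\in\mathcal U_n$ trivially (take $\varepsilon_n>0$ in the definition), so every $Q \in \mathcal T$ satisfies $Q \cap Q_n = \emptyset$. By \eqref{eq:signal} this means $\mu^n(Q)=0$, and by \eqref{eq:anomaly} each $i \in Q$ has $\theta_i^n=\theta_0$, i.e.\ $v_i=1$. Hence for every $Q \in \mathcal T$, $\gamma(Q) = \bigl||Q|^{-1/2}\sum_{i\in Q}X_i\bigr| - \pen{v}{|Q|}$, which is exactly a term of the Gaussian null process $M_n(\mathcal R_n^{\mathrm O},v)$ from \eqref{eq:Gaussian_approximation}. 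Denoting this null process by $\gamma_0$, one has the sandwich
\[
0 \;\leq\; \P{}{\max\nolimits_{Q\in\mathcal T}\gamma_0(Q) \leq q^*} \;-\; \P{}{M_n(\mathcal R_n^{\mathrm O},v) \leq q^*} \;\leq\; \P{}{\max\nolimits_{Q\in\mathcal U}\gamma_0(Q) > q^*}.
\]
Applying part (a) with $v_i\equiv 1$ gives $\max_{Q\in\mathcal U} |Q|^{-1/2}|\sum_{i\in Q}X_i| = \OP(1)$, while the single-scale penalty satisfies $\pen{v}{n^d a_n} = \sqrt{2v(\log(1/a_n)+1)} \to \infty$ because $a_n \searrow 0$. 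Hence $\max_{Q\in\mathcal U}\gamma_0(Q) \toP -\infty$, and since $q^* \leq q_{1-\alpha}^* < \infty$ (by the chain of inequalities stated just before the "Asymptotic power" paragraph), the right-hand side of the display vanishes. Combined with $\P{}{M_n(\mathcal R_n^{\mathrm O},v) \leq q^*} \to 1-\alpha$, which is the defining property of $q^*$ as guaranteed by the single-scale analog of Theorem \ref{thm:approximation}, the conclusion of (b) follows.

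The main obstacle is (a): the delicate point is that the weighted process $Z_Q$ does have spatially varying sub-Gaussian parameters (the $v_i$ jump from $1$ to $v(\theta_1^n)/v(\theta_0)$ across $\partial Q_n$), and one has to match its intrinsic metric against $\rho$ with a constant that does not depend on $n$; this is where the uniform bound $v_i \leq \bar v$ from \eqref{eq:variances} is essential. Once (a) is established, the remainder of the argument is essentially bookkeeping, resting on the fact that the diverging single-scale penalty $\sqrt{2v\log(1/a_n)}$ comfortably absorbs the $\OP(1)$ Gaussian supremum on $\mathcal U$.
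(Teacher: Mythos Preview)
Your proposal is correct and follows essentially the same route as the paper: Dudley's entropy integral combined with the $n$-free covering bound of Lemma~\ref{lem:coveringnumberU} for part~(a), and the observation $Q\cap Q_n=\emptyset$ for all $Q\in\mathcal T$ (hence $\mu^n(Q)=0$, $v_i\equiv 1$) to reduce part~(b) to the null process. Your handling of~(b) is in fact cleaner than the paper's, which writes $\max_{Q\in\mathcal U}\bigl[|Q|^{-1/2}|\sum_i v_iX_i|-\pen{v}{|Q|}\bigr]=\oP(1)$ where ``$\toP-\infty$'' is what is meant and used; one minor simplification: you do not need to invoke an analog of Theorem~\ref{thm:approximation} for $\P{}{M_n(\mathcal R_n^{\mathrm O},v)\leq q^*}\to 1-\alpha$, since $q^*$ is \emph{defined} as the $(1-\alpha)$-quantile of the continuous random variable $M_n(\mathcal R_n^{\mathrm O},v)$, so equality holds exactly for each $n$.
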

\begin{proof}

\smallskip

(a) It follows from Dudley's entropy integral \citep[see e.g.][Thm. 6.1.2]{marcus2006markov} with any fixed $Q' \in \mathcal U$ that
\begin{align*}
&\E{}{ \max\limits_{Q \in \mathcal{U}} |Q|^{-1/2} \left| \sum\limits_{i \in Q} v_i X_i\right| } \\
\leq &\E{}{|Q'|^{-1/2} \left| \sum\limits_{i \in Q'}  v_i X_i\right|} +  \E{}{\max\limits_{Q,Q' \in \mathcal{U}} \left||Q|^{-1/2} \left| \sum\limits_{i \in Q} v_i X_i\right|-|Q'|^{-1/2} \left| \sum\limits_{i \in Q'} v_i X_i\right| \right|}\\
\leq &\sqrt{\frac{2\overline{v}^2}{\pi}}+ C \int_0^{2} \sqrt{\log \mathcal{N}(\mathcal{U}, \rho, \epsilon)} \,\mathrm d\epsilon  \leq \sqrt{\frac{2\overline{v}^2}{\pi}}+ C \int_0^{2} {\sqrt{d} \sqrt{\log \left(\frac{6d}{\epsilon} \right)}} \,\mathrm d\epsilon < \infty
\end{align*}
which by Markov's inequality proves the claim.

\smallskip

(b) A direct consequence of (a) is that
\[
\max\limits_{Q \in \mathcal U} \left[|Q|^{-\frac{1}{2}}\left|\sum_{i \in Q}v_i X_i \right| - \pen{v}{\left|Q\right|}\right] = \oP(1).
\]
Furthermore note that $\mu^n(Q)=0$ and $v_i \equiv 1$, $i \in Q$ for $Q \in \mathcal{T}$. Consequently
\begin{align*}
\P{}{\max\limits_{Q \in \mathcal{T}} \gamma \left(Q\right) \leq q^*} 
=&  \P{}{\max\limits_{Q \in \mathcal{T}}\left[|Q|^{-\frac{1}{2}} \left| \sum\limits_{i \in Q} X_i\right|-\pen{v}{\left|Q\right|}\right] \leq q^* }\\
=& \P{}{\max\limits_{Q \in \mathcal Q^n(a_n)} \left[|Q|^{-\frac{1}{2}} \left| \sum\limits_{i \in Q} X_i\right|-\pen{v}{\left|Q\right|}\right] \leq q^* }+ o(1)\\
=& \P{}{M_n(\mathcal Q^n(a_n)) \leq q^*}+ o(1)
\end{align*}
which yields the claim.
\end{proof}
With this Lemma at hand, we are now in position to derive the asymptotic power of the oracle procedure:

\begin{proof}[Proof of Theorem \ref{thm:asymptPower}(a)]
To analyze $\P{\theta^n}{ T_n (Y, \theta_0, \mathcal Q^n(a_n)) > q^*}$, we start with showing a $\geq$ in the statement of Theorem \ref{thm:asymptPower}(a). By Lemma \ref{prop.approx.hyperrectangles} and the triangle inequality we can replace $T_n (Y, \theta_0, \mathcal Q^n(a_n))$ by 
\[
\max_{Q\in\mathcal Q^n\left(a_n\right)} \left[|Q|^{-\frac{1}{2}} \left|\sum_{i \in Q}\frac{Y_i-m(\theta_0)}{\sqrt{v(\theta_0)}}\right| - \pen{v}{\left|Q\right|}\right]
\]
up to $\oP(1)$. 
Furthermore Theorem \ref{lemma:Chernozhukov_NEF} allows us to approximate the latter sum by a Gaussian version, i.e.
\begin{align*}
\P{\theta^n}{ T_n (Y, \theta_0, \mathcal Q^n(a_n)) > q^*} = \P{}{\max_{Q\in\mathcal Q^n\left(a_n\right)}\gamma\left(Q\right)  > q^*} + o(1).
\end{align*}
Now we derive
\begin{align*}
&\P{}{\max_{Q\in\mathcal Q^n\left(a_n\right)}\gamma \left(Q\right) > q^*} \\
= &\P{}{\left\{\max_{Q\in\mathcal Q^n\left(a_n\right)}\gamma \left(Q\right) > q^*\right\} \cap \left\{\max_{Q\in\mathcal T}\gamma \left(Q\right) \leq q^* \right\}} \\
& + \P{}{\left\{\max_{Q\in\mathcal Q^n\left(a_n\right)}\gamma \left(Q\right) > q^*\right\} \cap \left\{\max_{Q\in\mathcal T}\gamma \left(Q\right) > q^* \right\}}\\
= &\P{}{\left\{\max_{Q \in \mathcal T}\gamma \left(Q\right) \leq q^*\right\} \cap \left\{\max_{Q\in\mathcal U}\gamma \left(Q\right) > q^* \right\}} + \P{}{\max_{Q\in\mathcal T}\gamma \left(Q\right) > q^*}\\
\geq& \P{}{\left\{\max_{Q \in \mathcal T}\gamma \left(Q\right) \leq q^*\right\} \cap \left\{\gamma \left(Q_n\right) > q^* \right\}} + \P{}{\max_{Q\in\mathcal T}\gamma \left(Q\right) > q^*}\\
= &\P{}{\max_{Q \in \mathcal T}\gamma \left(Q\right) \leq q^*} \P{}{\gamma \left(Q_n\right) > q^*} + \P{}{\max_{Q\in\mathcal T}\gamma \left(Q\right) > q^*}
\end{align*}
where we exploited $Q_n \in \mathcal U$ and independence of $\left\{\gamma\left(Q\right)\right\}_{Q \in \mathcal T}$ and $\gamma \left(Q_n\right)$. Lemma \ref{lem:distribution_unchanged_oracle}(b) states that $\P{}{\max_{Q \in \mathcal T}\gamma \left(Q\right) \leq q^*} = 1-\alpha + o(1)$ and hence
\[
\P{\theta^n}{ T_n (Y, \theta_0, \mathcal Q^n(a_n)) > q^*} \geq \alpha + \left(1-\alpha\right) \P{}{\gamma \left(Q_n\right) > q^*}+ o(1).
\]
Furthermore note that $\gamma \left(Q_n\right) + \pen{v}{\left|Q_n\right|}$ follows a folded normal distribution with parameters $\mu = \mu^n\left(Q_n\right)$ and $\sigma^2 = \left|Q_n\right|^{-1} \sum_{i \in Q_n} v_i^2$, this is
\[
\gamma \left(Q_n\right)\sim \left|\mathcal N \left(\mu, \sigma^2\right)\right| - \pen{v}{\left|Q_n\right|}.
\]
We compute
\begin{align}
\mu^n\left(Q_n\right) &= \sqrt{n^d a_n}   2,5 C\frac{m\left(\theta_1^n\right) - m\left(\theta_0\right)}{\sqrt{v\left(\theta_0\right)}} \left(1+o\left(1\right)\right), \label{eq:munSn}\\
\left|Q_n\right|^{-1} \sum_{i \in Q_n} v_i^2 &=  \frac{v\left(\theta_1^n\right)}{v\left(\theta_0\right)}, \label{eq:varianceSn}\\
\pen{v}{\left|Q\right|} & = \sqrt{2v \log\left(a_n^{-1}\right)} + o(1) \quad\text{for all}\quad Q \in \mathcal Q^n \left(a_n\right), \label{eq:pen}
\end{align}
which yields by continuity of $F$ and $Q_n \in \mathcal Q^n\left(a_n\right)$ the proposed lower bound. For the upper bound (i.e. $\leq$ in the statement of Theorem \ref{thm:asymptPower}(a)) we proceed as before and obtain
\begin{align*}
&\P{\theta^n}{ T_n (Y, \theta_0, \mathcal Q^n(a_n)) > q^*}\\
= &\alpha + \P{}{\left\{\max_{Q \in \mathcal T}\gamma \left(Q\right) \leq {q^*}\right\} \cap \left\{\max_{Q\in\mathcal U}\gamma \left(Q\right) > q^* \right\}} + o(1)\\
= &\alpha + \P{}{\left\{\max_{Q\in \mathcal T}\gamma \left(Q\right) \leq q^*\right\} \cap \left\{\max_{Q\in\mathcal U}\gamma \left(Q\right) > q^* \right\}\cap \left\{\max\limits_{Q \in \mathcal U_n} \gamma \left(Q\right) > q^* \right\}} \\
& +  \P{}{\left\{\max_{Q \in \mathcal T}\gamma \left(Q\right) \leq q^*\right\} \cap \left\{\max_{Q\in\mathcal U}\gamma \left(Q\right) > q^* \right\}\cap \left\{\max\limits_{Q \in \mathcal U_n} \gamma \left(Q\right) \leq q^* \right\}} + o(1)\\
\leq& \alpha + \P{}{\left\{\max_{Q \in \mathcal T}\gamma \left(Q\right) \leq q^*\right\} \cap \left\{\max\limits_{Q \in \mathcal U_n} \gamma \left(Q\right) > q^* \right\}} +  \P{}{\max_{Q\in \mathcal U \setminus \mathcal U_n}\gamma \left(Q\right) > q^*} + o(1)\\
=& \alpha + \left(1-\alpha\right) \P{}{\max\limits_{Q \in \mathcal U_n} \gamma \left(Q\right) > q^*} +  \P{}{\max_{Q \in \mathcal U \setminus \mathcal U_n}\gamma \left(Q\right) > q^*} + o(1)
\end{align*}
where we used independence of $\left\{\gamma \left(Q\right)\right\}_{Q \in \mathcal T}$ and $\left\{\gamma \left(Q\right)\right\}_{Q \in \mathcal U_n}$. From Lemma \ref{lem:distribution_unchanged_oracle}(a) we obtain
\[
\max_{Q \in \mathcal U \setminus \mathcal U_n} \left||Q|^{-\frac{1}{2}}\sum_{i \in Q}v_i X_i \right| \leq \max_{Q \in \mathcal U} \left||Q|^{-\frac{1}{2}}\sum_{i \in Q}v_i X_i \right|  = \OP\left(1\right)
\]
and further by definition of $\mathcal U_n$ that $\mu^n \left(Q\right) \leq \left(1-\varepsilon_n\right) \mu^n \left(Q_n\right)$ for all $Q \in \mathcal U \setminus \mathcal U_n$. Exploiting \eqref{eq:pen} this implies
\begin{align*}
&\P{}{\max_{Q \in \mathcal U \setminus \mathcal U_n}\gamma \left(Q\right) > q^*} \\
\leq &\P{}{\left(1-\varepsilon_n\right) \mu^n \left(Q_n\right) + \OP(1) - \sqrt{2 \log\left(a_n^{-1}\right)} > q^*}\\
= &\P{}{\mu^n \left(Q_n\right) - \sqrt{2 \log\left(a_n^{-1}\right)} - \varepsilon_n \mu^n\left(Q_n\right) + \OP(1)> q^*} = o(1)
\end{align*}
if $\mu^n\left(Q_n\right) - \sqrt{2 \log\left(a_n^{-1}\right)} \to C \in \left[-\infty, \infty\right)$ (as $\varepsilon_n \mu^n\left(Q_n\right) \to \infty$ by construction), and if $\mu^n\left(Q_n\right) - \sqrt{2 \log\left(a_n^{-1}\right)} \to \infty$, {the power trivially converges to $1$}. Altogether this gives
\[
\P{\theta^n}{ T_n (Y, \theta_0, \mathcal Q^n(a_n)) > q^*} \leq \alpha + \left(1-\alpha\right) \P{}{\max\limits_{Q \in \mathcal U_n} \gamma \left(Q\right) > q^*}  + o(1).
\]
With similar arguments as in Lemma \ref{lem:distribution_unchanged_oracle} we obtain from $\varepsilon_n \searrow 0$ that
\[
\P{}{\max\limits_{Q \in \mathcal U_n} \gamma \left(Q\right) > q^*} = \P{}{\gamma\left(Q_n\right) + \oP(1)  > q^*}
\]
and hence the claim is proven.
\end{proof}

Now we turn to the multiscale procedure. As here different scales are considered, the set $\mathcal U$ is not large enough any more. {Specifically}, we cannot construct a subset $\mathcal V$ such that $\left\{\gamma\left(Q\right)\right\}_{\mathcal V^c}$ and $\gamma\left(Q_n\right)$ are independent and $\max_{Q \in \mathcal V}\gamma \left(Q\right)$ is still negligible. Due to this, the corresponding proof in \citet{sac16} is incomplete. To overcome this difficulty, we follow the idea to distinguish if the anomaly $Q_n$ has asymptotically an effect on $\gamma \left(Q\right)$ or not. Whenever $Q$ is sufficiently large compared to $Q_n$, the impact will asymptotically be negligible. 

For some sequence $\epsilon_n \searrow 0$ with $\epsilon_n = O \left(\left|Q_n\right|^{-\gamma}\right)$ with some $\gamma>0$ we introduce
\begin{align}
\delta_n &:= \epsilon_n \max\left\{\mu^n \left(Q_n\right), \log(n)\sqrt{\frac{\left|Q_n\right|}{r_n}} \right\}^{-1}, \label{eq:deltan}\\
\mathcal V &:= \left\{ Q \in {\mathcal Q_{n| r_n}^{\mathrm{MS}} } ~\big|~\mu^n \left(Q\right) \geq \delta_n \mu^n\left(Q_n\right)\right\} \nonumber
\end{align}
and its complement $\mathcal T' := {\mathcal Q_{n| r_n}^{\mathrm{MS}} }\setminus \mathcal V$. {The precise definition of $\delta_n$ is the result of terms which have to vanish in the following Lemma \ref{lem:distribution_unchanged_MS}}. For a sketch see Figure \ref{fig:multiscale}. 

\begin{figure}[!htb]
\centering
\begin{tikzpicture}[scale = .05]

\coordinate (A) at (0,0);
\coordinate (B) at (100,100);

\coordinate (C) at (40,40);
\coordinate (D) at (50,50);

\coordinate (E) at (37,37);
\coordinate (F) at (51,51);

\coordinate (G) at (43,43);
\coordinate (H) at (53,53);

\coordinate (I) at (6,6);
\coordinate (J) at (43,43);

\coordinate (K) at (49,31);
\coordinate (L) at (59,41);

\coordinate (M) at (10,90);
\coordinate (N) at (44,56);

\coordinate (O) at (40,40);
\coordinate (P) at (43,43);

\coordinate (Q) at (49,40);
\coordinate (R) at (50,41);

\draw (A) rectangle (B);
\draw[fill=red] (C) rectangle (D);

\draw[pattern = north east lines] (E) rectangle (F);
\draw[pattern = north east lines] (G) rectangle (H);

\draw[pattern = crosshatch dots] (I) rectangle (J);
\draw[pattern = crosshatch dots] (K) rectangle (L);
\draw[pattern = crosshatch dots] (M) rectangle (N);

\draw[fill=black] (O) rectangle (P);
\draw[fill=black] (Q) rectangle (R);

\end{tikzpicture} 
\caption{Exemplary elements of the sets $\mathcal V$ and $\mathcal T'$ in $d = 2$: The anomaly is shown in red, the hatched cubes belong to $\mathcal V$ and the dotted cubes to $\mathcal T'$. However, the intersections marked in black are small enough such that they have asymptotically no influence on $\gamma\left(Q\right)$.}
\label{fig:multiscale}
\end{figure}
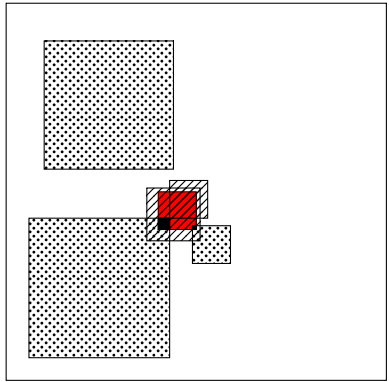

{
Contrary to the oracle procedure, we do not have independence of $\left\{\gamma\left(Q\right)\right\}_{Q \in \mathcal T'}$ and $\gamma\left(Q_n\right)$. However, asymptotically a similar property is true as shown in the following Lemma \ref{lem:distribution_unchanged_MS}. \\
Let us again start with bounding the covering number $N(\mathcal{V}, \rho, \epsilon)$ w.r.t.\ the canonical metric $\rho\left(Q,Q'\right)^2 = 2-2\left|Q\cap Q'\right|/\sqrt{\left|Q\right|\left|Q'\right|}$.

\begin{lem}\label{lem:coveringnumberV}
There exists a constant $C$ such that for any $\epsilon>0$ we have
\begin{align*}
N(\mathcal{V}, \rho, \epsilon) \leq C \left( \frac{6d}{\epsilon}\right)^d \frac{\left|Q_n\right|^{d+1}}{\delta_n^{2 (d+1)}}.
\end{align*} 
\end{lem}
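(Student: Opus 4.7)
The argument is a two-parameter extension of the single-scale net of Lemma \ref{lem:coveringnumberU}: one discretizes both the lower corner and the side length of cubes in $\mathcal V$, exploiting the scale constraints imposed by the definition of $\mathcal V$.

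First, I would read off the admissible parameter ranges. Writing $s := |Q|^{1/d}$ and $h_n := |Q_n|^{1/d}$, the defining inequality $\mu^n(Q) \ge \delta_n \mu^n(Q_n)$ rewrites as $|Q \cap Q_n|/\sqrt{|Q|} \ge \delta_n\sqrt{|Q_n|}$, and combining with $|Q\cap Q_n| \le \min(|Q|,|Q_n|)$ gives $s \in [\delta_n^{2/d} h_n,\, \delta_n^{-2/d} h_n]$. The requirement of a non-empty overlap with $Q_n$ also confines the lower corner of $Q$ to an $L^\infty$-box of side at most $h_n + s \le 2\delta_n^{-2/d} h_n$ around the center of $Q_n$. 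So both the scale and the position of any $Q \in \mathcal V$ live in bounded ranges parametrized by $h_n$ and $\delta_n$.

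Second, I would build a joint grid: a multiplicative scale grid $\{s_j = \delta_n^{2/d} h_n(1+\eta)^j\}$ with ratio $1+\eta$, and at each $s_j$ an additive position grid of spacing $\eta s_j$ inside the admissible position box. With $\eta = c\epsilon^2/d$ for a small constant $c$, the usual volume estimates give $|Q \triangle \hat Q| \le Cd\eta s^d$ and hence $\rho(Q,\hat Q)^2 = 2 - 2|Q\cap\hat Q|/\sqrt{|Q||\hat Q|} \le Cd\eta \le \epsilon^2$ uniformly in $s$, when $Q$ is snapped first to the nearest scale $s_j$ and then to the nearest grid corner.

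Third, I would count. The number of scales is $\mathcal O(\eta^{-1}\log(1/\delta_n))$, and the number of positions at scale $s_j$ is at most $\bigl((h_n+s_j)/(\eta s_j)\bigr)^d \le (C/\eta)^d \max(h_n/s_j,1)^d$; the geometric series in $s_j$ is dominated by the smallest-scale contribution and yields an overall bound of order $(C/\epsilon^2)^{d+1}\delta_n^{-2}\log(1/\delta_n)$. This is absorbed into the (intentionally loose) stated form $C(6d/\epsilon)^d |Q_n|^{d+1}/\delta_n^{2(d+1)}$ using $|Q_n|^{d+1} \ge 1$ and the slack factor $\delta_n^{-2(d+1)} \gg \delta_n^{-2}$.

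The main obstacle is ensuring $\rho$-accuracy uniformly across the multiplicative scale range: since $\rho^2$ scales like $g/s$ for a position perturbation $g$ and like $\eta$ for a multiplicative scale perturbation $1+\eta$, both must be kept proportional to $\epsilon^2$, and the additive grid spacing must be scale-adapted (proportional to $s_j$, not fixed as in the monoscale setting of Lemma \ref{lem:coveringnumberU}). The corresponding geometric series over scales is dominated by the smallest admissible scale $\delta_n^{2/d} h_n$, which is exactly what produces the negative powers of $\delta_n$ in the stated bound.
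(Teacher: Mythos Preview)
Your construction is sound and follows the paper's high-level plan: read off the admissible scale range from the defining inequality of $\mathcal V$, then cover position and scale jointly. The paper executes the second step more crudely than you do: instead of a multiplicative scale grid, it simply enumerates every integer volume $k$ in the range $r_n \le k \le \delta_n^{-2}|Q_n|$ and, at each $k$, invokes a Lemma~\ref{lem:coveringnumberU}-style position grid with a loose per-scale bound of $C(6dk/\epsilon)^d$; the sum $\sum_k k^d \lesssim (\delta_n^{-2}|Q_n|)^{d+1}$ then produces the factor $|Q_n|^{d+1}\delta_n^{-2(d+1)}$ directly. Your scale-adapted spacing is cleaner and delivers a tighter $(|Q_n|,\delta_n)$-dependence.

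There is, however, a real problem with your final absorption step. Your intermediate bound carries $\epsilon^{-2(d+1)}$, which is a \emph{larger} power of $1/\epsilon$ than the $\epsilon^{-d}$ in the stated inequality. Since the lemma must hold for all $\epsilon>0$ with a single constant $C$, no slack in $|Q_n|$ or $\delta_n$ can absorb the discrepancy as $\epsilon\to 0$. So your argument, as written, does not establish the lemma in its literal form.

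This is not fatal downstream: the lemma is only used inside Dudley's entropy integral in Lemma~\ref{lem:distribution_unchanged_MS}(a), where what is needed is that $\int_0^2\sqrt{\log N(\mathcal V,\rho,\epsilon)}\,\mathrm d\epsilon$ be of order $\sqrt{\log(|Q_n|/\delta_n^2)}$. Your bound gives $\log N \lesssim (d+1)\log(1/\epsilon^2) + \log(1/\delta_n^2)$, which integrates to exactly that. If you insist on the stated $\epsilon$-power, replace your multiplicative scale grid by the paper's additive enumeration of discrete scales; the $\epsilon^{-2}$ you currently spend on scale discretization then becomes a scale count bounded by $\delta_n^{-2}|Q_n|$, independent of $\epsilon$.
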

\begin{proof}
For all $Q \in \mathcal V$ defined in \eqref{eq:deltan} it holds $\mu^n \left(Q\right) \geq \delta_n \mu^n\left(Q_n\right)$, which implies
\[
\delta_n \sqrt{\left|Q_n\right|} \leq \frac{\left|Q \cap Q_n\right|}{\sqrt{\left|Q\right|}} \leq \frac{\left|Q_n\right|}{\sqrt{\left|Q\right|}}.
\]
Consequently, $\mathcal V$ contains only cubes $Q$ with $r_n \leq \left|Q\right| \leq \delta_n^{-2} \left|Q_n\right|$. For a fixed scale $k$, $\mathcal V$ contains at most $(C \left|Q_n\right|)$ $Q$'s with $\left|Q\right| = k$, and for the set of such $Q$'s an $\sqrt{\epsilon}$-covering can be constructed as in the proof of Lemma \ref{lem:coveringnumberU} with at most $C \left( \frac{6d}{\epsilon}k\right)^d$
elements, which gives
\[
N(\mathcal{V}, \rho, \epsilon) \leq C\sum\limits_{k=r_n}^{\lfloor \delta_n^{-2} \left|Q_n\right|\rfloor} \left( \frac{6d}{\epsilon}k\right)^d \leq C \left( \frac{6d}{\epsilon}\right)^d \frac{\left|Q_n\right|^{d+1}}{\delta_n^{2 (d+1)}}.
\]
\end{proof}
}

\begin{lem}\label{lem:distribution_unchanged_MS}
Consider the setting from Section \ref{subsec:power} and recall that $q$ is the $(1-\alpha)-$quantile of $M_n \left( {\mathcal Q_{n| r_n}^{\mathrm{MS}} }\right)$ as in \eqref{eq:Gaussian_approximation}. Then the following statements hold true as $n \to \infty$:
\begin{itemize}
\item[(a)] $\max\limits_{Q \in \mathcal V} \left||Q|^{-\frac{1}{2}}\sum_{i \in Q}v_i X_i \right| = \OP\left(\sqrt{\ln\left(\left|Q_n\right|\right)} + \sqrt{\ln\left(-\ln\left(m\left(\theta_1^n\right)-m\left(\theta_0\right)\right)\right)}\right)$
\item[(b)] $\max\limits_{Q \in \mathcal T'} \left| |Q|^{-\frac{1}{2}} \sum\limits_{i \in Q \cap Q_n} v_i X_i\right|  = \oP(1)$
\item[(c)] $\P{}{\max\limits_{Q \in \mathcal T'}\gamma \left(Q\right) \leq q} = 1-\alpha + o(1)$
\end{itemize}
\end{lem}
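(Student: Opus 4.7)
The three parts will be proved in order, each reusing the previous one.

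\emph{Part (a).} I mirror the Dudley-integral argument from Lemma \ref{lem:distribution_unchanged_oracle}(a). The process $Q\mapsto|Q|^{-1/2}\sum_{i\in Q}v_iX_i$ is centred Gaussian with variance at most $\bar v^2$, so its canonical metric is dominated by $\bar v\rho$. Combining Dudley's entropy integral with the covering bound of Lemma \ref{lem:coveringnumberV} gives
\begin{align*}
\E{}{\max_{Q\in\mathcal V}|Q|^{-1/2}\bigl|\textstyle\sum_{i\in Q}v_iX_i\bigr|}\lesssim 1+\int_0^{C}\sqrt{d\log\tfrac{6d}{\epsilon}+(d+1)\log|Q_n|+2(d+1)\log\tfrac{1}{\delta_n}}\,d\epsilon.
\end{align*}
Splitting under the square root and performing the $\epsilon$-integration yields a bound of order $\sqrt{\log|Q_n|}+\sqrt{\log(1/\delta_n)}$. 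Plugging in the definition \eqref{eq:deltan} of $\delta_n$ together with $\mu^n(Q_n)=\sqrt{|Q_n|}(m(\theta_1^n)-m(\theta_0))/\sqrt{v(\theta_0)}$, and using $\log\max\{a,b\}\leq\log a+\log b$ for $a,b\geq 1$, the second summand converts into the form claimed, and Markov's inequality closes (a).

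\emph{Part (b).} By definition of $\mathcal T'$ and \eqref{eq:signal}, every $Q\in\mathcal T'$ satisfies $|Q\cap Q_n|\leq\delta_n\sqrt{|Q|\,|Q_n|}$. Using \eqref{eq:variances},
\begin{align*}
\operatorname{Var}\Bigl(|Q|^{-1/2}\textstyle\sum_{i\in Q\cap Q_n}v_iX_i\Bigr)\leq\bar v^2\frac{|Q\cap Q_n|}{|Q|}\leq\bar v^2\delta_n\sqrt{|Q_n|/|Q|}\leq\bar v^2\delta_n\sqrt{|Q_n|/r_n},
\end{align*}
and the choice \eqref{eq:deltan} of $\delta_n$ enforces $\delta_n\sqrt{|Q_n|/r_n}\leq\epsilon_n/\log n$. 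Since $\#\mathcal T'\leq c_1n^{c_2}$ by Assumption \ref{ass:R_card}, the standard Gaussian maximum bound \eqref{eq:Gaussian_partial_sums_max} yields
\begin{align*}
\E{}{\max_{Q\in\mathcal T'}|Q|^{-1/2}\bigl|\textstyle\sum_{i\in Q\cap Q_n}v_iX_i\bigr|}\lesssim\sqrt{\log n}\cdot\sqrt{\epsilon_n/\log n}=\sqrt{\epsilon_n}\to 0,
\end{align*}
whence the $\oP(1)$ statement.

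\emph{Part (c).} Decompose $v_iX_i=X_i+(v_i-1)X_i$, noting that $v_i-1=0$ off $Q_n$ and $|v_i-1|$ is uniformly bounded by \eqref{eq:variances}. Applying (b) with $v_i$ replaced by $v_i-1$ gives
\begin{align*}
\max_{Q\in\mathcal T'}\bigl||Q|^{-1/2}\textstyle\sum_{i\in Q}v_iX_i-|Q|^{-1/2}\textstyle\sum_{i\in Q}X_i\bigr|=\oP(1).
\end{align*}
Together with $\max_{Q\in\mathcal T'}|\mu^n(Q)|\leq\delta_n\mu^n(Q_n)\leq\epsilon_n=o(1)$ and the reverse triangle inequality, this gives
\begin{align*}
\max_{Q\in\mathcal T'}\gamma(Q)=\max_{Q\in\mathcal T'}\Bigl[\bigl||Q|^{-1/2}\textstyle\sum_{i\in Q}X_i\bigr|-\pen{v}{|Q|}\Bigr]+\oP(1).
\end{align*}
The main obstacle is then to show that the contribution of $\mathcal V$ is negligible, so that the right-hand side agrees with $M_n$ up to $\oP(1)$. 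For this, part (a) applied with $v_i\equiv 1$ controls the Gaussian supremum on $\mathcal V$ by $\OP(\sqrt{\log|Q_n|}+\sqrt{\log(1/\delta_n)})$, while the constraint $|Q|\leq\delta_n^{-2}|Q_n|$ inherent to $\mathcal V$ gives $\pen{v}{|Q|}\geq\sqrt{2v\log(\delta_n^2/a_n)}$. The assumption $a_n=o(n^{\beta-d})$ for sufficiently small $\beta>0$, combined with $\log(1/\delta_n)\lesssim\beta\log n+\mathrm{poly}(\log\log n)$ (derived from \eqref{eq:deltan} and the LSB \eqref{eq:r_n}), forces the penalty to dominate uniformly, so that $\max_{Q\in\mathcal V}[\,\cdot\,]\to-\infty$ in probability. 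Consequently $M_n$ is attained on $\mathcal T'$ with probability $1-o(1)$, and the definition of $q$ yields $\P{}{\max_{Q\in\mathcal T'}\gamma(Q)\leq q}=\P{}{M_n\leq q}+o(1)=1-\alpha+o(1)$, finishing (c).
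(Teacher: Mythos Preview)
Your proof is correct and follows essentially the same route as the paper: Dudley's entropy integral with Lemma \ref{lem:coveringnumberV} for (a), the variance bound $|Q\cap Q_n|/|Q|\leq\delta_n\sqrt{|Q_n|/r_n}$ plus a Gaussian-maximum estimate for (b), and then in (c) stripping off $\mu^n$ and the $Q\cap Q_n$ contribution before showing the $\mathcal V$-part is dominated by its penalty. The only cosmetic difference is your decomposition $v_iX_i=X_i+(v_i-1)X_i$ in (c), where the paper instead splits $Q$ into $Q\setminus Q_n$ and $Q\cap Q_n$; both are equivalent, and your phrasing ``$\max_{Q\in\mathcal V}[\,\cdot\,]\to-\infty$'' is in fact the precise statement needed (the paper writes $\oP(1)$ somewhat loosely at this step).
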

\begin{proof}

\smallskip

(a) Again with the help of Dudley's entropy integral we find
{\begin{align*}
&\E{}{ \max\limits_{Q \in \mathcal{V}} |Q|^{-1/2} \left| \sum\limits_{i \in Q} v_i X_i\right| } \\
\leq &\sqrt{\frac{2\overline{v}^2}{\pi}}+ C_1 \int_0^{2} \sqrt{\log \mathcal{N}(\mathcal{V}, \rho, \epsilon)} \,\mathrm d\epsilon \\
\leq & C_2 \left( \sqrt{ \ln \left( \frac{\left|Q_n\right|}{\delta_n^2}\right) }\right)\\
\leq & C_3 \left(\sqrt{\ln\left(\left|Q_n\right|\right)} + \sqrt{\ln\left(\left|m\left(\theta_1^n\right)-m\left(\theta_0\right)\right|\right)}\right). 
\end{align*}}
Now Markov's inequality gives the claim.

\smallskip

(b) For $Q \in \mathcal T'$ it holds $\mu^n \left(Q\right) < \delta_n \mu^n \left(Q^n\right)$ and hence $\left|Q \cap Q_n\right| \leq \delta_n \sqrt{\left|Q\right|\left|Q_n\right|}$. Consequently
\begin{align*}
&\E{}{ \max\limits_{Q \in \mathcal{T}'} |Q|^{-1/2} \left| \sum\limits_{i \in Q \cap Q_n} v_i X_i\right| }\\
&{=\E{}{ \max\limits_{Q \in \mathcal{T}'} \sqrt{\frac{|Q\cap Q_n|}{|Q|}} \frac{1}{\sqrt{|Q\cap Q_n|}} \left| \sum\limits_{i \in Q \cap Q_n} v_i X_i\right| }}\\
&{\leq\E{}{ \max\limits_{Q \in \mathcal{T}'} \sqrt{\frac{ \delta_n \sqrt{\left|Q_n\right|}}{\sqrt{|Q|}}} \frac{1}{\sqrt{|Q\cap Q_n|}} \left| \sum\limits_{i \in Q \cap Q_n} v_i X_i\right| }}\\
\leq & \sqrt{\delta_n} \left(\frac{\left|Q_n\right|}{r_n}\right)^{\frac14} \E{}{ \max\limits_{Q \in \mathcal Q_n|r_n} |Q\cap Q_n|^{-1/2} \left| \sum\limits_{i \in Q \cap Q_n} v_i X_i\right| }\\
\leq & C\bar v\sqrt{\delta_n} \left(\frac{\left|Q_n\right|}{r_n}\right)^{\frac14} \sqrt{\log(n)}
\end{align*}
where we used \eqref{eq:Gaussian_partial_sums_max}. As the right-hand side converges to $0$ by \eqref{eq:deltan}, this proves the claim.

\smallskip

(c) This can now be deduced from (a) and (b) as follows. For all $Q \in \mathcal T'$ it holds $\mu^n \left(Q\right) \leq \delta_n \mu^n \left(Q_n\right)$ and hence
\begin{align}
&\max\limits_{Q \in \mathcal T'} \gamma\left(Q\right) - \max\limits_{Q \in \mathcal T'} \left[ \left| \left|Q\right|^{-\frac12} \sum\limits_{i \in Q \setminus Q_n} v_i X_i \right| - \pen{v}{\left|Q\right|}\right] \nonumber\\
\leq & \max\limits_{Q \in \mathcal T'} \left[\mu^n \left(Q\right) + \left| \left|Q\right|^{-\frac12} \sum\limits_{i \in Q \cap Q_n} v_i X_i\right| \right] \nonumber\\
\leq & \delta_n \mu^n \left(Q_n\right) + \max\limits_{Q \in \mathcal T'} \left| \left|Q\right|^{-\frac12} \sum\limits_{i \in Q \cap Q_n} v_i X_i\right|  \stackrel{\mathrm{(b)}}{=}  \oP(1) \label{eq:aux5}
\end{align}
where the last estimate follows from $\delta_n \mu^n \left(Q_n\right) \searrow 0$. Furthermore, as $\mathcal V$ contains only scales $\leq \delta_n^{-2} \left|Q_n\right|$ we obtain that
\begin{align}
&\max\limits_{Q \in \mathcal V} \left[ \left| \left|Q\right|^{-\frac12} \sum\limits_{i \in Q}X_i \right| - \pen{v}{\left|Q\right|}\right] \nonumber\\
\leq &\max\limits_{Q \in \mathcal V} \left[ \left| \left|Q\right|^{-\frac12} \sum\limits_{i \in Q}X_i \right| - \pen{v}{\delta_n^{-2} \left|Q_n\right|}\right] \nonumber\\[0.1cm]
\stackrel{\mathrm{(a)}}{=}& \OP \left(\sqrt{\ln\left(\left|Q_n\right|\right)} + \sqrt{\ln\left(-\ln\left(m\left(\theta_1^n\right)-m\left(\theta_0\right)\right)\right)}\right) - \pen{v}{\delta_n^{-2} \left|Q_n\right|} \nonumber\\[0.1cm] 
= &\oP(1), \label{eq:aux6}
\end{align}
where we used {$\left|Q_n\right| = o\left(n^\beta\right)$} with {$\beta>0$} sufficiently small. Consequently
\begin{align*}
& \P{}{\max\limits_{Q \in \mathcal T'} \gamma\left(Q\right)\leq q} \\
\stackrel{\eqref{eq:aux5}}{\quad=\quad}& \P{}{\max\limits_{Q \in \mathcal T'} \left[ \left| \left|Q\right|^{-\frac12} \sum\limits_{i \in Q \setminus Q_n} v_i X_i \right| - \pen{v}{\left|Q\right|}\right] \leq q} + o(1)\\
\stackrel{\mathrm{(b)}}{\quad=\quad}& \P{}{\max\limits_{Q \in \mathcal T'} \left[ \left| \left|Q\right|^{-\frac12} \sum\limits_{i \in Q}X_i \right| - \pen{v}{\left|Q\right|}\right] \leq q}  + o(1)\\ 
\stackrel{\eqref{eq:aux6}}{\quad=\quad}& \P{}{\max\limits_{Q \in {\mathcal Q_{n| r_n}^{\mathrm{MS}} }} \left[ \left| \left|Q\right|^{-\frac12} \sum\limits_{i \in Q}X_i \right| - \pen{v}{\left|Q\right|}\right] \leq q}  + o(1)\\ 
\quad=\quad& \P{}{M_n \left({\mathcal Q_{n| r_n}^{\mathrm{MS}} }\right) \leq q}  + o(1)
\end{align*}
which yields the claim.
\end{proof}

\begin{proof}[Proof of Theorem \ref{thm:asymptPower}(b)]
For the multiscale procedure we have to compute a lower bound for $\P{\theta^n}{ T_n (Y, \theta_0, {\mathcal Q_{n| r_n}^{\mathrm{MS}} }) > q}$. Similar to the Proof of Theorem \ref{thm:asymptPower}(a) we obtain
\begin{align*}
&\P{\theta^n}{ T_n (Y, \theta_0, {\mathcal Q_{n| r_n}^{\mathrm{MS}} }) > q} \\
\geq& \P{}{\left\{\max_{Q \in \mathcal T'}\gamma \left(Q\right) \leq q\right\} \cap \left\{\gamma \left(Q_n\right) > q \right\}} + \P{}{\max_{Q\in\mathcal T'}\gamma \left(Q\right) > q} + o(1).
\end{align*}
By Lemma \ref{lem:distribution_unchanged_MS}(b) we furthermore get
\begin{align*}
&\P{}{\max_{Q \in \mathcal T'}\gamma \left(Q\right) \leq q} \\
=& \P{}{\max_{Q \in \mathcal T'}\left[\left| \mu^n \left(Q\right) + \frac{1}{\sqrt{\left|Q\right|}} \sum\limits_{i \in Q} v_i X_i \right| - \pen{v}{\left|Q\right|}\right] \leq q} \\
=& \P{}{\max_{Q \in \mathcal T'}\left[\left| \mu^n \left(Q\right) + \frac{1}{\sqrt{\left|Q\right|}} \sum\limits_{i \in Q \setminus Q_n} v_i X_i \right| - \pen{v}{\left|Q\right|}\right] \leq q} + o(1),
\end{align*}
which shows by independence that
\begin{align*}
&\P{}{\left\{\max_{Q \in \mathcal T'}\gamma \left(Q\right) \leq q\right\} \cap \left\{\gamma \left(Q_n\right) > q \right\}}\\
=& \P{}{\max_{Q \in \mathcal T'}\gamma \left(Q\right) \leq q} \P{}{\gamma \left(Q_n\right) > q } + o(1).
\end{align*}
Now the proof can be concluded as the one of Theorem \ref{thm:asymptPower}(a).
\end{proof}

\begin{proof}[Proof of Corollary \ref{cor:power}]
The procedures have asymptotic power $1$ if and only if 
\[
F\left(\bar q + \sqrt{-2{v}\log\left(a_n\right)},n^{d/2}\sqrt{a_n}\frac{m\left(\theta_1^n\right) - m\left(\theta_0\right)}{\sqrt{v\left(\theta_0\right)}},\frac{v\left(\theta_1^n\right)}{v\left(\theta_0\right)}\right) \to 1
\]
as $n \to \infty$ with $\bar q \in \left\{q^*,q\right\}$ respectively. The straight-forward estimate
\[
F\left(x,\mu,\sigma^2\right) \geq\max\left\{\Phi\left(\frac{-x-\mu}{\sigma}\right), \Phi\left(\frac{\mu-x}{\sigma}\right) \right\}
\]
shows that this is the case if and only if
\[
\frac{x+\mu}{\sigma} \to -\infty \qquad\text{or}\qquad\frac{x-\mu}{\sigma} \to -\infty.
\]
Inserting the values for $x$, $\mu$ and $\sigma$ and noting that $q^*,q$ are uniformly bounded by the $(1-\alpha)-$quantile of $M\left(\mathcal Q^*,v\right)$ gives the claim.
\end{proof}

\appendix

\section{Packing numbers of Example \ref{ex:sets}}\label{appA}
The computation of the packing numbers given in Example \ref{ex:sets} will be done by means of the covering number. The covering number $\mathcal N \left(\epsilon, \rho, \mathcal W\right)$ of a subset $\mathcal W \subset \mathcal R^*$ w.r.t. a metric $\rho$ is given by the minimal number of balls of radius $\epsilon>0$ needed to cover $\mathcal W$ \citep[cf.][Def. 2.2.3]{VaartWellner:1996}. It is immediately clear, that
\[
\mathcal N \left(\epsilon, \rho, \mathcal W\right) \leq \mathcal K \left(\epsilon, \rho, \mathcal W\right) \leq \mathcal N \left(\frac{\epsilon}{2}, \rho, \mathcal W\right),
\]
and hence it sufficed to compute $\mathcal N \left(\left(\delta u\right)^{1/2}, \rho^*, \left\{R \in \mathcal R^* ~\big|~\left| R \right| \leq \delta\right\}\right)$ with $\rho^*$ as in \eqref{eq:metric} to show \eqref{eq:capacity_bound}. In the following, we will use the notation from Example \ref{ex:sets}.

\begin{lem}\label{lem:packing_hyperrectangles}
For any $\epsilon>0$ there exists a constant $C$ depending only on the dimension $d$ and $\epsilon$ such that for all $u, \delta \in \left(0,1\right]$ it holds
\begin{align*}
\mathcal{K}\left((\delta u)^{1/2}, \rho^*, \{S \in \mathcal{S}^*: |S|\leq \delta\}\right)
&\leq {C_1 \delta^{-(2d-1)} \left( \log (1/\delta)\right)^{d-1} u^{-2d} \left( \log (1/u)\right)^{d-1}}\\
&\leq  C u^{-(2d+\epsilon)}\delta^{-(2d-1+\epsilon)},
\end{align*}
i.e. \eqref{eq:capacity_bound} holds true with $k_1 = C, k_2 = 2d+\epsilon$ and $V_{\mathcal S^*} = 2d-1+\epsilon$.
\end{lem}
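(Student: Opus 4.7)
Since $\mathcal{K}(\epsilon,\rho^*,\mathcal W)\leq \mathcal{N}(\epsilon/2,\rho^*,\mathcal W)$, my plan is to bound instead the covering number of $\mathcal W := \{S \in \mathcal{S}^*: |S|\leq\delta\}$ at radius $(\delta u)^{1/2}/2$. I would parametrize each hyperrectangle as $S = [s,s+h]$ via its lower corner $s \in [0,1]^d$ and its vector of side lengths $h \in [0,1]^d$, subject to $\prod_i h_i \leq \delta$ and $s_i + h_i \leq 1$, and discretize both $s$ and $h$ on a common grid of spacing $\eta = c\,\delta u$, with $c>0$ a small dimensional constant to be fixed.

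First I would establish a symmetric-difference estimate: if $\|s-\tilde s\|_\infty, \|h-\tilde h\|_\infty \leq \eta$, the standard slab decomposition of $S \bigtriangleup \widetilde S$ yields
\[
|S \bigtriangleup \widetilde S| \;\leq\; 2\sum_{i=1}^d \bigl(|s_i - \tilde s_i| + |h_i - \tilde h_i|\bigr)\prod_{j\neq i}(h_j + \eta) \;\leq\; C\, d\, \eta,
\]
where the last step uses only $h_j,\eta \leq 1$. Choosing $c$ so that $C d\eta \leq \delta u /4$ therefore ensures that the discretized rectangles constitute a $(\delta u)^{1/2}/2$-net in $\rho^*$.

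Next I would count this net. The number of admissible grid values for $s$ is at most $(2/\eta)^d$. For $h = \eta(k_1,\dots,k_d)$ with $k_i \in \mathbb Z_{\geq 1}$, the volume constraint becomes $\prod_i k_i \leq \delta/\eta^d =: D$. The classical hyperbolic lattice-point bound (proved by an easy induction on $d$ from $\sum_{k \leq D} 1/k \leq 1 + \log(D \vee e)$) gives
\[
\#\bigl\{(k_1,\dots,k_d) \in \mathbb Z_{\geq 1}^d : \textstyle\prod_i k_i \leq D\bigr\} \;\leq\; C_d\, D\, \bigl(\log(D \vee e)\bigr)^{d-1}.
\]
Multiplying the two counts and substituting $\eta \sim \delta u$ and $D \sim \delta^{1-d} u^{-d}$ produces the factor $\delta^{-(2d-1)} u^{-2d}$ up to a constant; a crude splitting $(\log(1/(\delta u)))^{d-1} \leq C_d\,(\log(1/\delta))^{d-1}(\log(1/u))^{d-1}$ (which holds once both logs are $\geq 1$, and is otherwise absorbed into the constant since the packing number is bounded there anyway) yields the first stated inequality. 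The second inequality is an immediate consequence of the elementary bound $(\log(1/x))^{d-1} \leq C_{d,\epsilon}\, x^{-\epsilon}$ for $x \in (0,1]$, applied to each logarithmic factor.

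The only mild obstacle is obtaining the exponent $2d-1$ rather than the naive $2d$: a direct uniform discretization of the corner pair $(s,t) \in [0,1]^{2d}$ with spacing $\eta \sim \delta u$ gives only $\eta^{-2d} \sim \delta^{-2d}u^{-2d}$. The gain of one factor of $\delta$ is exactly what the hyperbolic lattice-point count provides, since the volume constraint $\prod h_i \leq \delta$ restricts the side-length vectors to a hyperbolic region rather than a full cube; this is the essential reason why Assumption \ref{ass:v} is satisfied by any $v > 2d-1$ for $\mathcal S^*$.
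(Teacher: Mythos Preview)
Your proposal is correct and follows essentially the same route as the paper: a grid discretization of both corner and side-length vectors at spacing $\eta\sim\delta u$, a slab bound $|S\bigtriangleup S'|\lesssim d\eta$, and then the hyperbolic lattice-point count $\#\{(k_1,\dots,k_d):\prod k_i\le D\}\lesssim D(\log D)^{d-1}$ to recover the exponent $2d-1$. The only cosmetic difference is that the paper obtains the hyperbolic count by invoking Minkowski's theorem to pass to the continuous volume $\Delta_d(N)=\tfrac{1}{(d-1)!}N(\log N)^{d-1}$ and computes that by induction, whereas you state the lattice bound directly via the same induction; the substance is identical.
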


\begin{remark}
	Note that it can even be shown that there are $k_1, k_2 > 0$ such that \eqref{eq:capac_bound} with additional powers of $\left(-\log\left(\delta\right)\right)$ on the right-hand side is satisfied with $v = 1$, see e.g. Theorem 1 in \citet{Walther} or Lemma 2.1 in \cite{ds18}.
\end{remark}

\begin{proof}[Proof of Lemma \ref{lem:packing_hyperrectangles}]
We approximate the hyper-rectangles in $\mathcal W = \{S \in \mathcal{S}^*: |S|\leq \delta\}$ by hyper-rectangles with vertices in the lattice $\mathbb{L}_m:= \left\lbrace \frac{i}{m} ~\big|~ i=0, \ldots, m\right\rbrace^d$ where $m$ has to be specified later. The set of all hyper-rectangles with such vertices and size $\leq \delta$ will be denoted by $\mathcal W'_m$. For $S \in \mathcal W$ denote by $k_1, \ldots, k_d$ the edge lengths. Then $\prod_{j=1}^d k_j  \leq \delta$ and $k_i \leq 1$. It is immediately clear that there exists an approximating hyper-rectangle $S' \in \mathcal W'_m$ such that
\begin{align}
\left(\rho^*\left(S,S'\right)\right)^2 &= \left|S \bigtriangleup S'\right|\nonumber\\
&\leq 2 \left( k_2 \cdot ... \cdot k_d +k_1 \cdot k_3 \cdot ...\cdot k_d +  \ldots + k_1 \cdot \ldots \cdot k_{d-1}\right) \cdot \frac{1}{2m}\nonumber\\
&\leq \frac{d}{m}.\label{eq:covering_aux1}
\end{align}
Hence, we obtain $\rho^*\left(S,S'\right) \leq \left(\delta u \right)^{1/2}$ if we choose $m: =  \frac{d}{\delta u}$. Now we have to compute the cardinality of $\mathcal W'_{d/(\delta u)}$. First note that the number of possible left bottom vertices is bounded from above by $m^d = \# \mathbb L_m$. If we denote the edge lengths of $S' \in \mathcal W'_m$ by $l_1, \ldots, l_d$, we can find integers $i_1, ..., i_d$ such that $l_j = \frac{i_j}{m}$ and $\prod_{j=1}^d i_j \leq \delta m^d =:N$. Therefore, we obtain
\begin{equation}\label{eq:covering_number_S}
\mathcal N \left((\delta u)^{1/2}, \rho^*, \{S \in \mathcal{S}^*: |S|\leq \delta\}\right) \leq \# \mathcal W'_{m} \leq m^d \cdot \# \mathcal{C}_N
\end{equation}
with $\mathcal C_N := \left\{(i_1, \ldots, i_d) \in \N^d ~\big|~\prod_{j=1}^d i_j \leq N\right\}$. 
To compute $\# \mathcal C_N$, we employ Minkowski's theorem \citep[cf.][Sec. III.2.2]{c97}, which ensures that the Lebesgue volume $\Delta_d(N)$ of \linebreak
$\left\{(x_1, \ldots, x_d) \in [1, N]^d ~\big|~x_1 \cdot \ldots \cdot x_d \leq N \right\}$ is comparable with $\#\mathcal C_N$ up to a factor of $2^d$. 
{We show by induction that
\begin{align*}
\Delta_d(N)  = \frac{1}{(d-1)!} N \left(\log N \right)^{d-1}.
\end{align*} 
\begin{proof}
 \textsf{$d=1$:}\quad
  $\Delta_1= N.$\\
 \textsf{$d \mapsto d+1:$}\quad
  $x_1 \cdot \ldots \cdot x_{d+1} \leq N \Leftrightarrow x_1 \cdot \ldots \cdot x_d \leq \frac{N}{x_{d+1}},$ where $x_{d+1}\in [1, N].$ Hence,
 \begin{align*}
 \Delta_{d+1}(N) &= \int_1^N \Delta_d \left(\frac{N}{x_{d+1}} \right) dx_{d+1}\\
 &= \frac{1}{(d-1)!} \int_1^N \frac{N}{x_{d+1}} \left(\log \left( \frac{N}{x_{d+1}}\right) \right)^{d-1} dx_{d+1}\\
 &= \frac{N}{(d-1)!} \left(- \int_N^1 y(\log y)^{d-1} \frac{1}{y^2} dy\right)\\
 &= \frac{N}{(d-1)!} \int_1^N \left(\log y \right)^{d-1} \frac{1}{y} dy\\
 &= \frac{1}{d!} N (\log N)^d. 
 \end{align*}  
\end{proof}
}
Inserting this into \eqref{eq:covering_number_S}, we obtain
\begin{align*}
&\mathcal N \left((\delta u)^{1/2}, \rho^*, \{S \in \mathcal{S}^*: |S|\leq \delta\}\right) \\
\leq& m^d * \# \mathcal{P}_N \\
\leq& 2^d m^d \Delta_d\left(\delta m^d\right) \\
\lesssim& \delta m^{2d} {\left[ \log \left(\delta m^d\right)\right]}^{d-1}\\
=&\delta^{-(2d-1)} u^{-2d} \left[\log \left( d^d \delta^{-(d-1)} u^{-d}\right) \right]^{d-1}\\
\lesssim&\delta^{-(2d-1)} \left( \log (1/\delta)\right)^{d-1} u^{-2d} \left( \log (1/u)\right)^{d-1},
\end{align*}
where we used $(x+y)^{d-1} \leq c x^{d-1} y^{d-1}$ for $x,y\geq 1$. This proves the claim.
\end{proof} 
 
\begin{lem}\label{lem:packing_hypercubes}
There exists a constant $C$ depending only on the dimension $d$ such that for all $u, \delta \in \left(0,1\right]$ it holds
\[
\mathcal{K}\left((\delta u)^{1/2}, \rho^*, \{Q \in \mathcal{Q}^*: |Q|\leq \delta\}\right)\leq C \delta^{-1}u^{-(d+1)},
\]
i.e. \eqref{eq:capacity_bound} holds true with $k_1 = C, k_2 = d+1$ and $V_{\mathcal Q^*} = 1$.
\end{lem}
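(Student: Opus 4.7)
The plan is to mimic the argument of Lemma~A.1 (for hyperrectangles), exploiting the extra rigidity that a hypercube is determined by only $d+1$ real parameters (its lower-left corner $t \in [0,1]^d$ and its side length $h$) instead of $2d$. Since $\mathcal{K}(\varepsilon, \rho^*, \mathcal{W}) \leq \mathcal{N}(\varepsilon/2, \rho^*, \mathcal{W})$, it suffices to establish the same upper bound for the covering number, because the resulting factor of $2^{d+1}$ will be absorbed into the constant $C$.

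Fix a resolution $m \in \mathbb{N}$ (to be chosen) and consider the lattice $\mathbb{L}_m = \{i/m : 0 \leq i \leq m\}^d$. Given $Q = [t, t+h] \in \mathcal{Q}^*$ with $h^d = |Q| \leq \delta$, approximate $t$ coordinatewise by the closest lattice point $t' \in \mathbb{L}_m$ and approximate $h$ by the closest multiple $h'$ of $1/m$ with $h' \leq \delta^{1/d}$, yielding $Q' = [t', t'+h']$. Each of the $d$ face-shifts contributes at most $h^{d-1}/m$ to $|Q \bigtriangleup Q'|$, and the adjustment of the side length adds at most $d h^{d-1}/(2m)$, so that
\[
\bigl(\rho^*(Q, Q')\bigr)^2 = |Q \bigtriangleup Q'| \leq \frac{C_d \, h^{d-1}}{m} \leq \frac{C_d \, \delta^{(d-1)/d}}{m}
\]
for some constant $C_d$ depending only on $d$. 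Forcing the right-hand side to be $\leq \delta u$ prescribes the choice $m := \lceil C_d / (\delta^{1/d} u) \rceil$.

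Now count the admissible lattice cubes. The corner $t'$ lies in $\mathbb{L}_m$, giving at most $(m+1)^d \leq (2m)^d$ choices. The side length $h' = j/m$ must satisfy $j \leq m \delta^{1/d}$, contributing at most $m \delta^{1/d}$ further options. Hence
\[
\mathcal{N}\bigl((\delta u)^{1/2}, \rho^*, \{Q \in \mathcal{Q}^*: |Q| \leq \delta\}\bigr) \leq (2m)^d \cdot m \delta^{1/d} \leq C \, m^{d+1} \delta^{1/d} \leq C' \, \delta^{-1} u^{-(d+1)},
\]
where the last step substitutes our choice of $m$ and observes that $m^{d+1} \delta^{1/d} \asymp \delta^{-(d+1)/d + 1/d} u^{-(d+1)} = \delta^{-1} u^{-(d+1)}$. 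This yields the claim with $V_{\mathcal{Q}^*} = 1$.

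No step is genuinely difficult here; the only thing that requires mild care is the bookkeeping on the symmetric difference $|Q \bigtriangleup Q'|$, where one must separately control the shift of the corner and the change of the side length and check that both are $O(h^{d-1}/m)$ rather than $O(h^d/m)$ (which would cost an extra factor and ruin the exponent). The crucial cancellation is that $\delta^{1/d}$ appearing in the count of admissible side lengths combines with $m^{d+1} \propto \delta^{-(d+1)/d}$ to produce exactly $\delta^{-1}$, independently of $d$.
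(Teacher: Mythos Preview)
Your proof is correct and follows essentially the same approach as the paper: lattice approximation on $\mathbb{L}_m$, the symmetric-difference bound $|Q\bigtriangleup Q'| \lesssim \delta^{(d-1)/d}/m$, the choice $m \asymp (\delta^{1/d} u)^{-1}$, and the count $m^d \cdot m\delta^{1/d}$ for the lattice cubes. Your exposition is in fact slightly more explicit about separating the corner-shift and side-length contributions to the symmetric difference, but the argument is the same.
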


\begin{proof}
We proceed as in the Proof of Lemma \ref{lem:packing_hyperrectangles}. In contrast to hyper-rectangles, we obtain here instead of \eqref{eq:covering_aux1} the better estimate 
\begin{align*}
\left(\rho^*\left(Q,Q'\right)\right)^2 = \left|Q \bigtriangleup Q'\right| \leq \frac{d \delta^{\frac{d-1}{d}}}{m}.
\end{align*}
as all edges have the same length, i.e. we can choose $m: =  \frac{d}{\delta^{1/d} u}$. Furthermore, the cardinality of $\mathcal W'_m$ is bounded by the number of lower left vertices times the number of possibilities for an adjacent vertex, which gives
\[
\# \mathcal W'_m \leq m^{d} \cdot \left(\delta^{1/d}m\right)=m^{d+1}\delta^{1/d}.
\]
Therefore we finally obtain
\begin{align*}
\mathcal N \left((\delta u)^{1/2}, \rho^*, \{S \in \mathcal{S}^*: |S|\leq \delta\}\right) &\leq \# \mathcal W'_{d/(\delta^{1/d} u)} \\
& \leq \left(\frac{d}{\delta^{1/d} u}\right)^{d+1}\delta^{1/d} = d^{d+1} u^{-(d+1)} \delta^{-1},
\end{align*}
which proves the claim.
\end{proof}

\begin{lem}\label{lem:packing_halfspaces}
There exists a constant $C$ depending only on the dimension $d$ such that for all $u, \delta \in \left(0,1\right]$ it holds
\[
\mathcal{K}\left((\delta u)^{1/2}, \rho^*, \{H \in \mathcal{H}^*: |H|\leq \delta\}\right)\leq C \delta^{-2}u^{-2},
\]
i.e. \eqref{eq:capacity_bound} holds true with $k_1 = C, k_2 = 2$ and $V_{\mathcal H^*} = 2$.
\end{lem}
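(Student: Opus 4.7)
My plan is to adapt the strategy of Lemmas~\ref{lem:packing_hyperrectangles} and \ref{lem:packing_hypercubes}: construct an explicit $\sqrt{\delta u}$-net for $\mathcal W:=\{H\in\mathcal H^*:|H|\le\delta\}$ in the metric $\rho^*$ by discretising the parameters $(a,\alpha)\in\mathbb S^{d-1}\times\mathbb R$ of a half-space, and then pass to a packing bound via $\mathcal K(\varepsilon,\rho^*,\mathcal W)\le\mathcal N(\varepsilon/2,\rho^*,\mathcal W)$.

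The first step is a Lipschitz estimate for $\rho^*$ in these parameters. Since any $x\in H_{a,\alpha}\bigtriangleup H_{a',\alpha'}$ satisfies $|\langle x,a\rangle-\alpha|\le|\alpha-\alpha'|+\sqrt d\,\|a-a'\|$, the symmetric difference is contained in a slab of width $O(\|a-a'\|+|\alpha-\alpha'|)$ about the bounding hyperplane $\{\langle\cdot,a\rangle=\alpha\}$. Combining this with a uniform bound on the $(d-1)$-volume of hyperplane sections of $[0,1]^d$ (for instance Ball's cube-slicing inequality, $\le\sqrt 2$) yields
\[
|H_{a,\alpha}\bigtriangleup H_{a',\alpha'}|\le C_d\bigl(\|a-a'\|+|\alpha-\alpha'|\bigr).
\]

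The second step uses $|H|\le\delta$ to restrict the offset: writing $\alpha_{\max}(a):=\sum_{i=1}^d(a_i)_+$ for the support functional of $[0,1]^d$ in direction $a$, the constraint forces $\alpha\in[\alpha_{\max}(a)-\tau_a(\delta),\alpha_{\max}(a)]$, where the depth $\tau_a(\delta)$ is controlled by an explicit volume computation of the corner cap cut off by the hyperplane. One then takes an $\varepsilon_a$-net in $\mathbb S^{d-1}$ of cardinality $\le C_d\varepsilon_a^{-(d-1)}$ and, for each selected $a$, an $\varepsilon_\alpha$-net in the admissible offset interval of cardinality $\le C_d\tau_a(\delta)/\varepsilon_\alpha$. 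Choosing $\varepsilon_a,\varepsilon_\alpha\asymp\delta u$ ensures by the Lipschitz estimate that every ball in the cover has $\rho^*$-radius at most $\sqrt{\delta u}$, and the product of the two net cardinalities, after absorbing polynomial factors of $\delta,u\in(0,1]$ into a $d$-dependent constant $C$, is bounded by $C\delta^{-2}u^{-2}$.

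The main obstacle is obtaining the clean $d$-independent exponents $v=k_2=2$ in the final counting step. The admissible depth $\tau_a(\delta)$ is sharply direction-dependent (growing like $\delta$ for axis-aligned $a$ but like $\delta^{1/d}$ for the diagonal direction), and a naive product count yields only $\delta^{-(d-1/d)}u^{-d}$, which absorbs into $C\delta^{-2}u^{-2}$ directly only for $d=2$. For $d\ge 3$ the argument must be sharpened, most likely by a dyadic decomposition on the scale of $|H|$ combined with a volume-aware symmetric-difference estimate of the form $|H_{a,t}\bigtriangleup H_{a',t}|\lesssim t\,\|a-a'\|$ at fixed volume $t$ (proved via a boundary-slab computation where the $(d-1)$-area of the boundary of $H_{a,t}\cap[0,1]^d$ is used in place of the crude bound $\sqrt 2$), so that the separate contributions from each dyadic scale of $|H|$ telescope to the desired $(\delta u)^{-2}$ bound.
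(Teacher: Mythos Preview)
Your proposal identifies the right ingredients---parametrise by $(a,\alpha)$, discretise, bound the symmetric difference---but stops short of a complete argument: you explicitly concede that the naive product count yields the claimed exponents only for $d=2$, and the sketched repair via a dyadic decomposition in $|H|$ together with a ``volume-aware'' rotation bound $|H_{a,t}\bigtriangleup H_{a',t}|\lesssim t\,\|a-a'\|$ is neither proved nor carried through. That is a genuine gap.

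The paper's argument is structurally different and avoids this difficulty. It does \emph{not} exploit the constraint $|H|\le\delta$ to shrink the admissible offset range; instead it covers the full interval $\alpha\in[0,\sqrt d]$ with $m\asymp(\delta u)^{-1}$ equidistant points. The crucial move is to take a much \emph{coarser} angular mesh on the sphere: a maximal system $a_1,\dots,a_N\in\mathbb S^{d-1}$ with pairwise angular separation $\ge(1/m)^{1/(d-1)}$. Since a spherical cap of angular radius $\theta$ has $(d-1)$-volume $\asymp\theta^{d-1}$, a disjoint-packing/covering argument gives $N\asymp m$ rather than $m^{d-1}$. The paper then bounds $|H_{a,\alpha}\bigtriangleup H_{a_i,\alpha}|$ geometrically---viewing it as a union of hyperpyramids of opening angle $\le(1/m)^{1/(d-1)}$---by $C/m$, and combines this with the slab bound for the $\alpha$-perturbation to obtain $\rho^*(H_{a,\alpha},H_{a_i,\alpha_j})^2\le C/m\le\delta u$. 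The net cardinality is then $N\cdot m\asymp m^2\asymp(\delta u)^{-2}$, delivering the $d$-independent exponents $v=k_2=2$ directly. Your Lipschitz estimate $|H_{a,\alpha}\bigtriangleup H_{a',\alpha}|\le C_d\|a-a'\|$ is correct but too crude for this purpose: it forces the fine angular mesh $\varepsilon_a\asymp\delta u$ and hence the $(\delta u)^{-(d-1)}$ direction count that you cannot compensate.
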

\begin{proof}
{For two points $a_1, a_2 \in\mathbb S^{d-1}$ we denote by $\sphericalangle \left(a_1,a_2\right) := \arccos \left(\left\langle a_1, a_2\right\rangle\right) \in \left[0,\pi\right]$ the spherical angle between $a_1, a_2$. Now l}et $\mathcal W_{N,m} = \left\{H_{a_i,\alpha_j} ~\big|~ i=1,...,N, j=1,...,m\right\}$ with numbers $a_1,...,a_N \in \mathbb S^{d-1}$ and $\alpha_1,...,\alpha_m \in \left[0,\sqrt{d}\right]$. Note that $H_{a,\alpha} = \emptyset$ for $\alpha > \sqrt{d}$ by definition and Pythagoras' theorem. It is convenient to choose $\alpha_1, ..., \alpha_m$ as equidistant, e.g.
\[
\alpha_i := \frac{i-\frac{1}{2}}{m} \sqrt{d}, \quad i=1, \ldots, m.
\]
Furthermore we choose $a_1,...,a_N$ as a maximal system of points in $\mathbb S^{d-1}$ such that $\sphericalangle (a_j, a_k) \geq \left( \frac{1}{m}\right)^{\frac{1}{d-1}}$ for all $j \neq k$. This implies that
\[
\mathbb{S}^{d-1} \subset \bigcup\limits_{j=1}^N S_{a_j}\left( \left(\frac{1}{m}\right)^{\frac{1}{d-1}} \right)
\]
with the spherical cap $S_{a} \left(\theta_0\right) = \left\{e \in \mathbb S^{d-1} ~\big|~ \sphericalangle (a, e) \leq \theta_0\right\}$. Note that
\[
\left|S_a \left(\theta_0\right)\right| \sim \frac{\int\limits_0^{\theta_0} \left( \sin t \right)^{d-2} \,\mathrm dt}{\int\limits_0^\pi \left( \sin t\right)^{d-2} \,\mathrm dt} \sim \theta_0^{d-1} 
\]
for small values of $\theta_0$. Now, for any given $a \in \mathbb{S}^{d-1}$ and $\alpha \in \left[0, \sqrt{d}\right]$, we can find $1 \leq i \leq N$ and $1 \leq j \leq m$ such that
\begin{align*}
{\sphericalangle (a, a_i)} \leq \left( \frac{1}{m}\right)^{\frac{1}{d-1}}, \qquad \left|\alpha-\alpha_j \right| \leq \frac{\sqrt{d}}{m}.
\end{align*}
Now we split
\begin{align*}
\left(\rho^* \left(H_{a,\alpha}, H_{a_i,\alpha_j}\right)\right)^2 
& \leq \left|H_{a, \alpha}\bigtriangleup H_{a_i, \alpha} \right| + \left|H_{a_i, \alpha}\bigtriangleup H_{a_i, \alpha_j} \right|
\end{align*}
and since $H_{a_i, \alpha}\bigtriangleup H_{a_i, \alpha_j} $ is a $d-1$-dimensional space of width $\leq \frac{\sqrt{d}}{m}$ and $H_{a, \alpha}\bigtriangleup H_{a_i, \alpha}$ is a union of hyperpyramids with opening angle $\leq \left( \frac{1}{m}\right)^{\frac{1}{d-1}}$, we obtain
\[
\left(\rho^* \left(H_{a,\alpha}, H_{a_i,\alpha_j}\right)\right)^2 \leq \frac{C}{m}
\]
where $C$ is some generic constant depending only on $d$. Hence if we choose $m = C^{-1}\delta^{-1} u^{-1}$, then for each $H \in \{H \in \mathcal{H}^*: |H|\leq \delta\}$ there exists $H' \in \mathcal W_{N,m}$ such that $\rho^*\left(S,S'\right) \leq \left(\delta u \right)^{1/2}$. Now we have to estimate $N$. By elementary geometry it follows that
\[
\bigcup\limits_{j=1}^N S_{a_j}\left(\frac{1}{2}\left(\frac{1}{m}\right)^{\frac{1}{d-1}} \right) \subset \mathbb{S}^{d-1} \subset \bigcup\limits_{j=1}^N S_{a_j}\left( \left(\frac{1}{m}\right)^{\frac{1}{d-1}} \right),
\]
and furthermore up to boundary points, the sets on the left-hand side are disjoint. Therefore we obtain for the volumes that
\[
N \left|S_{a_j}\left(\frac{1}{2}\left(\frac{1}{m}\right)^{\frac{1}{d-1}} \right) \right|\leq \left|\mathbb{S}^{d-1}\right| \leq N \left|S_{a_j}\left( \left(\frac{1}{m}\right)^{\frac{1}{d-1}} \right)\right|
\]
which implies $N \sim m$. Consequently, $\# \mathcal W_{N,m} \sim m^2$ which proves the claim.

\end{proof}

\section*{Acknowledgements}

Financial support by the German Research Foundation DFG of CRC 755 A04 is acknowledged. We thank Katharina Proksch for helpful comments on the proof of Theorem \ref{thm:approximation} as well as Guenther Walther and three anonymous referees for several constructive comments which helped us to improve the presentation of the paper substantially.

\bibliography{mybib}
\bibliographystyle{apalike}

\end{document}